\documentclass[11pt, a4paper]{amsart}

\usepackage[ansinew]{inputenc}
\usepackage[all]{xy}
\SelectTips{cm}{}
\usepackage[pagebackref,%colorlinks,linkcolor=black,citecolor=black%
  ,pdfborder={0 0 0}
  ,urlcolor=black,a4paper,hypertexnames=false]{hyperref}
\hypersetup{pdfauthor={Pietro Capovilla, Clara L\"oh, Marco Moraschini}
  ,pdftitle=Amenable category and complexity}

\usepackage{amsfonts}
\usepackage{amssymb, amsmath,amsthm, mathtools}
\usepackage{tabularx, hyperref}
\usepackage{enumerate}

\usepackage{tikz-cd}

%%%%%%%%%%%% command for restating a theorem

\makeatletter
\newtheorem*{rep@theorem}{\rep@title}
\newcommand{\newreptheorem}[2]{%
\newenvironment{rep#1}[1]{%
 \def\rep@title{#2 \ref{##1}}%
 \begin{rep@theorem}}%
 {\end{rep@theorem}}}
\makeatother

%%%%%%%%% 

\newtheorem{intro_thm}{Theorem}

\newtheorem{intro_prop}[intro_thm]{Proposition}
\newtheorem{intro_quest}[intro_thm]{Question}

%%%%%%%%%%%%%%%%%

\newtheorem{lemma}{Lemma}[section]
\newtheorem{thm}[lemma]{Theorem}
\newreptheorem{thm}{Theorem}  %for repeating the theorem
\newtheorem{prop}[lemma]{Proposition}
\newreptheorem{prop}{Proposition}  %for repeating the proposition
\newtheorem{cor}[lemma]{Corollary}
\newreptheorem{cor}{Corollary}  %for repeating the corollary

\theoremstyle{definition}
\newtheorem{defi}[lemma]{Definition}

\newtheorem{quest}[lemma]{Question}
\newreptheorem{quest}{Question}  %for repeating the question

\newtheorem{example}[lemma]{Example}
\newtheorem{rem}[lemma]{Remark}

\theoremstyle{definition}

\makeatletter
\newcommand\norm{\bBigg@{0.8}}

\makeatother
 
 \newcommand{\indnorm}[2][flex]{\csname #1l\endcsname\|#2%
                                 \csname #1r\endcsname\|\mathclose{}}
                                  \newcommand{\indnorml}[4][flex]{\csname #1l\endcsname\|#2%
                                 \csname #1r\endcsname\|_{#3}^{#4}\mathclose{}}
\newcommand{\sv}[2][flex]{\indnorm[#1]{#2}}

\newcommand{\connsum}{\mathbin{\#}}

\DeclareMathOperator{\catop}{cat}
\DeclareMathOperator{\LS}{LS}
\DeclareMathOperator{\Am}{Am}
\DeclareMathOperator{\amcat}{\catop_{\Am}}
\DeclareMathOperator{\lscat}{\catop_{\LS}}
\DeclareMathOperator{\gd}{\textup{gd}}

\DeclareMathOperator{\gcat}{\catop_{\mathcal{G}}}
\DeclareMathOperator{\efcat}{\mbox{efficient $\mathcal{G}$-category}}
\DeclareMathOperator{\lex}{\textup{Lex}}

\DeclareMathOperator{\st}{\textup{st}}
\DeclareMathOperator{\PD}{\textup{PD}}
\DeclareMathOperator{\cd}{cd}

\DeclareMathOperator{\TC}{TC}
\DeclareMathOperator{\comp}{comp}

\DeclareMathOperator{\im}{im}

\DeclareMathOperator{\id}{id}

\newcommand{\fa}[1]{%
  \forall_{#1}\quad}

\newcommand{\qand}{%
  \qquad\text{and}\qquad}

\newcommand{\N}{\ensuremath {\mathbb{N}}}
\newcommand{\R} {\ensuremath {\mathbb{R}}}

\newcommand{\Z} {\ensuremath {\mathbb{Z}}}

\renewcommand{\rho}{\varrho}
\def\phi{\varphi}

\def\geqone{\mathbin{\geq_1}}

\def\args{\;\cdot\;}

\def\connsum{\mathbin{\#}}

% comment commands
% comments
\usepackage{color}
\usepackage{pdfcolmk}

% 2do! undef!
\long\def\forget#1{}

% for consistency:
\def\longrightarrow{\rightarrow}

\def\widetilde{\tilde}

% fix MSC 2020
\makeatletter
\@namedef{subjclassname@2020}{%
  \textup{2020} Mathematics Subject Classification}
\makeatother

%%%%%%%%%%%%%%%%%%%%%%%%%%%%%%%%%%%%%%%%%%%%%%%%%%%%%%%%%%%%%%%%%
\begin{document}

\title{Amenable category and complexity}

\author[]{Pietro Capovilla}
\address{Fakult\"at f\"ur Mathematik, Universit\"at Regensburg, Regensburg, Germany}
\email{ilcapovilla@gmail.com}

\author[]{Clara L\"{o}h}
\address{Fakult\"{a}t f\"{u}r Mathematik, Universit\"{a}t Regensburg, Regensburg, Germany}
\email{clara.loeh@ur.de}

\author[]{Marco Moraschini}
\address{Fakult\"{a}t f\"{u}r Mathematik, Universit\"{a}t Regensburg, Regensburg, Germany}
\email{marco.moraschini@ur.de}

\thanks{}

\keywords{amenable category, topological complexity, bounded cohomology,
  classifying spaces of families of subgroups}
\subjclass[2020]{18G90, 55N10}
%55N10          Singular theory
%18G90          Other co(ho)mological theories
\date{\today.\ %\copyright{\ P.~Capovilla, C.~L\"oh, M.~Moraschini}.
  This work was supported by the CRC~1085 \emph{Higher Invariants}
  (Universit\"at Regensburg, funded by the~DFG)}

\begin{abstract}
  Amenable category is a variant of the Lusternik-Schnirelman category,
  based on covers by amenable open subsets. We study the monotonicity
  problem for degree-one maps and amenable category and the relation 
  between amenable category and topological complexity. 
\end{abstract}

\maketitle

%%%%%%%%%%%%%%%%%%%%%%%%%%%%%%%%%%%%%%%%%%%%%%%%%%%%%%%%%%%
\section{Introduction}

% context of various coverage problems
% quick def of amenable category

In applied algebraic topology, various integer-valued invariants
associated with covers are considered to model, e.g., sensor coverage
problems~\cite{DeSilva:Ghrist} or motion planning problems~\cite{Farber:top:compl:of:motion:planning}.
Prototypical invariants of this type are the Lusternik-Schnirelman
category~$\lscat$ and the topological complexity~$\TC$, as introduced
by Farber~\cite{Farber:top:compl:of:motion:planning}: The Lusternik-Schnirelman category~$\lscat(X)$
of a topological space~$X$ is the minimal number of open and in~$X$
contractible sets needed to cover~$X$; the topological
complexity~$\TC(X)$ is the minimal number of open sets that cover~$X
\times X$ and such that over each member of the cover the path
fibration~$PX \longrightarrow X \times X$ (Section~\ref{subsec:tc})
admits a continuous section.

Relaxing the contractibility condition in the definition of~$\lscat$
to conditions on the allowed images on the level of~$\pi_1$ leads to
generalised notions of category, e.g., to the amenable
category~$\amcat$~\cite{GGH:am} (Section~\ref{subsec:cat}).
The class of amenable groups allows for much bigger flexibility
in the covers, such as using ``circle-shaped'' sets, but is still a class
of groups that is often considered as ``small'' or ``negligible'' in
the context of large-scale topology. 

In the present article, we will focus on amenable category and its
relation to topological complexity. Our leading questions are:

%\begin{quest}[monotonicity problem; generalised version of Rudyak's
  %  question~{\cite{Rudyak:question}\cite[Open Problem~2.48]{CLOT}}]\label{q:ammono}
  \begin{intro_quest}[Question~\ref{q:ammono:new}]\label{q:ammono}
  Does the following hold for all oriented closed connected
  manifolds~$M$ and~$N$:
  \[ M \geqone N \Longrightarrow \amcat M \geq \amcat N \text{\;?}
  \]
  Here, we write~$M \geqone N$ if $\dim M = \dim N$ and there
  exists a continuous map~$M\longrightarrow N$ of degree~$\pm 1$.
\end{intro_quest}

\begin{intro_quest}[Question~\ref{q:amtc:new}]\label{q:amtc}
  For which topological spaces~$X$ do we have
  \[ \amcat(X \times X) \leq \TC(X) \text{\;?}
  \]
\end{intro_quest} 

As main tools we will use bounded cohomology and classifying spaces
of families of subgroups.

%%%%%%
\subsection{Small amenable category and the fundamental group}

It is known that closed connected manifolds~$M$ with~$\lscat M = 3$
and $\dim M > 2$ have free fundamental
group~\cite[Theorem~1.1]{DKR}. In contrast, for each finitely
presented group~$\Gamma$ there exists an oriented closed connected
$5$-manifold~$M$ with~$\pi_1(M) \cong \Gamma$ and $\lscat M =
4$~\cite[Theorem~1.3]{DKR}. For~$\amcat$ we obtain an analogous picture;
as amenable subsets are a richer class of sets than contractible
(within the ambient space) sets, a shift by~$1$ occurs:

\begin{intro_thm}[small values of~$\amcat$;
    Proposition~\ref{prop:amcat3}, Corollary~\ref{cor:amcat2}]\label{thm:smallamcat}
  Let $\Gamma$ be a finitely presented group.
  \begin{enumerate}
  \item If $n \in \N_{\geq 4}$, then there exists an oriented closed
    connected $n$-man\-i\-fold~$M$ with~$\pi_1(M) \cong \Gamma$ and
    $\amcat M \leq 3$.
  \item If $\Gamma$ is non-amenable, then the following are equivalent:
    \begin{enumerate}
    \item The group~$\Gamma$ is the fundamental group of a graph of
      groups whose vertex (and edge) groups are all amenable.
    \item If $X$ is a CW-complex with~$\pi_1(X) \cong \Gamma$,
      then $\amcat X = 2$.
    \item There exists an oriented closed connected manifold~$M$ that
      satisfies~$\pi_1(M) \cong \Gamma$ and $\amcat M = 2$.
    \end{enumerate}
  \end{enumerate}
\end{intro_thm}

Combining the second part of Theorem~\ref{thm:smallamcat} with known
results on Serre's property~FA, bounded cohomology or $L^2$-Betti
numbers leads to many examples of closed manifolds with amenable
category bigger than~$2$ (Corollary~\ref{cor:amcatnot2}).

%%%%%%
\subsection{The monotonicity problem for amenable category}

An interesting open problem about Lusternik-Schnirelmann
category is to understand its behaviour under degree-one maps. More
precisely, Rudyak asked the following
question~\cite{Rudyak:question}:

\begin{quest}[monotonicity problem {\cite[Open Problem~2.48]{CLOT}}]\label{quest:rudyak}
  Does the following hold for all oriented closed connected
  manifolds~$M$ and~$N$:
  \[ M \geqone N \Longrightarrow \lscat M \geq \lscat N \text{\;?}
  \]
\end{quest}

The main motivation behind the previous question is the fact that in
general the domain manifold is ``bigger'' than the target
manifold (Remark~\ref{rem:degree1maps:epi:mono},
Proposition~\ref{prop:degree:sv}). 
Rudyak's monotonicity problem for~$\lscat$ is wide open in full
generality. Several partial -- positive -- results are
known~\cite{Rudyak:question, Rudyak:maps:new, DS:connected1,
  DS:rudyak:20,Dranishnikov:Iwase,rudyaksarkar}. We provide several example classes
in which the corresponding question for~$\amcat$ (Question~\ref{q:ammono}) has a
positive answer: It is straightforward to show that oriented closed
connected surfaces and target manifolds with positive simplicial volume
have this property.
Combining previous computations of~$\amcat$ for $3$-manifolds
and known descriptions of the relation~$\geqone$ in dimension~$3$
shows that Question~\ref{q:ammono} has an affirmative answer in
dimension~$3$ (Theorem~\ref{thm:gen:R:quest:3:mflds}).

Moreover, we explain how bounded cohomology can be used to obtain
further positive results (Section~\ref{sec:degree:one}). An instance of this procedure is:

\begin{intro_thm}[Corollary~\ref{thm:gen:rudyak:conj:intro:new}]\label{thm:gen:rudyak:conj:intro}
  Let $\Gamma$ be the fundamental group of
  an oriented closed connected hyperbolic $k$-manifold of dimension~$k
  \in \{2,3\}$. Then, for
  every~$n \geq 2 k$ there exists an oriented closed connected
  $n$-manifold~$N$ with $\pi_1(N) \cong \Gamma$ such that: For all
  oriented closed connected $n$-manifolds~$M$ we have
  \[ M \geqone N \Longrightarrow \amcat M \geq \amcat N.\]
\end{intro_thm}

%%%%%%%%
\subsection{Amenable category and topological complexity}

The topological complexity~$\TC(X)$ is related to the category of~$X
\times X$ with respect to the diagonal family of subgroups
of~$\pi_1(X\times X)$~\cite{FGLO}. In general, we may not expect a direct
connection between this diagonal category and~$\amcat (X \times X)$.
However, it turns out that Question~\ref{q:amtc} has an affirmative
answer in many cases:

\begin{intro_thm}[Theorem~\ref{thm:examples:new}]\label{thm:examples}
  The following classes of spaces satisfy the estimate
  in Question~\ref{q:amtc}:
  \begin{enumerate}
  \item Spaces with amenable fundamental group;
  \item Spaces of type $B\Gamma$ where $\Gamma$ is a finitely
    generated geometrically finite hyperbolic group;
  \item Spaces of type $B\Gamma$ where $\Gamma=H*H$ is the free square
    of a geometrically finite group $H$;
  \item Manifolds whose fundamental group is the fundamental group of
    a graph of groups whose vertex (and edge) groups are all amenable.
  \end{enumerate}
\end{intro_thm}

\begin{intro_thm}[Corollary~\ref{thm:am:vs:tc:3:mflds:intro:new}]\label{thm:am:vs:tc:3:mflds:intro}
  Let $Y$ be an oriented closed connected $3$-manifold that is a
  connected sum of graph manifolds. Then, for every~$n \geq 6$, there
  exists an oriented closed connected $n$-manifold~$M$ with $\pi_1(M)
  \cong \pi_1(Y)$ and such that
  $$
  \amcat(M \times M) \leq \TC (M) \ .
  $$  
\end{intro_thm}

%%%%%
\subsection{Generalising the class of subgroups}

In fact, most of our methods and results also apply to classes of
groups that are contained in the class of amenable groups and that are
closed under isomorphisms, subgroups, and quotients. In the remainder
of the article, we will thus usually stick to the more general
setup. In particular, in analogy with the corresponding criterion for
the category of the trivial group by Eilenberg and
Ganea~\cite{eilenbergganea}, we will show how these generalised
category invariants can be expressed in terms of classifying spaces of
families of subgroups:

\begin{intro_prop}[Proposition~\ref{prop:introgencat:new}]%~\label{prop:cat:via:factorization:classifying:map}
  \label{prop:introgencat}
  Let $X$ be a connected CW-complex and let $\widetilde{X}$ be 
  its universal covering.  Let $\Gamma$ be the fundamental
  group of~$X$ and let $F$ be a subgroup family of~$\Gamma$.   
   Then,
  $\catop_F(X)-1$ coincides with the minimal
  integer~$k\in\mathbb{N}_{\geq 0}$ such that the classifying map
  \[
  f_{\widetilde X,\Gamma, F}\colon\widetilde X \rightarrow E_F\Gamma
  \]
  is $\Gamma$-homotopic to a map with values in the $k$-dimensional
  skeleton~$E_F\Gamma^{(k)}$ (of any model of~$E_F\Gamma$).
\end{intro_prop}

This leads to a corresponding lower bound of~$\catop_F$
in terms of Bredon cohomology (Corollary~\ref{cor:lower:bound:bredon}).
For example, this can be applied in the case of hyperbolic
fundamental groups (Example~\ref{exa:hypbredon}).

%%%%%
\subsection*{Acknowledgements}

We are grateful to Kevin Li for interesting discussions
and for pointing out Example~\ref{exa:hypbredon}.

We thank the anonymous referee for the careful 
reading of our manuscript and all the mathematical 
and stylistic suggestions to improve the exposition.

Finally, we would like to thank Dieter Kotschick for
his interest in our work. In particular, we are grateful
to him for pointing out an alternative proof of
Theorem~\ref{thm:gen:R:quest:3:mflds} (see
Remark~\ref{rem:3mfldsdirect}).

%%%%%%
\subsection*{Organisation of this article}

We recall basic notions on category and topological complexity
(Section~\ref{sec:cats}) as well as on bounded cohomology
(Section~\ref{sec:bc}). In Section~\ref{sec:cplxs:mflds}, we explain how to
turn category computations for complexes into manifold examples.
Section~\ref{sec:smallamcat} deals with the influence of the
fundamental group on small amenable category and contains a proof of
Theorem~\ref{thm:smallamcat}.  In Section~\ref{sec:degree:one}, we
discuss an extension of Rudyak's conjecture for degree one maps and
$\LS$-category and prove Theorem~\ref{thm:gen:rudyak:conj:intro}.  The
description of categorical invariants in terms of classifying spaces
is given in Section~\ref{sec:catclass}.  Finally,
Section~\ref{sec:examples} contains the proof of
Theorem~\ref{thm:examples} and of
Theorem~\ref{thm:am:vs:tc:3:mflds:intro}.

%%%%%%%%%%%%%%%%%%%%%%%%%%%%%%%%%%%%%%%%%%%%%%%%%%%%%%%%%%%
\section{Category and complexity}\label{sec:cats}

We briefly recall the notions of category and topological complexity.

%%%%%%%%
\subsection{Category}\label{subsec:cat}

We will work with the following generalised version of the
LS-category~\cite{CP}.

\begin{defi}
	\label{defi:cat:spaces}
	Let $\mathcal{A}$ be a class of topological spaces that
        contains a non-empty space and let $X$ be a topological space.
        A subset $U\subseteq X$ is called
        \emph{$\mathcal{A}$-contractible in $X$} if the inclusion map
        $i\colon U \rightarrow X$ factors homotopically through a
        space in $\mathcal{A}$, i.e., there exists a space $A\in
        \mathcal{A}$ and maps $\alpha\colon U\rightarrow A$ and
        $\beta\colon A\rightarrow X$ such that $\beta \circ i$ is
        homotopic to $\alpha$.
	
	The \emph{$\mathcal{A}$-category} of $X$, denoted by
        $\catop_{\mathcal{A}}(X)$, is the minimal~$n\in
        \mathbb{N}_{\geq 1}$ such that there exists an open cover
        $\{U_1,\dots,U_n \}$ of $X$ by 
        $\mathcal{A}$-contractible sets in $X$.
	%We denote this number $\catop_{\mathcal{A}}(X)$.
	If such an integer does not exist, we set $\catop_{\mathcal{A}}(X):=\infty$.
\end{defi}

\begin{example}
	\label{example:lscat}
	Let $\mathcal{A}=\{*\}$ be the class of spaces containing just
        the point space. Then $\catop_{\mathcal{A}}(X)$ coincides with
        the usual notion of $\LS$-category~$\lscat(X)$ of a
        topological space $X$~\cite{CLOT}.  Notice that we did not
        normalise the category, whence $\catop_{\mathcal{A}}(X) = 1$
        if and only if $X$ is contractible.
\end{example}

Moreover, there is also an algebraic version of
category~\cite{GGW}. For the rest of this paper, it will be
convenient to stick to the following convention:

\begin{defi}
  A class of groups is an \emph{isq-class} if it is non-empty and
  if it is closed under taking isomorphisms, subgroups, and quotients.
\end{defi}

\begin{defi}
  \label{defi:cat:groups}
  Let $\mathcal G$ be an isq-class of groups and let $X$ be
  a topological space.
	A subset
        $U\subseteq X$ is called \emph{$\mathcal{G}$-contractible in
          $X$} (or, simply, a $\mathcal{G}$-set) if for every $x\in U$
        we have
	\[
	\im \bigl(\pi_1(U\hookrightarrow X,x)\bigr)\in \mathcal{G}.
	\]
	
	The \emph{$\mathcal{G}$-category} of~$X$, denoted by~$\catop_{\mathcal{G}}(X)$,
        is then the minimal~$n\in
        \mathbb{N}_{\geq 1}$ such that there exists an open cover
        $\{U_1,\dots,U_n \}$ of~$X$ by $\mathcal{G}$-sets.  If such an
        integer does not exist, we set
        $\catop_{\mathcal{G}}(X):=\infty$.
\end{defi}

By specialising the previous definition to particular isq-classes of
groups, we get the definitions of $\pi_1$-category~\cite{Fox,
  GG:3mfld} and $\Am$-category~\cite{GGH:am}.

\begin{defi}\label{def:amcat}
  \item
	\begin{itemize}
		\item If $\mathcal{G}$ only contains the trivial
                  subgroup~$\{e\}$, then $\catop_{\pi_1} \coloneqq
                  \catop_{\{e\}}$ is the $\pi_1$-\emph{category}.
		\item If $\mathcal{G} = \Am$ is the family of amenable
                  groups, then $\catop_{\Am}$ is the
                  \emph{amenable category}.
		\end{itemize}
\end{defi}

There is a correspondence between the geometric $\mathcal{A}$-category
and the algebraic $\mathcal{G}$-category as follows.

\begin{prop}[{\cite[Proposition 1]{GGW}}]\label{prop:ag:equal:g}
  Let $\mathcal G$ be an isq-class of groups and let
        $\mathcal{A}_{\mathcal{G}}$ denote the class of all topological
        spaces~$Y$ with $\pi_1(Y,y)\in \mathcal{G}$ for every~$y\in
        Y$.  Then, we have
	\[
	\catop_{\mathcal{A}_\mathcal{G}} =\catop_{\mathcal{G}}.
	\]
\end{prop}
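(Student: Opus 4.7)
The plan is to prove the two inequalities $\catop_{\mathcal{A}_\mathcal{G}}(X) \geq \catop_\mathcal{G}(X)$ and $\catop_\mathcal{G}(X) \geq \catop_{\mathcal{A}_\mathcal{G}}(X)$ separately, in each case by converting a cover of one type into a cover of the other with the same number of members. The only property of $\mathcal{G}$ that I would use is that it is an isq-class.

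For $\catop_{\mathcal{A}_\mathcal{G}}(X) \geq \catop_\mathcal{G}(X)$, I would show that every $\mathcal{A}_\mathcal{G}$-contractible set $U \subseteq X$ is automatically a $\mathcal{G}$-set. Suppose the inclusion $i \colon U \hookrightarrow X$ factors up to homotopy as $\beta \circ \alpha$ with $\alpha \colon U \to A$, $\beta \colon A \to X$, and $A \in \mathcal{A}_\mathcal{G}$. For $x \in U$ the chosen homotopy traces a path in $X$ starting at $\beta(\alpha(x))$ and ending at $x$, and the standard change-of-basepoint identity identifies $\im \pi_1(i, x)$ with $\im (\beta_* \circ \alpha_*)$ up to conjugation in $\pi_1(X)$; in particular these two images are isomorphic as abstract groups. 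Now $\im (\beta_* \circ \alpha_*)$ is a subgroup of $\beta_*(\pi_1(A, \alpha(x)))$, which is a quotient of $\pi_1(A, \alpha(x)) \in \mathcal{G}$. Since $\mathcal{G}$ is closed under isomorphisms, subgroups, and quotients, $\im \pi_1(i, x) \in \mathcal{G}$.

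For the reverse inequality I would construct, for each $\mathcal{G}$-set $U$, an explicit factorising space. For each path-component $C$ of $U$ fix a basepoint $x_C$, set $N_C \coloneqq \ker(\pi_1(C, x_C) \to \pi_1(X, x_C))$, and choose loops $\{\lambda_j\}_{j \in J_C}$ in $C$ that normally generate $N_C$. By definition of $N_C$, each $\lambda_j$ admits a null-homotopy $h_j \colon D^2 \to X$. Let $\widehat{C}$ be the topological pushout obtained by attaching a $2$-cell to $C$ along each $\lambda_j$. Seifert-van Kampen gives $\pi_1(\widehat{C}, x_C) \cong \pi_1(C, x_C)/N_C \cong \im \pi_1(i, x_C) \in \mathcal{G}$, so $A_U \coloneqq \bigsqcup_C \widehat{C}$ lies in $\mathcal{A}_\mathcal{G}$. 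The universal property of the pushout combines the inclusion $C \hookrightarrow X$ with the null-homotopies $h_j$ into an extension $\widehat{C} \to X$; assembling over components yields $\alpha \colon U \to A_U$ (the canonical inclusion) and $\beta \colon A_U \to X$ satisfying $\beta \circ \alpha = i_U$ on the nose.

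The main obstacle is the second direction. Since $U$ is merely an open subspace of $X$ and need not carry any CW-structure, I would interpret the $2$-cell attachments as topological pushouts along the cofibration $S^1 \hookrightarrow D^2$ and invoke the pushout form of Seifert-van Kampen in this generality; this is legitimate because $C \hookrightarrow \widehat{C}$ is itself a cofibration and $D^2$ is simply connected. Granted this, the universal property of the pushout turns the null-homotopies $h_j$ produced by the very definition of $N_C$ into a globally continuous extension $\beta$, and the remaining verifications are routine.
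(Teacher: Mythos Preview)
The paper does not supply its own proof of this proposition; it simply cites \cite[Proposition~1]{GGW}. So there is no in-paper argument to compare against, and I can only comment on the correctness of your proposal.

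Your argument is sound. The first direction is the routine one and you handle the basepoint issue correctly: the homotopy $i \simeq \beta\circ\alpha$ yields a path from $\beta\alpha(x)$ to $x$, and conjugation along it identifies $\im\pi_1(i,x)$ with $\im(\beta_*\alpha_*)$, which is a subgroup of a quotient of $\pi_1(A,\alpha(x))\in\mathcal G$.

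For the second direction your cell-attachment construction is correct and in fact slightly more general than the other standard approach (lifting $C\hookrightarrow X$ to the covering of~$X$ associated with~$\im\pi_1(i,x_C)$), since yours does not require $X$ to admit covering spaces. Your concern about Seifert--van~Kampen is mild: the computation $\pi_1(C\cup_{\lambda_j} D^2)\cong \pi_1(C)/\langle\!\langle[\lambda_j]\rangle\!\rangle$ holds for arbitrary~$C$ because $S^1\hookrightarrow D^2$ is a cofibration and any map $S^1\to\widehat C$ has image meeting only finitely many attached discs (compactness), so the usual argument goes through even for infinitely many cells. Finally, your factorisation $\beta\circ\alpha = i_U$ holds strictly, which is stronger than what is needed.
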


The following result generalises a standard estimate for the LS-category.

\begin{prop}
  \label{prop:cat:products}
  Let $\mathcal G$ be an isq-class of groups and let $X$ and $Y$ be
        path-connected topological spaces such that $X\times Y$ is
        completely normal. Then,
	\[
	\catop_{\mathcal{G}}(X\times Y)\leq \catop_{\mathcal{G}}(X) + \catop_{\mathcal{G}}(Y)-1.
	\]
\end{prop}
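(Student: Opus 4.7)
The plan is to adapt the classical product-inequality argument for the Lusternik--Schnirelmann category (as in CLOT, Theorem~1.37) to the $\mathcal{G}$-setting. Let $m = \catop_{\mathcal{G}}(X)$ and $n = \catop_{\mathcal{G}}(Y)$, and fix open $\mathcal{G}$-covers $\{U_1, \ldots, U_m\}$ of~$X$ and $\{V_1, \ldots, V_n\}$ of~$Y$. The first routine check is that each product $U_i \times V_j$ is itself a $\mathcal{G}$-set in $X \times Y$: at any basepoint $(x,y)$, the path-component of $(x,y)$ in $U_i \times V_j$ is the product of the path-components of $x$ and $y$, so the image of $\pi_1(U_i \times V_j, (x,y))$ in $\pi_1(X \times Y, (x,y)) \cong \pi_1(X,x) \times \pi_1(Y,y)$ is the direct product of the images from the factors, both of which lie in $\mathcal{G}$ (for the isq-classes of interest here, such as $\{\{e\}\}$ and $\Am$, membership is preserved under finite direct products).

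Next, I would organise the $mn$ products by diagonal index: for $k \in \{2, \ldots, m+n\}$, set $I_k := \{(i,j) : i + j = k,\ 1 \leq i \leq m,\ 1 \leq j \leq n\}$, giving exactly $m + n - 1$ non-empty diagonals. The goal is to merge the products on each diagonal into a single open $\mathcal{G}$-set $W_k$ so that $\{W_k\}_{k=2}^{m+n}$ covers $X \times Y$. A convenient strategy is to realise each $W_k$ as a \emph{disjoint} open union $\bigsqcup_{(i,j) \in I_k} W_k^{(i,j)}$ with $W_k^{(i,j)} \subseteq U_i \times V_j$, because a disjoint union of $\mathcal{G}$-sets is again a $\mathcal{G}$-set: at any basepoint only one summand contributes to the $\pi_1$-image.

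The heart of the argument is constructing these disjoint unions. Following the classical shrinking scheme, one iteratively uses normality of $X \times Y$ to produce finitely-many nested open shrinkings $U_i^{(s)}$ and $V_j^{(s)}$ of the~$U_i$ and~$V_j$ with $\overline{U_i^{(s)}} \subseteq U_i^{(s+1)}$ (and similarly for the $V_j$), chosen finely enough that, for each fixed diagonal~$k$, the closed products corresponding to distinct pairs $(i,j) \in I_k$ are pairwise separated in the completely normal space $X \times Y$. Complete normality then yields pairwise disjoint open neighbourhoods $W_k^{(i,j)}$ of those closed sets inside $U_i \times V_j$. A direct bookkeeping check, using that the innermost shrinkings still cover, confirms that the resulting sets $W_k$ cover $X \times Y$.

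The main obstacle is precisely this combinatorial shrinking step: arranging the closed shrinkings so that products on the same diagonal are pairwise separated, while making sure the union of all diagonal pieces still exhausts $X \times Y$. This is the same technical subtlety that already appears in the classical $\lscat$ inequality and is where the hypothesis of complete normality is essential; the $\mathcal{G}$ version adds no new difficulty beyond the two closure properties (under direct products and disjoint unions) that were used above.
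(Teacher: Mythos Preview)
Your approach is exactly the paper's: the paper simply points to the classical argument (CLOT, Theorem~1.37) and adds that disjoint unions of open $\mathcal{G}$-sets are again $\mathcal{G}$-sets. You have spelled this out more carefully and, crucially, you flagged the one place where the $\mathcal{G}$-version needs more than the paper records: the product $U_i\times V_j$ is a $\mathcal{G}$-set in $X\times Y$ only if $\mathcal{G}$ is closed under finite direct products.

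That closure is \emph{not} part of the definition of an isq-class, and the proposition as stated is in fact false without it. Take $\mathcal{G}$ to be the class of cyclic groups: subgroups and quotients of cyclic groups are cyclic, so this is an isq-class. Then $\catop_{\mathcal{G}}(S^1)=1$ since $\pi_1(S^1)\cong\Z$ is cyclic, but $\catop_{\mathcal{G}}(S^1\times S^1)\geq 2$ since $\pi_1(T^2)\cong\Z^2$ is not cyclic; this violates the claimed bound $1+1-1=1$. So your parenthetical hedge (``for the isq-classes of interest here, such as $\{\{e\}\}$ and $\Am$'') is not mere caution but a necessary restriction. The paper's one-line proof glosses over this point; the honest statement should add the hypothesis that $\mathcal{G}$ is closed under finite direct products.
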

\begin{proof}
  One can use the same argument as in the case of LS-category~\cite[Theorem
            1.37]{CLOT} once one notices that disjoint unions of open
        $\mathcal{G}$-sets are again $\mathcal{G}$-sets.
\end{proof}

\begin{rem}
	\label{rem:F-cat:leq:lscat}
	Let $X$ a topological space and let $\mathcal{F}$ and
        $\mathcal{G}$ isq-classes of groups with~$\mathcal F \subset \mathcal G$.
        Then, we have
	\[
	\catop_{\mathcal G}(X)\leq \catop_{\mathcal F}(X)\leq \catop_{\pi_1}(X) \leq \lscat(X).
	\]
        If $X$ is a simplicial complex, then $\lscat (X) \leq \dim X + 1$~{\cite[Theorem 1.7]{CLOT}}.
\end{rem}

\begin{rem}\label{rem:pullbackcat}
  Let $\mathcal G$ be an isq-class of groups and 
  let $f \colon X \longrightarrow Y$ be a continuous map between
  connected topological spaces whose kernel~$\ker \pi_1(f)$ lies in~$\mathcal G$.
  Then pulling back open covers of~$Y$ shows that
  \[ \catop_{\mathcal G} X \leq \catop_{\mathcal G} Y.
  \]
  In particular, this applies if $f$ induces an isomorphism
  on the level of fundamental groups.
\end{rem}

\begin{defi}
  We say that path-connected spaces $X$ and $Y$ are $\pi_1$-\emph{equivalent}, if
  there exist continuous maps (called
  $\pi_1$-\emph{equivalences}) $X \to Y$ and $Y \to
  X$ that induce isomorphisms on the level of fundamental groups.
\end{defi}  

\begin{lemma}\label{prop:cathinv}
  If $\mathcal G$ is an isq-class of groups, then $\gcat$ is an invariant
  of $\pi_1$-equivalences.
\end{lemma}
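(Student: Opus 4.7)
The plan is to deduce the lemma as a direct consequence of Remark~\ref{rem:pullbackcat}. Let $X$ and $Y$ be path-connected and $\pi_1$-equivalent, witnessed by continuous maps $f \colon X \to Y$ and $g \colon Y \to X$ that both induce isomorphisms on~$\pi_1$.

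First I would observe that every isq-class of groups~$\mathcal G$ contains the trivial group: by hypothesis $\mathcal G$ is non-empty, so pick any $G \in \mathcal G$; then $\{e\} \leq G$ lies in~$\mathcal G$ by closure under subgroups. Consequently, $\ker \pi_1(f) = \{e\} \in \mathcal G$ and likewise for~$g$.

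Applying Remark~\ref{rem:pullbackcat} to~$f$ gives $\gcat(X) \leq \gcat(Y)$, and applying it to~$g$ gives $\gcat(Y) \leq \gcat(X)$. Hence $\gcat(X) = \gcat(Y)$.

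There is no real obstacle here; the content is packaged in Remark~\ref{rem:pullbackcat}, and the only small verification needed is that isq-classes contain the trivial group, which is immediate from the definition.
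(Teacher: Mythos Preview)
Your proof is correct and follows exactly the paper's approach: apply Remark~\ref{rem:pullbackcat} once in each direction. The extra verification that the trivial group lies in every isq-class is fine, though the remark already records the special case of $\pi_1$-isomorphisms explicitly.
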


\begin{proof}
  It is sufficient to apply Remark~\ref{rem:pullbackcat} twice.
\end{proof}

We conclude this section with a lemma that allows us to make parts of
a covering disjoint.

\begin{lemma}[{\cite[Lemma A.4]{CLOT}}]
	\label{lemma:reduce:cardinality:open:covers}
	Let $X$ be a topological space and let $n\in \mathbb{N}$. Let
        $U$ be an open cover of $X$ of multiplicity $n+1$ with a partition of
        unity subordinate to the cover. Then there exist and index set
        $B$ and an open covering
	\[
	\bigl\{V_{i\beta} \bigm| i\in\{1,\dots,n+1\}, \beta \in B \bigr\}
	\]
        of $X$ refining $U$
	such that $V_{i\beta}\cap V_{i\beta'}=\emptyset$ for
        all~$\beta\neq\beta'$. 
\end{lemma}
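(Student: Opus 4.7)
Let $U = \{U_\alpha\}_{\alpha \in A}$ be the given cover and $\{\phi_\alpha\}_{\alpha \in A}$ the given (locally finite) partition of unity, so that $\mathrm{supp}\,\phi_\alpha \subseteq U_\alpha$ for every $\alpha$. The multiplicity hypothesis then reads: at each $x \in X$ the set
\[
T(x) := \{\alpha \in A : \phi_\alpha(x) > 0\}
\]
has cardinality between $1$ and $n+1$.

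The plan is to use the classical ``top-$i$ values'' refinement. For every finite $S \subseteq A$ with $|S| \in \{1, \dots, n+1\}$, I would define
\[
V_S := \Bigl\{ x \in X \;\Big|\; \min_{\alpha \in S}\phi_\alpha(x) > \max_{\beta \in A \setminus S}\phi_\beta(x) \Bigr\},
\]
and verify three things. First, \emph{openness}: local finiteness of $\{\phi_\alpha\}$ implies that, on a neighbourhood of any point, both the min and the max above reduce to expressions in finitely many continuous functions, so $V_S$ is open. Second, \emph{refinement}: if $x \in V_S$ then $\phi_\alpha(x) > 0$ for every $\alpha \in S$, whence $V_S \subseteq \bigcap_{\alpha \in S}\mathrm{supp}\,\phi_\alpha \subseteq \bigcap_{\alpha \in S}U_\alpha$. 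Third, \emph{covering}: for any $x \in X$, taking $S := T(x)$ gives $|S| \in \{1, \ldots, n+1\}$ and $x \in V_{T(x)}$ by construction.

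The combinatorial core of the argument is disjointness within a fixed cardinality. If $|S| = |S'| = i$ and $S \neq S'$, there exist $\alpha \in S \setminus S'$ and $\alpha' \in S' \setminus S$, and any point $x \in V_S \cap V_{S'}$ would simultaneously satisfy $\phi_\alpha(x) > \phi_{\alpha'}(x)$ (from $x \in V_S$, since $\alpha \in S$ and $\alpha' \notin S$) and $\phi_{\alpha'}(x) > \phi_\alpha(x)$ (from $x \in V_{S'}$, since $\alpha' \in S'$ and $\alpha \notin S'$), a contradiction.

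Finally, to match the indexing in the statement, I would choose $B$ to be any set large enough to enumerate, for every $i \in \{1, \ldots, n+1\}$, all subsets of $A$ of cardinality $i$ (for instance, $B$ equal to the set of all finite subsets of $A$), and set $V_{i\beta} := V_S$ if $\beta$ corresponds to a subset $S$ of size $i$, and $V_{i\beta} := \emptyset$ otherwise. The main technical obstacle is justifying openness of $V_S$ when $A$ is uncountable, which is exactly why the local finiteness of the partition of unity is indispensable; the remaining content is the pigeonhole-style disjointness argument above.
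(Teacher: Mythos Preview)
The paper does not prove this lemma; it is stated with a citation to \cite[Lemma~A.4]{CLOT} and no proof is given in the text. Your argument is correct and is precisely the standard proof one finds in the cited reference: use the partition of unity to define, for each finite subset~$S$ of the index set, the open set where the functions indexed by~$S$ strictly dominate all the others, and observe that two distinct subsets of the same cardinality yield disjoint sets by a pigeonhole comparison. The three verifications (openness via local finiteness, refinement, covering via~$S=T(x)$) and the disjointness argument are all sound. One small remark: when you write ``the multiplicity hypothesis then reads \dots'', it would be cleaner to say explicitly that $|T(x)|\le n+1$ \emph{follows} from the multiplicity bound on~$U$ together with $\mathrm{supp}\,\phi_\alpha\subseteq U_\alpha$, rather than suggesting this is a restatement of the hypothesis; but the logic is fine as written.
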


%%%%%%%
\subsection{Topological complexity}\label{subsec:tc}

Topological complexity is a categorical invariant introduced by
Farber~\cite{Farber:top:compl:of:motion:planning} that is motivated
by the motion planning problem in robotics.  Roughly speaking, it
measures the minimal number of continuous rules one needs to move a
point in a topological space in an autonomous way.  A \emph{motion
  planning algorithm} is a program that, given a topological space $X$
and a pair of points $(A,B)\in X\times X$, returns a continuous path
in $X$ connecting them. In other words, if $PX$ denotes the space of
paths $[0,1]\rightarrow X$ endowed with the compact-open
topology, a motion planning algorithm is a continuous global section
of the path fibration
\begin{align*}
  p\colon PX & \rightarrow X\times X\\
  \gamma & \mapsto (\gamma(0),\gamma(1) )\ .
\end{align*}

\begin{defi}
	\label{defi:topological:complexity}
	Let $X$ be a path-connected topological space. The
        \emph{topological complexity} of $X$, denoted by $\TC(X)$, is the minimum $n\in
        \mathbb{N}_{\geq 1}$ such that there exists an open cover
        $\{U_1,\dots,U_n \}$ of $X\times X$ with the property that for each~$i \in \{1,\dots,n\}$, 
        there exists a continuous section $s_i:U_i\rightarrow PX$ of the path fibration~$p
        \colon PX \rightarrow X \times X$.
         If such an
        integer does not exist, we set
        $\TC(X):=\infty$.
\end{defi}

We collect some classical bounds of the topological complexity by
means of the LS category~\cite{Farber:top:compl:of:motion:planning}.

\begin{prop}
	\label{prop:bounds:TC}
	Let $X$ be a CW-complex and suppose that $X$ is (locally) compact
	or it has countably many cells. Then the
        following inequalities hold:
	\[
	\lscat(X)\leq \TC(X)\leq\lscat(X\times X)\leq 2\cdot \lscat(X) -1.
	\]
\end{prop}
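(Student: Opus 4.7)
The plan is to prove each of the three inequalities separately, via the classical arguments of Farber~\cite{Farber:top:compl:of:motion:planning}.

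For the first inequality, $\lscat(X)\leq\TC(X)$: I would start with an open cover $\{U_1,\dots,U_n\}$ of $X\times X$ equipped with continuous sections $s_i\colon U_i\to PX$ of the path fibration~$p$. Fixing a basepoint $x_0\in X$, I would set
\[
V_i \coloneqq \bigl\{x\in X \bigm| (x,x_0)\in U_i\bigr\}.
\]
Then the $V_i$ form an open cover of $X$ (since the $U_i$ cover $X\times\{x_0\}$), and the assignment $(x,t)\mapsto s_i(x,x_0)(t)$ defines a continuous homotopy in $X$ from the inclusion $V_i\hookrightarrow X$ to the constant map $x_0$. Hence every $V_i$ is contractible in $X$, yielding $\lscat(X)\leq n=\TC(X)$.

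For the second inequality, $\TC(X)\leq\lscat(X\times X)$: I would use that the path fibration $p\colon PX\to X\times X$ is a Hurewicz fibration. Given an LS-cover $\{W_1,\dots,W_m\}$ of $X\times X$, each $W_j$ admits a homotopy $H_j\colon W_j\times[0,1]\to X\times X$ from the inclusion to a constant map with value some point $(a_j,b_j)$. Using path-connectedness of $X$, I would pick any path $\gamma_j\in PX$ from $a_j$ to $b_j$; the constant map $W_j\to PX$, $w\mapsto\gamma_j$, is then a lift of $H_j(\,\cdot\,,1)$ along $p$. Applying the homotopy lifting property of $p$ to the reversed homotopy $(w,t)\mapsto H_j(w,1-t)$, starting from this constant lift at $t=0$, produces a continuous map $\widetilde H_j\colon W_j\times[0,1]\to PX$ whose value at $t=1$ is the desired section $s_j\colon W_j\to PX$ of $p$ over $W_j$.

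For the third inequality, $\lscat(X\times X)\leq 2\lscat(X)-1$: under the stated hypotheses on $X$, the product $X\times X$ carries a CW-structure and is in particular completely normal. The bound then follows from the classical product inequality for the LS-category~\cite[Theorem~1.37]{CLOT}, applied with $Y=X$ (this is the LS-analogue of Proposition~\ref{prop:cat:products}, whose proof is the same).

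The main obstacle is the second inequality: one has to confirm that $p\colon PX\to X\times X$ is a genuine (Hurewicz) fibration and arrange the homotopy lifting so that the reversed contractions of the $W_j$ assemble into continuous sections. Once this is in place, the other two inequalities are routine bookkeeping.
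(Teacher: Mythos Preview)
Your proposal is correct and reproduces exactly the classical arguments of Farber that the paper is citing; note that the paper does not give its own proof of this proposition but simply records it as a known result from~\cite{Farber:top:compl:of:motion:planning}. The only point worth making explicit is that the hypothesis on~$X$ (local compactness or countably many cells) is there precisely so that $X\times X$ is again a CW-complex, hence completely normal, which is what you need for the third inequality via~\cite[Theorem~1.37]{CLOT}.
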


In order to compare $\TC(X)$ with the more algebraic
$\mathcal{G}$-category, it is convenient to work with families of
subgroups.

\begin{defi}
	\label{defi:subgroup:family}
	Let $\Gamma$ be a group. A \emph{subgroup family} of~$\Gamma$
        is a non-empty set~$F$ of subgroups of~$\Gamma$ that is closed under
        conjugation and under taking subgroups.
\end{defi}

\begin{defi}
	\label{defi:cat:subgroup:family}
	Let $X$ be a topological space and let $\Gamma$ denote its
        fundamental group. Suppose that $F$ is a subgroup family of
        $\Gamma$. We say that an open set $U \subset X$ is
        \emph{$F$-contractible} (or, simply, an $F$-set) if for every~$x\in U$ we have
	\[
	\im\bigl(\pi_1(U\hookrightarrow X,x)\bigr)\in F.
	\] 
	An open cover~$\mathcal{U}$ of~$X$ is called an \emph{$F$-cover}
        if it is made of $F$-sets.  The \emph{$F$-category} of~$X$,
        denoted by~$\catop_F(X)$, is the minimal~$n\in
        \mathbb{N}_{\geq 1} \cup \{\infty\}$ such that $X$ admits an open $F$-cover of
        cardinality~$n$.
\end{defi}

\begin{rem}
	\label{rem:cat:subgroup:family}
        If $\mathcal G$ is an isq-class of groups and $\Gamma$ is a group, then
        we have that
	\[
	\mathcal{G}_{\Gamma}\coloneqq\{H\leq \Gamma \;|\; H\in \mathcal{G} \}
	\]
	is a subgroup family of~$\Gamma$ and that all path-connected
        topological spaces~$X$ with fundamental group~$\Gamma$ satisfy 
	\[
	\catop_{\mathcal{G}_{\Gamma}}(X)=\catop_{\mathcal{G}}(X).
	\]
\end{rem}

Moreover, the definition of $F$-category leads to a variation of the
standard topological complexity~\cite{FGLO}. This $\mathcal
D$-topological complexity provides a lower bound for the standard
topological complexity; for finite aspherical complexes one even has
equality.

\begin{defi}\label{def:D:TC}
  Let $X$ be a path-connected topological space with fundamental group~$\Gamma$ and
  let us consider the family~$\mathcal D$ of subgroups of~$\pi_1(X \times X) \cong
  \Gamma \times \Gamma$ generated by
  \[
  \Delta\coloneqq\{(\gamma,\gamma) \in\Gamma\times \Gamma\;|\; \gamma \in \Gamma \}.
  \]
  We define the $\mathcal{D}$-\emph{topological complexity}
  $\TC^{\mathcal{D}}(X)$ as the $\mathcal{D}$-category of $X \times
  X$.
\end{defi}

\begin{lemma}[{\cite[Proposition~2.4]{FGLO:upper}\cite[Lemma 2.3.2]{FGLO}}]
	\label{lemma:D-top:compl:leq:top:compl}
	\label{lemma:D-top:compl:equals:top:compl}
        Let $X$ be a path-connected topological space.
        \begin{enumerate}
        \item If $X$ is locally path-connected and semi-locally simply connected, then $\TC^{\mathcal{D}}(X)\leq \TC(X)$.
        \item If $X$ is a finite aspherical CW-complex, then
	  $
	  \TC^{\mathcal{D}}(X)=\TC(X).
	  $
        \end{enumerate}
\end{lemma}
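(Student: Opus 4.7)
The plan is to identify a continuous section of the path fibration $p\colon PX\to X\times X$ over an open set $U\subseteq X\times X$ with a homotopy $H\colon U\times[0,1]\to X$ between the two projections $p_0,p_1\colon U\to X$ via the correspondence $H(x,t)=s(x)(t)$, and then translate this homotopy datum into the $\mathcal{D}$-subgroup condition using the standard formula for the effect of a homotopy on induced maps of fundamental groups.

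For part~(1), start with an open cover $\{U_1,\dots,U_n\}$ of $X\times X$ realising $\TC(X)$ with sections $s_i\colon U_i\to PX$. Fix $i$ and a basepoint $x_0=(a_0,b_0)\in U_i$, and set $\gamma_0=s_i(x_0)$, a path in $X$ from $a_0$ to $b_0$. For any loop $\alpha$ in $U_i$ based at $x_0$, the homotopy $(x,t)\mapsto s_i(x)(t)$ restricted to $\alpha$ is a free homotopy from $p_0\alpha$ to $p_1\alpha$ moving the basepoint along $\gamma_0$, so
\[
(p_1)_*[\alpha]=[\overline{\gamma_0}]\cdot (p_0)_*[\alpha]\cdot [\gamma_0]
\]
in $\pi_1(X,b_0)$. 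Identifying $\pi_1(X,a_0)\cong\pi_1(X,b_0)\cong\Gamma$ via reference paths to a basepoint of $X$, the change-of-basepoint isomorphism along $\gamma_0$ becomes conjugation by some $g\in\Gamma$, and hence the image of $\pi_1(U_i,x_0)\to\Gamma\times\Gamma$ is contained in $\{(\beta,g^{-1}\beta g)\mid \beta\in\Gamma\}=(e,g^{-1})\,\Delta\,(e,g)$, a conjugate of $\Delta$ and therefore an element of $\mathcal{D}$. The local hypotheses on $X$ allow this verification at every $x\in U_i$, showing $U_i$ is a $\mathcal{D}$-set and yielding $\TC^{\mathcal{D}}(X)\leq\TC(X)$.

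For part~(2), suppose $X$ is a finite aspherical CW-complex and fix an open cover $\{U_1,\dots,U_n\}$ of $X\times X$ by $\mathcal{D}$-sets. It suffices to show each $U_i$ admits a section of $p$, equivalently, that $p_0|_{U_i}\simeq p_1|_{U_i}$. On a path component $C$ of $U_i$ with basepoint $x_0$, the $\mathcal{D}$-condition states that $\im(\pi_1(C,x_0)\to\Gamma\times\Gamma)$ lies in a conjugate of $\Delta$, which, read backwards through the formula of part~(1), forces $(p_0)_*$ and $(p_1)_*$ on $C$ to agree up to conjugation by a single element of $\Gamma$. Since $X$ is aspherical and $U_i$ has the homotopy type of a CW-complex (as an open subset of the finite CW-complex $X\times X$), the classical classification of maps into aspherical targets gives $p_0|_C\simeq p_1|_C$; assembling such homotopies componentwise (using local path-connectedness of $X\times X$) produces a homotopy on all of $U_i$, and hence, via the correspondence in the first paragraph, the desired section. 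This gives $\TC(X)\leq\TC^{\mathcal{D}}(X)$.

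The main technical obstacle will be the careful bookkeeping of basepoints and conjugations needed to pass between the pointwise $\mathcal{D}$-condition (involving images in $\pi_1(X\times X,x)$ up to conjugation at arbitrary points $x\in U_i$) and the global homotopy-of-maps condition on each path component of $U_i$, particularly when components have nontrivial fundamental groups. For part~(2) one also needs the standard ANR-type fact that open subsets of finite CW-complexes have the homotopy type of CW-complexes, so that the classification of maps into aspherical spaces applies on each component.
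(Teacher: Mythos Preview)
The paper does not give its own proof of this lemma; it simply states the result with citations to \cite[Proposition~2.4]{FGLO:upper} and \cite[Lemma~2.3.2]{FGLO} and moves on. Your argument is essentially the one found in those references and is correct in outline: a local section of the path fibration is the same datum as a homotopy between the two projections, and such a homotopy forces the image of~$\pi_1(U_i)$ into a conjugate of the diagonal; conversely, for aspherical targets the $\pi_1$-condition suffices to produce the homotopy and hence the section.

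Two small points. In part~(1), your sentence ``The local hypotheses on~$X$ allow this verification at every~$x\in U_i$'' is not quite right: the computation you give works at any basepoint of any path component without extra hypotheses, and the $\mathcal{D}$-condition then propagates to all basepoints in that component by conjugation. The assumptions of local path-connectedness and semi-local simple connectedness are used in the cited papers for slightly different bookkeeping (existence of the universal cover, openness of path components), not for the algebraic step you describe. In part~(2), your identification of the technical ingredients is accurate: one needs that open subsets of a finite CW-complex have the homotopy type of a CW-complex (so that maps into the aspherical space~$X$ are classified by their effect on~$\pi_1$), and that path components of the~$U_i$ are open (so that the componentwise homotopies glue to a continuous section). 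Both are standard and are exactly what the cited proofs use.
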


% is there already an established generalisation of \TC, similar to the category case?

%%%%%%%%%%%%%%%%%%%%%%%%%%%%%%
\section{Bounded cohomology}\label{sec:bc}

We recall terminology from bounded cohomology, simplicial volume, and
their interaction with amenable covers.

%%%%%%%%%%%%%
\subsection{Bounded cohomology}

Bounded cohomology of groups was first introduced by
Johnson~\cite{Johnson} and Trauber in the setting of Banach
algebras. Later, Gromov~\cite{vbc} extended this notion from groups to
topological spaces. Moreover, there is a description in terms of
homological algebra available~\cite{ivanov}. 
Given a topological space $X$, we denote by
$(C_\bullet(X), \partial_\bullet)$ and $(C^\bullet(X),
\delta^\bullet)$ the \emph{real} singular chain complex and singular
cochain complex, respectively. We endow $C^\bullet(X)$ with the 
$\ell^\infty$-\emph{norm} as follows: For every $k \in \, \N$, if $f
\in \, C^k(X)$, we define
$$ \lVert f \rVert_\infty \coloneqq
\sup \bigl\{ | f(\sigma) | \bigm| \sigma \mbox{ is a singular $k$-simplex}
\bigr\} \in [0,\infty] \ .
$$ 
A singular cochain $f \in \, C^\bullet(X)$ is
\emph{bounded} if $\sv f_\infty < +\infty$. Since by linearity the
singular coboundary operator $\delta^\bullet$ sends bounded cochains
to bounded cochains, we can restrict $\delta^\bullet$ to the
subcomplex of \emph{bounded cochains}:
$$
C_b^\bullet(X) \coloneqq \{ f \in \, C^\bullet(X) \, | \, \sv f _\infty < +\infty\}\ .
$$

\begin{defi}
  We define the \emph{bounded cohomology} of a topological space $X$
  to be the homology of $(C^\bullet_b(X), \delta^\bullet)$ and we
  denote it by $H^\bullet_b(X)$.
\end{defi}

\begin{rem}
  Since bounded cohomology is a homotopy invariant, given a discrete
  group $\Gamma$, we can define $H^\bullet_b(\Gamma)$ as the (real)
  bounded cohomology of any model of $B\Gamma$.
\end{rem}

\begin{rem}
  Bounded cohomology $H^\bullet_b(X)$ is endowed with a natural
  seminorm induced by $\ell^\infty$: For every $\varphi \in \,
  H^\bullet_b(X)$, we set $$\sv \varphi _\infty \coloneqq \inf \{\sv f
  _\infty \, | \, f \in \, C_b^\bullet(X) \mbox{ is a cocycle
    representing } \varphi\} \ .$$
\end{rem}

Notice that the inclusion of the bounded cochain complex
$C_b^\bullet(X)$ into $C^\bullet(X)$ induces a map 
$$
\comp_X^\bullet \colon H^\bullet_b(X) \to H^\bullet(X)
$$
in cohomology, the so-called \emph{comparison map}.

We recall some results that we will need in the sequel.

\begin{thm}[Gromov's mapping theorem~\cite{vbc}\cite{ivanov}]\label{map:thm}
  If $f \colon X \to Y$ is a continuous map between path-connected
  spaces such that $\pi_1(f)$ is an epimorphism with amenable kernel,
  then
  $$H^\bullet_b(f) \colon H^\bullet_b(Y) \to H^\bullet_b(X)$$ is an isometric isomorphism in all degrees. 
\end{thm}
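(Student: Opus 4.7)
The plan is to reduce the statement to a purely group-theoretic question via the Gromov--Ivanov isomorphism and then to exhibit an explicit norm-nonincreasing inverse at the cochain level, using an invariant mean on the amenable kernel.

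First, I would invoke the (naturally isometric) isomorphism $H^\bullet_b(Z) \cong H^\bullet_b(\pi_1(Z))$ between the bounded cohomology of a path-connected space $Z$ and the bounded cohomology of its fundamental group, the latter computed from the bar resolution $\ell^\infty(\pi_1(Z)^{\bullet+1})$. Because this identification is natural with respect to continuous maps, the theorem reduces to the following purely algebraic statement: if $\phi\colon \Gamma \twoheadrightarrow \Lambda$ is a surjective group homomorphism with amenable kernel $N$, then the induced map $H^\bullet_b(\Lambda) \to H^\bullet_b(\Gamma)$ is an isometric isomorphism.

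Next comes the main construction: a norm-nonincreasing cochain map $\overline{(\cdot)}\colon \ell^\infty(\Gamma^{\bullet+1})^\Gamma \to \ell^\infty(\Lambda^{\bullet+1})^\Lambda$. Fix a set-theoretic section $s\colon \Lambda \to \Gamma$ of $\phi$ and a left-invariant mean $m$ on $N$. For $c \in \ell^\infty(\Gamma^{n+1})^\Gamma$, set
$$
\overline c(\lambda_0, \dots, \lambda_n) \coloneqq m_{n_0} \cdots m_{n_n}\, c\bigl(s(\lambda_0) n_0, \dots, s(\lambda_n) n_n\bigr),
$$
where each $m_{n_i}$ denotes integration over $n_i \in N$ against $m$. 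Invariance of $m$ under left multiplication by $N$ shows that $\overline c$ does not depend on the choice of $s$; together with $\Gamma$-invariance of $c$, this forces $\overline c$ to be $\Lambda$-invariant. The operator $\overline{(\cdot)}$ is manifestly norm-nonincreasing, commutes with the bar coboundary (an alternating sum that the mean passes through), and a direct computation gives $\overline{\phi^\ast c'} = c'$ for every $c' \in \ell^\infty(\Lambda^{n+1})^\Lambda$.

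Passing to cohomology, the relation $\overline{(\cdot)} \circ \phi^\ast = \mathrm{id}$ immediately yields that $H^\bullet_b(\phi)$ is injective and preserves the seminorm. For surjectivity, the remaining task is to show that the other composition $\phi^\ast \circ \overline{(\cdot)}$ is chain-homotopic to the identity as a map of $\Gamma$-cochain complexes; this step I view as the main obstacle. I would address it inside Ivanov's homological framework: both $\phi^\ast \circ \overline{(\cdot)}$ and the identity are augmentation-preserving $\Gamma$-equivariant chain endomorphisms of the strong relatively injective resolution $\ell^\infty(\Gamma^{\bullet+1})$ of $\R$, so they must be $\Gamma$-equivariantly chain-homotopic by the standard uniqueness statement for maps between such resolutions. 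Combining this homotopy with the cochain-level norm estimate completes the proof of the isometric isomorphism.
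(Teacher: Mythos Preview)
The paper does not give its own proof of this theorem: it is stated as a classical result with citations to Gromov and Ivanov, and the text proceeds directly to the corollary. So there is nothing in the paper to compare your argument against.

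That said, your sketch is essentially the standard Ivanov proof and is correct in outline. Two remarks are worth making. First, the reduction step via the isometric isomorphism $H^\bullet_b(Z)\cong H^\bullet_b(\pi_1(Z))$ is itself a nontrivial consequence of the same relative homological algebra you invoke later (it amounts to knowing that the singular bounded cochain complex of~$Z$ is a strong relatively injective $\pi_1(Z)$-resolution of~$\R$); you should make clear you are importing this as a separate input, since otherwise the reduction looks circular. Second, in the surjectivity step you need $\phi^\ast\circ\overline{(\cdot)}$ to be a $\Gamma$-equivariant chain map on the full resolution~$\ell^\infty(\Gamma^{\bullet+1})$, not just on the subcomplex of invariants; this is true (your averaging formula extends to all of~$\ell^\infty(\Gamma^{\bullet+1})$ and left-invariance of the mean gives $\Gamma$-equivariance when the target carries the $\Gamma$-action through~$\phi$), but it deserves an explicit sentence rather than being asserted. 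With those two points clarified, the argument is complete and matches the approach in the cited references.
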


\begin{cor}\label{cor:same:behaviour:pi1:spaces}
  Let $f \colon M \to N$ be a continuous map between
  oriented closed connected manifolds. Then $H^\bullet_b(f)$ is
  injective, surjective or an isomorphism, respectively, if and only
  if $H^\bullet_b(\pi_1(f))$ is injective, surjective or an
  isomorphism, respectively.
\end{cor}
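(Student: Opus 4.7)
The plan is to factor $f$ through the classifying spaces of the fundamental groups and then apply Gromov's mapping theorem (Theorem~\ref{map:thm}) to each of the classifying maps. More precisely, for any space~$X$ that has the homotopy type of a CW-complex (in particular, for any closed manifold), the classifying map~$c_X\colon X\to B\pi_1(X)$ induces an isomorphism on fundamental groups. In particular, $\pi_1(c_X)$ is surjective with trivial (hence amenable) kernel, so Theorem~\ref{map:thm} shows that
\[
H^\bullet_b(c_X)\colon H^\bullet_b(B\pi_1(X))\longrightarrow H^\bullet_b(X)
\]
is an isometric isomorphism.

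Next, I would use functoriality of the classifying space construction: the continuous map~$f\colon M\to N$ induces a map~$B\pi_1(f)\colon B\pi_1(M)\to B\pi_1(N)$ that fits, up to homotopy, into a commutative square
\[
\xymatrix{
M\ar[r]^-{f}\ar[d]_-{c_M} & N\ar[d]^-{c_N}\\
B\pi_1(M)\ar[r]_-{B\pi_1(f)} & B\pi_1(N)\,.
}
\]
Since bounded cohomology is a homotopy invariant, applying $H^\bullet_b$ yields a genuinely commutative diagram in which both vertical arrows are isomorphisms by the previous step. A diagram chase then shows that $H^\bullet_b(f)$ is injective (respectively surjective, respectively an isomorphism) if and only if $H^\bullet_b(B\pi_1(f))$ is, which by definition of~$H^\bullet_b$ on groups means if and only if $H^\bullet_b(\pi_1(f))$ is.

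The only point requiring a little care is the construction of $B\pi_1(f)$ making the square commute up to homotopy; this is a standard fact from the theory of Eilenberg--MacLane spaces (one takes any cellular model for the~$B\pi_1$'s and extends~$\pi_1(f)$ inductively skeleton by skeleton, using that higher homotopy groups of the target classifying space vanish). I do not expect any serious obstacle: once the square and the mapping theorem are in place, the three equivalences follow immediately from the two-out-of-three type of reasoning on the commutative square in bounded cohomology.
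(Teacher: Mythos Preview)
Your proposal is correct and follows exactly the intended argument: the paper states this corollary immediately after Gromov's mapping theorem without an explicit proof, and the classifying-map square together with Theorem~\ref{map:thm} applied to~$c_M$ and~$c_N$ is precisely the standard way to deduce it.
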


%%%%%%%%%%%%%
\subsection{Simplicial volume}

In the case of oriented closed connected manifolds, bounded cohomology
can be used to define a numerical invariant called \emph{simplicial
  volume} \cite{vbc}\cite[Section~7.5]{Frigerio:book} (the original
definition is formulated differently, but in view of the duality principle
these two definitions are equivalent).

\begin{defi}
  Let $M$ be an oriented closed connected $n$-manifold. We define the
  \emph{simplicial volume} of $M$ as follows:
  $$
  \sv M \coloneqq \sup \Big\{\frac{1}{\sv \varphi _\infty} \, \Big| \, \varphi \in \, H^n_b(M) \, , \,   \langle \comp^n_M(\varphi), [M] \rangle = 1 \Big\} \ ,
  $$
  where we set $\sup \emptyset := 0$. Here, $[M] \in \, H_n(M)$
  denotes the real fundamental class of $M$ and $\langle \cdot, \cdot
  \rangle$ the Kronecker product between cohomology and homology.
\end{defi}

\begin{rem}\label{rem:duality:principle}
  Let $M$ be an oriented closed connected $n$-manifold. 
  Since $H^n(M)$ is one-dimensional,
  we have
  $$
  \sv M > 0 \Longleftrightarrow \comp_M^n \mbox{ is surjective} \ .
  $$
\end{rem}

\begin{example}\label{exa:zerosimvol}
  Since bounded cohomology of amenable groups vanishes in all positive
  degree~\cite{Johnson, vbc,ivanov}, Theorem~\ref{map:thm} shows that this is
  also the case for the bounded cohomology of oriented closed
  connected manifolds with amenable fundamental group. Hence, the
  previous remark implies that the simplicial volume of all oriented
  closed connected manifolds with amenable fundamental group (and non-zero dimension)
  is zero.
\end{example}

\begin{example}\label{exa:possimvol}
  In contrast with the previous example, the following manifolds are
  known to have positive simplicial volume:
  \begin{itemize}
  \item oriented closed connected hyperbolic manifolds~\cite{Thurston,
    vbc}; in particular, surfaces of genus~$\geq 2$,
  \item oriented closed connected locally symmetric spaces of
    non-compact type~\cite{Bucher:lss, Lafont-Schmidt},
  \item manifolds with sufficiently negative curvature~\cite{inoueyano,
    connellwang}.
  \end{itemize}
  Moreover, the class of manifolds with positive simplicial volume
  is closed with respect to connected sums and products.
\end{example}

\begin{prop}[{\cite{vbc}\cite[Section~7.2]{Frigerio:book}}]\label{prop:degree:sv}
  Let $M$ and $N$ be oriented closed connected manifolds with 
  $M \geqone N$. Then
  $$
  \sv M \geq \sv N \ .
  $$ 
\end{prop}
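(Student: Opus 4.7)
The plan is to work directly with the dual definition of simplicial volume given in the excerpt, rather than with the primal $\ell^1$-definition of the fundamental class. Fix a continuous map $f\colon M\to N$ with $\deg f=\pm 1$, so that $H_n(f)([M])=\pm[N]$ in real singular homology. The goal is to transport any cocycle witnessing a value for $\sv N$ to a cocycle witnessing an equally good value for $\sv M$.

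Concretely, I would argue as follows. Pick $\psi\in H^n_b(N)$ with $\langle\comp^n_N(\psi),[N]\rangle=1$. Define $\varphi\coloneqq H^n_b(f)(\psi)\in H^n_b(M)$. By naturality of the comparison map (which is immediate from the fact that the same chain map $C^\bullet(f)$ induces both $H^\bullet(f)$ and $H^\bullet_b(f)$), we have $\comp^n_M(\varphi)=H^n(f)(\comp^n_N(\psi))$. Combining this with the Kronecker-product naturality,
\[
\langle \comp^n_M(\varphi),[M]\rangle
=\langle \comp^n_N(\psi), H_n(f)([M])\rangle
=\pm\langle \comp^n_N(\psi),[N]\rangle = \pm 1.
\]
If the sign is $-1$, replace $\varphi$ by $-\varphi$; this does not change the seminorm. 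So $\varphi$ is admissible in the supremum defining $\sv M$.

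It remains to compare seminorms. The key point is that the pullback map $C^\bullet_b(f)\colon C^\bullet_b(N)\to C^\bullet_b(M)$, which on a bounded cochain $\alpha$ and a singular simplex $\sigma$ in $M$ is given by $(C^\bullet_b(f)\alpha)(\sigma)=\alpha(f\circ\sigma)$, is a $1$-Lipschitz map with respect to the $\ell^\infty$-norms. Passing to cohomology and then to the infimum over representatives yields $\sv\varphi_\infty\leq\sv\psi_\infty$. Therefore
\[
\frac{1}{\sv\varphi_\infty}\geq\frac{1}{\sv\psi_\infty},
\]
and taking the supremum over all admissible $\psi$ for $N$ gives $\sv M\geq\sv N$. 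The cases $\sv N=0$ and $\sv\psi_\infty=0$ are either trivial (the inequality holds vacuously) or handled by the convention $1/0=+\infty$.

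I do not expect a genuine obstacle here: the statement is the classical Gromov monotonicity, and the only delicate points are keeping track of the sign of the degree and of the direction in which contravariance of bounded cohomology interacts with the supremum. Both are handled by the two simple observations above (sign change on $\varphi$, and norm non-increase of the pullback).
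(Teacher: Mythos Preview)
Your argument is correct. The paper does not actually supply a proof of this proposition: it is stated with a citation to Gromov and to Frigerio's book and then used as a black box. Your proof via the dual (bounded-cohomology) definition of simplicial volume is exactly the standard one found in those references, so there is nothing to compare beyond noting that you have reproduced the cited argument faithfully.
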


\begin{rem}\label{rem:degree1maps:epi:mono}
  Proposition~\ref{prop:degree:sv} is an instance of the philosophy
  that if $M \geqone N$, then $M$ contains more topological information
  than~$N$.  We recall other classical results in this
  direction~\cite[Proposition~2.6]{Harpe:degree:one}: If $f \colon M
  \longrightarrow N$ is a continuous map of degree~$\pm 1$, then
  \begin{itemize}
  \item $\pi_1(f) \colon \pi_1(M) \longrightarrow \pi_1(N)$ is an
    epimorphism,
  \item $H_*(f;\Z) \colon H_*(M;\Z) \longrightarrow H_*(N;\Z)$
    is a split epimorphism,
  \item $H^*(f;\Z) \colon H^*(N;\Z) \longrightarrow H^*(M;\Z)$
    is a split monomorphism. 
  \end{itemize}
\end{rem}

\subsection{Amenable covers}

Let $\mathcal G$ be an isq-class of groups with~$\mathcal {G} \subset \Am$.
Then bounded cohomology and simplicial volume give obstructions to having
small $\mathcal{G}$-category.

\begin{thm}[Gromov's vanishing theorem~\cite{vbc}]\label{grom:van:thm}
  Let $\mathcal G$ be an isq-class of groups with~$\mathcal{G} \subset
  \Am$ and let $X$ be a topological space. Then: If $\gcat (X) \leq
  k$, then $\comp_X^s \colon H^s_b(X) \to H^s(X)$ vanishes for all $s
  \geq k$.
\end{thm}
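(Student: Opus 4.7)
The plan is to combine the definition of $\gcat$ via open covers with Gromov's mapping theorem (Theorem~\ref{map:thm}) and the classical patching argument of Gromov~\cite{vbc} underlying the vanishing theorem for amenable covers.

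First, I would fix an open cover $\mathcal{U}=\{U_1,\dots,U_k\}$ of $X$ by $\mathcal{G}$-sets witnessing $\gcat(X)\leq k$. By Proposition~\ref{prop:ag:equal:g}, each $U_i$ is also $\mathcal{A}_{\mathcal{G}}$-contractible in $X$, so on each path-component the inclusion $U_i \hookrightarrow X$ factors up to homotopy through a space $A_i$ with $\pi_1(A_i)\in\mathcal{G}\subset\Am$. Since the bounded cohomology of an amenable group vanishes in every positive degree, Theorem~\ref{map:thm} forces $H^s_b(A_i)=0$ for all $s\geq 1$, and consequently the restriction
\[
H^s_b(X) \longrightarrow H^s_b(U_i)
\]
vanishes for all $s\geq 1$ and all $i\in\{1,\dots,k\}$. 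The base case $k=1$ already delivers the conclusion, since then $X$ itself is a $\mathcal{G}$-set and $H^s_b(X)=0$ in positive degree.

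For $k\geq 2$, I would next promote this ``pointwise'' vanishing on each $U_i$ to vanishing of the comparison map $\comp^s_X$ on all of $X$ for $s\geq k$. Picking a bounded cocycle $f$ representing a given $\alpha\in H^s_b(X)$ and a partition of unity subordinate to $\mathcal{U}$, I would use the bounded primitives $h_i$ of $f|_{U_i}$ produced by the previous step together with the classical \v{C}ech-to-singular / Mayer--Vietoris iteration to assemble an (unbounded) singular cochain $h\in C^{s-1}(X)$ with $\delta h = f$. Because the nerve of $\mathcal{U}$ has dimension at most $k-1$, the induction terminates in cohomological degree $s\geq k$ with no residual obstruction, yielding $\comp^s_X(\alpha)=[f]=0$ in $H^s(X)$.

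The delicate point is precisely this assembly step: although the first step provides bounded primitives of $f$ on each individual $U_i$, the inductive patching encounters bounded cohomology classes on multiple intersections $U_{i_0}\cap\dots\cap U_{i_p}$, which need not themselves be $\mathcal{G}$-sets in~$X$. Gromov's original treatment handles this through the diffusion operator on an equivariant multicomplex (equivalently, in Ivanov's framework, via amenable averaging on a suitable relatively injective resolution), which produces representatives of $\alpha$ already supported on ``small'' simplices and hence kills the bounded-cohomology obstructions on multi-intersections. Crucially, we do \emph{not} attempt to keep the primitive $h$ bounded: the conclusion only concerns the image of the comparison map and not the vanishing of $H^s_b(X)$ itself, so the assembly is free to lose $\ell^\infty$-control as long as it produces an ordinary coboundary of $f$.
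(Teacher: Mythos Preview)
The paper does not give its own proof: Theorem~\ref{grom:van:thm} is quoted from Gromov~\cite{vbc} and used as a black box, with only a remark afterwards on the equivalence of the cardinality and multiplicity formulations.

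Regarding your sketch, the set-up and the first step are fine, but your diagnosis of the obstruction in the assembly step is off. Multiple intersections $U_{i_0}\cap\dots\cap U_{i_p}$ \emph{are} $\mathcal{G}$-sets in~$X$: each such intersection sits inside $U_{i_0}$, so the image of its fundamental group in $\pi_1(X)$ is a subgroup of the amenable group $\im(\pi_1(U_{i_0})\to\pi_1(X))$, and $\mathcal{G}$ is closed under subgroups. The genuine difficulty is that being a $\mathcal{G}$-set in~$X$ only forces the \emph{restriction} $H^*_b(X)\to H^*_b(U_I)$ to vanish, not the intrinsic bounded cohomology $H^*_b(U_I)$. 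In the inductive patching, the cocycles $(h_i-h_j)|_{U_{ij}}$ arising on double (and higher) intersections are \emph{not} restrictions of classes from~$X$---they are built from the local primitives---so neither your first step nor the $\mathcal{G}$-set property gives any control over them, whether you work in bounded or in ordinary cochains. This is precisely why the naive \v{C}ech/Mayer--Vietoris iteration stalls and why Gromov's multicomplex/diffusion argument (or Ivanov's averaging over amenable groups on a relatively injective resolution) is genuinely required. Your final paragraph concedes this and defers to that machinery; that is honest, but it means the proposal locates the difficulty rather than resolving it.
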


\begin{rem}
  It is worth noticing that usually the previous theorem is stated in
  a different version, by requiring a control on the multiplicity of
  the cover instead of its cardinality~\cite{vbc}. More precisely, 
 if $X$ admits an open cover given by amenable 
 sets $\mathcal{U}$ with multiplicity at most $k$, 
 then $\comp_X^s \colon H^s_b(X) \to H^s(X)$ vanishes for all $s
  \geq k$.
  It is immediate to check that the bound on the cardinality of $\mathcal{U}$
   in Theorem~\ref{grom:van:thm} is a priori stronger than the one
  on the multiplicity of $\mathcal{U}$. On the other hand, by
  Lemma~\ref{lemma:reduce:cardinality:open:covers}, one can check 
  that these two
  formulations are equivalent if the space $X$ is a CW-complex.
\end{rem}

\begin{cor}\label{cor:catsimvol}
  If $M$ is an oriented closed connected manifold with $\sv M > 0$, 
  then $\amcat M = \dim M + 1$.
\end{cor}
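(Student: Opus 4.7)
The plan is to prove the equality by establishing matching upper and lower bounds on $\amcat M$, each coming from one of the ingredients developed just above the statement.

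For the upper bound, set $n \coloneqq \dim M$. Since $M$ is an oriented closed connected manifold it admits the structure of a finite simplicial complex of dimension $n$, so Remark~\ref{rem:F-cat:leq:lscat} gives
\[
\amcat M \leq \lscat M \leq n + 1.
\]

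For the lower bound, I would argue by contradiction: suppose $\amcat M \leq n$. Applying Gromov's vanishing theorem (Theorem~\ref{grom:van:thm}) with the isq-class $\mathcal G = \Am$ and $k = n$ yields that the comparison map
\[
\comp_M^s \colon H^s_b(M) \longrightarrow H^s(M)
\]
vanishes for every $s \geq n$; in particular $\comp_M^n = 0$. On the other hand, the hypothesis $\sv M > 0$ combined with Remark~\ref{rem:duality:principle} tells us precisely that $\comp_M^n$ is surjective. Since $H^n(M) \neq 0$ (it is one-dimensional by orientability), the two conclusions contradict each other, so in fact $\amcat M \geq n + 1$.

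Combining the two bounds gives $\amcat M = \dim M + 1$, as claimed. There is no real obstacle here: the argument is a direct assembly of Theorem~\ref{grom:van:thm}, Remark~\ref{rem:duality:principle}, and the dimensional bound from Remark~\ref{rem:F-cat:leq:lscat}. The only minor subtlety worth making explicit is that Gromov's vanishing is being invoked in its cardinality formulation, which is exactly the form stated in Theorem~\ref{grom:van:thm} and thus requires no appeal to the multiplicity version discussed in the surrounding remark.
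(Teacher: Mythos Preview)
Your proof is correct and follows essentially the same approach as the paper: both combine Remark~\ref{rem:duality:principle} with Theorem~\ref{grom:van:thm} to get the lower bound, while you make the (implicit in the paper) upper bound from Remark~\ref{rem:F-cat:leq:lscat} explicit. One small quibble: not every closed topological manifold is literally triangulable, but the dimension estimate~$\lscat M \leq \dim M + 1$ still holds (e.g., via a CW-structure of the right dimension), so this does not affect the argument.
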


\begin{proof}
  As mentioned in Remark~\ref{rem:duality:principle}, the positivity
  of the simplicial volume of~$M$ is equivalent to the surjectivity
  of~$\comp^n_M$. Hence, Theorem~\ref{grom:van:thm} implies the claim.
\end{proof}

Moreover, we introduce the family of spaces for which
the previous theorem is in fact an \emph{if and only if} statement:

\begin{defi}
  Let $\mathcal G$ be an isq-class of groups with~$\mathcal{G} \subset \Am$.
  A topological space~$X$ has \emph{$\efcat$} if
  \[ \gcat(X)
  = \max \bigl\{
  s \in \N \bigm|
  \comp_X^{s-1} \neq 0
  \bigr\}.
  \]
\end{defi}

\begin{rem}
  In this context it is interesting to consider the following question:
  Does every oriented closed connected \emph{aspherical} manifold
  have efficient $\Am$-category?

  If this were the case, then the vanishing of simplicial volume of
  aspherical closed connected manifolds would imply the vanishing of
  their $L^2$-Betti numbers~\cite{sauerminvol} and thus of their Euler
  characteristic. In particular, this would give an affirmative answer
  to the corresponding question by Gromov~\cite[p.~232]{gromovasym}.
\end{rem}

We conclude this section by discussing some difficulties in extending
classical lower bounds for the $\LS$-category to the amenable
setting:

\begin{rem}
	\label{rem:cup:length}
	A standard lower bound of the $\LS$-category is given by the
        cup length~{\cite[Proposition 1.5]{CLOT}}. A similar lower
        bound for the amenable category $\amcat(X)$ would give new
        insights into the cup product structure in bounded cohomology.
        Indeed, given a topological space $X$, we can define the
        \emph{bounded cup length} $\text{cup}_b(X)$ of $X$ as the
        maximal~$n\in\mathbb{N}_{\geq 0}$ such that there
        exist $\alpha_1,\dots, \alpha_n \in H^\bullet_b(X;\R)$ of
        positive degree with $\alpha_1\smile\dots \smile \alpha_n
        \neq0$.  A natural question is now to ask if the bounded cup
        length still provides a lower bound in the case of
        $\Am$-category:
	\[
	\textup{cup}_b(X) + 1 \leq \amcat(X)\;?
	\]
	A positive answer to this question would have relevant
        consequences to the computation of higher dimensional bounded
        cohomology groups.

        For instance, if $X$ is a wedge of at least two 
        circles, we have that its \emph{second} and \emph{third}
        bounded cohomology groups are infinite dimensional vector
        spaces, since the fundamental group of $X$ is a non-abelian
        free group~\cite{Mitsu, Soma3D}. On the other hand, it is an open 
        problem whether the bounded cohomology of~$X$ vanishes in
        higher degrees. A natural approach to investigate
        the problem is to study the non-vanishing of cup products
        between cohomology classes in degree $2$ or $3$. However,
        recently Heuer~\cite{Heuer:cup} and Bucher-Monod~\cite{BM:cup}
        proved independently that this approach fails at least for
        classes induced by Rolli and Brooks quasimorphisms. Hence, a
        positive answer in this situation to the question above would
        show that this is in fact the case of \emph{all} cup product
        in the bounded cohomology of $X$. Indeed, since $\amcat(X)=2$,
        we would get $\text{cup}_b(X)\leq 1$, whence the claim.
\end{rem}

%%%%%%%%%%%%%

\section{From complexes to manifolds}\label{sec:cplxs:mflds}

Using the standard embedding-thickening argument, examples of category
computations for finite complexes can be promoted to corresponding
examples of closed manifolds.

\begin{lemma}\label{lem:amcatmfd}
  Let $\mathcal G$ be an isq-class of groups, 
  let $k \in \N_{\geq 2}$, let $n \in \N_{\geq 2k}$, and let $X$ be a
  connected finite CW-complex of dimension at most~$k$. Then there
  exists an oriented closed connected $n$-manifold~$M$ with
  \[ \gcat M \leq \gcat X \qand \pi_1(M) \cong \pi_1(X).
  \]
\end{lemma}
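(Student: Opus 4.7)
The plan is to realise the desired manifold as the boundary of a regular neighbourhood of $X$ embedded in a sphere, and then transport the category estimate via Remark~\ref{rem:pullbackcat}. The first step is to use the classical PL embedding theorem: since $\dim X \leq k$ and the hypothesis $n \geq 2k$ gives $n + 1 \geq 2k + 1$, general position provides an embedding $X \hookrightarrow S^{n+1}$ as a subcomplex. Let $W$ be a regular neighbourhood of (the image of) $X$ in $S^{n+1}$. Then $W$ is a compact connected orientable PL $(n+1)$-manifold with boundary, and the inclusion $X \hookrightarrow W$ is a homotopy equivalence, with a deformation retraction $r \colon W \to X$.

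I would then set $M \coloneqq \partial W$, oriented as the boundary of the oriented manifold~$W$. By construction $M$ is an orientable closed $n$-manifold; what is left to check is that $M$ is connected, that $\pi_1(M) \cong \pi_1(X)$, and that $\gcat M \leq \gcat X$. All three facts rest on the codimension bound $(n + 1) - k \geq k + 1 \geq 3$, which follows from $k \geq 2$ and $n \geq 2k$.

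For connectedness, let $V \coloneqq \overline{S^{n+1} \setminus W}$. Since $X$ has codimension at least $2$ in $S^{n+1}$, general position shows that $S^{n+1} \setminus X$ is path-connected, hence so is $V$. Combined with $W$ connected and $H_1(S^{n+1}) = 0$ (as $n + 1 \geq 2$), the reduced Mayer--Vietoris sequence for the decomposition $S^{n+1} = W \cup V$ with $W \cap V = M$ forces $\tilde H_0(M) = 0$, so $M$ is connected. For the fundamental group, codimension $\geq 3$ implies by a standard general position argument that the inclusion $W \setminus X \hookrightarrow W$ induces an isomorphism on $\pi_1$; since $W \setminus X$ deformation retracts onto $M$, this yields $\pi_1(M) \cong \pi_1(W) \cong \pi_1(X)$.

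Finally, the composite $M \hookrightarrow W \xrightarrow{r} X$ induces an isomorphism on fundamental groups, and so its kernel is trivial and in particular belongs to the isq-class~$\mathcal{G}$. Remark~\ref{rem:pullbackcat} then gives $\gcat M \leq \gcat X$, as desired. The main obstacle in the argument is the codimension bookkeeping: both the connectedness of $\partial W$ and the preservation of~$\pi_1$ require codimension at least $3$, and the hypotheses $k \geq 2$ and $n \geq 2k$ are exactly what guarantees this. No subtlety arises for the category comparison itself, which is a direct application of the pullback remark.
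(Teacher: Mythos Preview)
Your proof follows essentially the same embed--thicken--take-boundary strategy as the paper, with correct dimension bookkeeping (embedding into~$S^{n+1}$ so that~$M=\partial W$ is genuinely $n$-dimensional, and the codimension~$(n+1)-k\geq 3$ gives both connectedness of~$\partial W$ and the $\pi_1$-isomorphism). One small technical point: the PL general-position embedding theorem and regular neighbourhood theory apply to simplicial/PL complexes, not to arbitrary finite CW-complexes, so you should first replace~$X$ by a homotopy equivalent finite simplicial complex of dimension at most~$k$---exactly the paper's opening step---which is harmless since~$\gcat$ is invariant under $\pi_1$-equivalences (Lemma~\ref{prop:cathinv}).
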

\begin{proof}
  The CW-complex~$X$ is homotopy equivalent to a connected finite
  simplicial complex~$Y$ of dimension at most~$k$~\cite[Theorem~2C.5]{Hatcher}. Then
  $\pi_1(Y) \cong \pi_1(X)$ and $\gcat (Y) = \gcat (X)$
  (Proposition~\ref{prop:cathinv}). Therefore, it suffices to prove
  the claim for~$Y$.
  
  Since~$Y$ is a simplicial complex, we can embed it into~$\R^{n}$ for
  every $n \geq 2 \dim (Y) +1$~\cite[Theorem~1.6.1]{Mato:simp:emb}.
  Let $W \subset \R^{n}$ be a regular neighbourhood
  of~$Y$~\cite{RS-PL}.
  
  Then $W$ is an orientable compact connected
  $n$-manifold~\cite[Proposition~3.10]{RS-PL} with non-empty boundary
  and $W$ deformation retracts onto~$Y$~\cite[Corollary~3.30]{RS-PL}.
  Let us call this deformation
  retraction~$h$. In particular, $\pi_1(W) \cong \pi_1(Y)$.
  
  We define $$M \coloneqq \partial W$$ and we claim that $M$ is our
  desired oriented closed connected $n$-manifold. First notice that
  since~$k \geq 2$, the complex~$Y$ has codimension at least~$3$ in~$W$
  and~$\R^n$. This
  implies that the boundary~$M$ is connected and that the
  boundary inclusion $i_M \colon M \to W$ induces an
  isomorphism on fundamental groups.  Moreover, if we consider the
  composition
  $$
  h \circ i_M \colon M \to Y
  $$
  we obtain a map from $M$ to $Y$ inducing an isomorphism on the
  fundamental groups.  Hence, we have that $\pi_1(M) \cong \pi_1(Y)$
  and $\gcat (M) \leq \gcat (Y)$ (Remark~\ref{rem:pullbackcat}). This
  finishes the proof.
\end{proof}

Since in most of our applications we will make explicit use of a
retraction from $M$ onto $Y$, we prefer to state the following version 
in which we construct~$M$ as the double of the regular neighbourhood
$W$ of $Y$ in $\mathbb{R}^n$. This will produce a shift by $1$ on the
dimension of the resulting manifold.

\begin{prop}\label{prop:catY:catmfld}
  Let $\mathcal G$ be an isq-class of groups and let $Y$ be a simplicial
  complex of finite dimension.  Then, for every $n \geq 2 \dim(Y) +1$,
  there exists an oriented closed connected $n$-manifold $M$ such that
  \begin{enumerate}
  \item  $\pi_1(M) \cong \pi_1(Y)$;
  \item There exists a retraction $f \colon M \to Y$, i.e., $f \circ i_Y = \id_Y$, 
    where $i_Y$ is the inclusion of $Y$ into $M$.
  \end{enumerate}
  Moreover, if $Y$ is a model for~$B\Gamma$ of some finitely
  presented group~$\Gamma$ or if $\pi_1 (Y)$ is Hopfian, we have the
  following:
  \begin{enumerate}
  \item[(3)] $\gcat (M) \leq \gcat (Y)$;
  \item[(4)] If $Y$ has $\efcat$, then $M$ has $\efcat$ and $\gcat (M) = \gcat (Y)$.
  \end{enumerate}
  Finally, if $Y$ is an oriented closed connected manifold, then the
  previous result can be improved by taking $n \geq 2 \dim(Y)$.
\end{prop}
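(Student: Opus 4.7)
My plan is to refine the embedding-thickening argument of Lemma~\ref{lem:amcatmfd} by taking $M$ to be the \emph{double} of a regular neighbourhood of $Y$ rather than its boundary; this immediately produces a retraction $M\to Y$.

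First I would embed $Y$ in~$\R^n$: for a general finite-dimensional simplicial complex the hypothesis $n\geq 2\dim(Y)+1$ allows this via~\cite[Theorem~1.6.1]{Mato:simp:emb}, while if $Y$ is a closed manifold Whitney's embedding theorem improves the bound to $n\geq 2\dim(Y)$. Let $W\subset\R^n$ be a regular neighbourhood of $Y$; then $W$ is a compact orientable $n$-manifold with nonempty boundary that deformation retracts onto~$Y$ via some $r\colon W\to Y$. Set $M:=DW=W_1\cup_{\partial W}W_2$; this is a closed orientable $n$-manifold, connected since both $W$ and $\partial W$ are connected (the latter by codimension at least $2$). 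Take $i_Y$ to be the inclusion $Y\hookrightarrow W_1\subset M$, and define $f\colon M\to Y$ by gluing two copies of $r$ (which agree on $\partial W$). Then $f\circ i_Y=\id_Y$, proving~(2). For~(1), the van~Kampen theorem gives $\pi_1(M)\cong \pi_1(W)*_{\pi_1(\partial W)}\pi_1(W)$ with both factor maps equal to $\pi_1(\partial W\hookrightarrow W)$; since $n-\dim Y\geq 2$, this latter map is surjective (via the codimension-$\geq 3$ argument of Lemma~\ref{lem:amcatmfd} when $\dim Y\geq 2$, and via the classical handlebody-surface inclusion when $\dim Y=1$ and $n=3$), so the amalgam collapses to $\pi_1(W)\cong \pi_1(Y)$.

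For~(3), I would invoke Remark~\ref{rem:pullbackcat} after exhibiting a $\pi_1$-isomorphism. In the Hopfian case, by~(1) the surjection $\pi_1(f)\colon \pi_1(M)\to\pi_1(Y)$ is a map between isomorphic groups; precomposing with any isomorphism $\pi_1(Y)\cong\pi_1(M)$ yields a surjective endomorphism of a Hopfian group, hence an isomorphism, so $\pi_1(f)$ has trivial kernel. In the case $Y=B\Gamma$, by~(1) we have $\pi_1(M)\cong\Gamma$, and a classifying map $M\to B\Gamma=Y$ (independent of~$f$) is a $\pi_1$-isomorphism. In either subcase Remark~\ref{rem:pullbackcat} gives $\gcat M \leq \gcat Y$.

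For~(4), I would combine naturality of the comparison map with $f\circ i_Y=\id_Y$: naturality yields $H^{*}(f)\circ \comp_Y^{*}=\comp_M^{*}\circ H^{*}_b(f)$, and $H^{*}(i_Y)\circ H^{*}(f)=\id$ then gives $H^{*}(i_Y)\circ \comp_M^{*}\circ H^{*}_b(f)=\comp_Y^{*}$, so $\comp_Y^{k-1}\neq 0$ forces $\comp_M^{k-1}\neq 0$. Together with Theorem~\ref{grom:van:thm} and efficiency of~$Y$ this yields $\gcat M\geq \gcat Y$; combined with~(3) we conclude $\gcat M = \gcat Y$ and efficiency of~$M$. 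The main obstacle is ensuring the surjectivity of $\pi_1(\partial W)\to \pi_1(W)$ in~(1) in the borderline codimension-$2$ situation, which requires a hands-on analysis beyond the clean codimension-$\geq 3$ argument used in the preceding lemma; once this is in place, (3) and (4) follow from the retraction structure together with standard functoriality.
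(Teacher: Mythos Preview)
Your proposal is correct and follows essentially the same route as the paper: embed~$Y$, take a regular neighbourhood~$W$, set~$M:=DW$, obtain the retraction by folding the double and composing with the deformation retraction~$W\to Y$, compute~$\pi_1(M)$ via van~Kampen using surjectivity of~$\pi_1(\partial W)\to\pi_1(W)$, and then derive~(3) and~(4) from Remark~\ref{rem:pullbackcat} and naturality of the comparison map exactly as you describe. If anything, you are slightly more careful than the paper in flagging the codimension-$2$ borderline for the $\pi_1$-surjectivity of the boundary inclusion; the paper simply asserts this step.
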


\begin{proof}
  \emph{Ad~1.} We already know that ~$Y$ can embedded in~$\R^{n}$ for
  every $n \geq 2 \dim (Y) +1$~\cite[Theorem~1.6.1]{Mato:simp:emb}.
  Let $W \subset \R^{n}$ be a regular neighbourhood of~$Y$ and let $h$
  be the deformation retraction from $W$ to $Y$.
  
  Since $Y$ has codimension at least~$2$ in~$W$, we know that the
  inclusion $i_{\partial W} \colon \partial W \to M$ of the boundary 
  induces an epimorphism on fundamental groups.
  
  We now claim that the double manifold $M \coloneqq D(W)$ is the
  desired oriented closed connected $n$-manifold.  First, notice that
  the fundamental group of $M$ is isomorphic to the amalgamated
  product $\pi_1(W) \ast_{\pi_1(\partial W)} \pi_1(W)$, where the
  morphisms $\pi_1(\partial W) \to \pi_1(W)$ are given by
  $\pi_1(i_{\partial W})$.  Since the latter map is an epimorphism, we
  have that $$\pi_1(M) \cong \pi_1(W) \ast_{\pi_1(\partial W)}
  \pi_1(W) \cong \pi_1(W) \cong \pi_1(Y) \ .$$
   
  \emph{Ad~2.} By construction, there exists a retraction
  $r \colon M \to W$, i.e., $r \circ i_W = \id_W$ where $i_W$
  denotes the inclusion of~$W$ into its double~$M$.  Then 
  the composition
  \begin{equation}\label{eq:composition:classifying}
  h \circ r \colon M \to Y 
  \end{equation}
  is the claimed retraction.
  
  \emph{Ad~3.} By Remark~\ref{rem:pullbackcat}, it is sufficient to
  construct a map~$f \colon M \to Y$ that induces an isomorphism on
  fundamental groups.
  
  Now, we have two cases. Let us suppose that $\pi_1(Y)$ is
  Hopfian. Then the retraction in
  Equation~(\ref{eq:composition:classifying}) induces an epimorphism 
  $\pi_1(M) \cong \pi_1(Y) \to \pi_1(Y)$, whence an
  isomorphism. This shows that $\gcat (M)
  \leq \gcat (Y)$.
  
  On the other hand, if we assume that $Y$ is a model for $B\Gamma$,
  there exists a classifying map $f \colon M \to Y$ inducing an
  isomorphism on fundamental groups.  Hence, also in this case we get
  $\gcat (M) \leq \gcat (Y)$. This concludes the proof of the claim.
  
  \emph{Ad~4.} We already know from the second part that $\gcat (M) \leq
  \gcat (Y)$. Hence, we only have to prove the converse.
  Since $Y$ has $\efcat$, we can assume that $\gcat (Y) =
  k + 1$ and $\comp_Y^k$ is non-trivial. Let us consider the map~$f
  \coloneqq h \circ r \colon M \to Y$ defined in the proof of the
  second part.  Then we have the following commutative diagram:
	$$
	\xymatrix{
	H^k_b(Y) \ar[rr]^-{H^k_b(f)} \ar[d]_-{\comp^k_Y} && H^k_b(M) \ar[d]^-{\comp^k_M} \\
    H^k(Y) \ar[rr]_-{H^k(f)} && H^k(M) \ .
    }
   $$     
   Since $f$ is a retraction, we have that the two
   horizontal arrows are monomorphisms.  Hence, the non-vanishing
   of~$\comp^k_Y$ also implies the
   non-vanishing of~$\comp^k_M$. By applying
   Theorem~\ref{grom:van:thm}, we get $\gcat (M) \geq k = \gcat (Y)$.
   This finishes the proof of this item.
  
   The last statement comes from the fact that if $Y$ is an orientable
   closed connected manifold, then the strong Whitney embedding
   theorem shows that $Y$ can be embedded in $\R^{n}$ for every $n
   \geq 2 \dim(Y)$.
\end{proof}

\begin{rem}\label{rem:improveddim}
  When using Proposition~\ref{prop:catY:catmfld},
  we will always be able to obtain a corresponding
  version with the improved dimension bound, provided
  that we use a manifold model of the classifying space
  as input. Usually, we will not formulate these
  improved versions explicitly.
\end{rem}

\begin{rem}\label{rem:resulting:manifold:triangulable}
Notice that by construction the resulting manifold~$M$ 
in Proposition~\ref{prop:catY:catmfld} is oriented closed connected
and \emph{triangulable}.
\end{rem}

An oriented closed connected triangulable $n$-manifold $M$ 
is \emph{essential} in the sense of Gromov~\cite[4.40]{gromovmetric}
if for every map $$f \colon M \to B\pi_1(M)$$ inducing an isomorphism on fundamental
groups, the image $f(M)$ is not contained in the $(n-1$)-skeleton of $B\pi_1(M)$.
In particular, this is the case of an oriented closed connected triangulable $n$-manifold 
$M$ which admits a 
classifying map $f \colon M \to B\pi_1(M)$ inducing a non-trivial map
$$
H^n(f) \colon H^n(B\pi_1(M)) \to  H^n(M)
$$
on the \emph{real} $n$-th cohomology groups.

As introduced by Dranishnikov, Katz, and Rudyak~\cite[Definition~5.1]{DKR} one can extend Gromov's definition 
as follows: Let $k > 1$. Then, an oriented closed connected triangulable
$n$-manifold $M$ is said to be \emph{strictly} $k$-\emph{essential} if 
there is no map between skeleta
$$
f \colon M^{(k)} \to B\pi_1(M)^{(k-1)}
$$
inducing an isomorphism on fundamental groups. 
Clearly, an oriented closed connected triangulable $n$-manifold is 
essential if and only if it is strictly $n$-essential. 
We then introduce the following definition:

\begin{defi}
  Let $k > 1$. An oriented closed connected triangulable $n$-manifold $M$ is said to be \emph{essential
  in degree} $k$ if the classifying map $f \colon M \to
  B\pi_1(M)$ induces a non-trivial map on the real $k$-th cohomology
  groups
  $$
  H^k(f) \colon H^k(B\pi_1(M)) \to  H^k(M) \ .
  $$
\end{defi} 

\begin{rem}\label{rem:essential:in:degree:vs:strictly}
  It is immediate to check that if an oriented closed connected triangulable
  $n$-manifold is essential in degree $k > 1$, then it is also strictly $k$-essential.
  On the other hand, the converse is false.
\end{rem}

\begin{cor}\label{cor:hopfian:strictly:essential}
  Let $\mathcal G$ be an isq-class of groups and
  let $Y$ be an oriented closed connected triangulable $n$-manifold
  that is essential in degree $k > 1$. Moreover, suppose that $\pi_1(Y)$
  is Hopfian.

  Then, for every $n \geq 2 \cdot
  \dim (Y)$, there exists an oriented closed connected triangulable $n$-manifold~$M$ essential 
  in degree $k$ such that
  $$
  \gcat (M) \leq \gcat (Y)  \qand \pi_1(M) \cong \pi_1(Y).
  $$
\end{cor}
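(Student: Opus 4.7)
The plan is to apply Proposition~\ref{prop:catY:catmfld} directly to $Y$ and then extract the essentiality in degree $k$ from the retraction that the proposition provides. Since $Y$ is an oriented closed connected manifold, the last sentence of Proposition~\ref{prop:catY:catmfld} lets us work with the improved embedding bound $n \geq 2 \dim(Y)$. The Hopfian hypothesis on $\pi_1(Y)$ is exactly the one needed to invoke items~(1)--(3) of Proposition~\ref{prop:catY:catmfld}, yielding an oriented closed connected $n$-manifold $M$ with $\pi_1(M) \cong \pi_1(Y)$, a retraction $f \colon M \to Y$, and $\gcat(M) \leq \gcat(Y)$. By Remark~\ref{rem:resulting:manifold:triangulable}, this $M$ is triangulable, so the required properties aside from essentiality are in place.

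The remaining task is to verify that $M$ is essential in degree~$k$. Let $c_Y \colon Y \to B\pi_1(Y)$ and $c_M \colon M \to B\pi_1(M)$ be classifying maps. The isomorphism $\pi_1(M) \cong \pi_1(Y)$ induced by $f$ lets us identify $B\pi_1(M)$ with $B\pi_1(Y)$, and under this identification $c_M$ and $c_Y \circ f$ induce the same homomorphism on fundamental groups. By the defining homotopy-universal property of classifying spaces of aspherical type, the two maps $M \to B\pi_1(Y)$ are homotopic. Consequently, on $k$-th real cohomology,
\[
H^k(c_M) = H^k(f) \circ H^k(c_Y).
\]

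The key point is now that $f$ is a genuine retraction: $f \circ i_Y = \id_Y$ for the inclusion $i_Y \colon Y \hookrightarrow M$. Applying $H^k$ gives $H^k(i_Y) \circ H^k(f) = \id_{H^k(Y)}$, so $H^k(f) \colon H^k(Y) \to H^k(M)$ is split injective. Combined with the essentiality hypothesis $H^k(c_Y) \neq 0$, this forces $H^k(c_M) \neq 0$, proving that $M$ is essential in degree~$k$.

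There is no real obstacle here: the whole statement is a corollary of Proposition~\ref{prop:catY:catmfld}, and the only new ingredient is the observation that a retraction induces a split injection on cohomology, which passes essentiality from $Y$ to $M$ along the factorisation of the classifying map $c_M \simeq c_Y \circ f$.
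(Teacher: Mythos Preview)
Your argument is correct and matches the paper's proof: build $M$ via Proposition~\ref{prop:catY:catmfld}, use the retraction $f \colon M \to Y$ to see that $c_Y \circ f$ is a classifying map for~$M$, and then transfer essentiality in degree~$k$ via the split injectivity of~$H^k(f)$. The only point worth making more explicit is why $\pi_1(f)$ is an isomorphism (and not merely a surjection): the \emph{statement} of Proposition~\ref{prop:catY:catmfld} does not assert that the retraction itself induces the $\pi_1$-isomorphism of item~(1), so one needs the Hopfian argument---$\pi_1(f)$ is onto because $f$ is a retraction, $\pi_1(M)\cong\pi_1(Y)$ by item~(1), hence $\pi_1(f)$ is a self-surjection of a Hopfian group and therefore an isomorphism---which the paper spells out.
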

\begin{proof}
  Let $g \colon Y \to B\pi_1(Y)$ be
  the classifying map of $Y$. Since $Y$ is essential in degree~$k$ we have that 
  $$
  H^k(g) \colon H^k(B\pi_1(Y)) \to H^k(Y)
  $$
  is non-trivial.
  
  Let us now construct an oriented closed connected triangulable
   $n$-manifold $M$ from~$Y$ as explained in  
  Proposition~\ref{prop:catY:catmfld} and Remark~\ref{rem:resulting:manifold:triangulable}.  
  Using Proposition~\ref{prop:catY:catmfld}.2, we obtain a retraction
  map~$f \colon M \to Y$;
  in particular, $f$ induces an injection in both bounded and
  ordinary cohomology. This shows that the composition
  $$
  H^k(g \circ f) \colon H^k(B\pi_1(Y)) \to H^k(M)
  $$
  is non-trivial. Hence, we can conclude that $M$ is essential in degree $k$
  if we show that $g \circ f$ is in a fact a classifying map for $M$. To this end, 
  notice that $f$ is a retraction and $\pi_1(Y)$ is Hopfian, thus
  $f$ is  also a $\pi_1$-isomorphism.  This shows that the composition 
  $g \circ f$ is a classifying map for $M$, whence that $M$ is essential in
  degree $k$. Then the thesis follows by applying
  Proposition~\ref{prop:catY:catmfld}.3.
\end{proof}

%%%%%%%%%%%%%%%%%%%%%%%%%%%%%%%%%%%%%%%%%%%%%%%%%%%%%%%%%%%%%%%%%%%%%%%%%%%%%%%%%%%%%%
\section{Small amenable category and the fundamental group}\label{sec:smallamcat}

We will now prove Theorem~\ref{thm:smallamcat}. We begin with the following statement 
that corresponds to the first item of Theorem~\ref{thm:smallamcat}.

%%%%%%%%%

\begin{prop}\label{prop:amcat3}
  Let $\Gamma$ be a finitely presented group and let $n \in \N_{\geq
    4}$. Then there exists an oriented closed connected
  $n$-manifold~$M$ with~$\pi_1(M) \cong \Gamma$ and $\amcat M \leq 3$.
\end{prop}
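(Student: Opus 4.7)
The plan is to reduce the statement to Lemma~\ref{lem:amcatmfd} by first constructing a suitable two-dimensional input complex realising~$\Gamma$ as its fundamental group.

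\textbf{Step 1: Build a 2-complex realising $\Gamma$.} Since $\Gamma$ is finitely presented, choose a finite presentation and let $X$ be the associated presentation $2$-complex (a finite connected CW-complex of dimension at most~$2$). By construction, $\pi_1(X) \cong \Gamma$.

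\textbf{Step 2: Bound the amenable category of $X$.} By Remark~\ref{rem:F-cat:leq:lscat}, for any isq-class~$\mathcal G$ of groups, $\catop_{\mathcal G}(X) \leq \lscat(X)$, and for a CW-complex (equivalently, a simplicial complex after the standard replacement) one has $\lscat(X) \leq \dim X + 1$. Hence
\[
\amcat X \leq \lscat X \leq 3.
\]

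\textbf{Step 3: Pass from $X$ to a manifold.} Apply Lemma~\ref{lem:amcatmfd} with $k = 2$; the hypothesis $n \in \N_{\geq 4} = \N_{\geq 2k}$ is exactly what the lemma requires. This produces an oriented closed connected $n$-manifold~$M$ with $\pi_1(M) \cong \pi_1(X) \cong \Gamma$ and $\amcat M \leq \amcat X \leq 3$, as desired.

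There is essentially no obstacle in this argument: the main point is recognising that since amenable category is bounded above by $\lscat$, which in turn is bounded by $\dim + 1$, a two-dimensional model automatically has amenable category at most~$3$, and the embedding-thickening machinery of Lemma~\ref{lem:amcatmfd} transports this bound into every dimension~$n \geq 2 \cdot 2 = 4$ while preserving the fundamental group.
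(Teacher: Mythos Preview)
Your proof is correct and follows essentially the same approach as the paper: take the presentation $2$-complex~$X$ and apply Lemma~\ref{lem:amcatmfd} with~$k=2$. You make explicit the bound~$\amcat X \leq \lscat X \leq \dim X + 1 = 3$ (via Remark~\ref{rem:F-cat:leq:lscat}), which the paper leaves implicit; otherwise the arguments are identical.
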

\begin{proof}
  Let $X$ be the presentation complex of a finite presentation
  of~$\Gamma$.  Then $X$ is a connected finite CW-complex of
  dimension~$2$ with~$\pi_1(X) \cong \Gamma$. Applying
  Lemma~\ref{lem:amcatmfd} to~$X$ gives a manifold with the claimed
  properties.
\end{proof}

%%%%%%%%%

It is instructive to keep the following example in mind:

\begin{example}
  Let $n \in \N_{\geq 3}$ and let $T^n$ denote the $n$-torus. Then
  \[ \amcat (T^n \connsum T^n) = 2
     \qand \pi_1(T^n \connsum T^n) \cong \Z^n * \Z^n.
  \]
  This can be seen as follows: The fundamental group is not
  amenable; thus, $\amcat$ has to be at least~$2$. Conversely,
  because $n \geq 3$, the two punctured torus summands give
  rise to an amenable cover.
\end{example}

\begin{prop}\label{prop:pi1amcat2}
  Let $X$ be a connected CW-complex, let $F$ be a subgroup family
  of~$\pi_1(X)$ and let~$\catop_F X \leq 2$. Then the fundamental
  group of~$X$ is the fundamental group of a graph of groups whose
  vertex (and edge) groups are all in~$F$.

  In particular: If $\amcat X \leq 2$, then the fundamental group of~$X$
  is the fundamental group of a graph of groups whose vertex (and edge)
  groups are all amenable.
\end{prop}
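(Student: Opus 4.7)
The plan is to apply a generalized form of the Seifert--van Kampen theorem that identifies $\pi_1(X)$ with the fundamental group of a graph of groups arising from a $2$-element open cover. Pick an open $F$-cover $\{U_1,U_2\}$ of $X$. If $\catop_F(X)=1$, then $X$ itself is an $F$-set, so $\pi_1(X)\in F$ and the conclusion holds with the trivial single-vertex graph of groups. Otherwise, since $X$ is connected, both $U_i$ are nonempty and $U_1\cap U_2\neq\emptyset$.

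First, I would construct the underlying graph $\Lambda$: it has one vertex for each path component of $U_1$ and of $U_2$, and one edge for each path component of $U_1\cap U_2$, joining the vertex corresponding to the path component of $U_1$ containing it with the vertex corresponding to the path component of $U_2$ containing it. After fixing a basepoint in each such path component together with a connecting path in $X$ to a global basepoint $x_0$, the vertex group at a path component $V$ of $U_i$ is
$$
G_V \coloneqq \im\bigl(\pi_1(V)\to\pi_1(X,x_0)\bigr),
$$
with analogous edge groups for the components of $U_1\cap U_2$, and the edge-to-vertex monomorphisms induced by the inclusions of spaces (together with conjugation along the chosen paths).

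The next step is to invoke the generalized Seifert--van Kampen theorem in its graph-of-groups formulation (Bass--Serre theory, or equivalently Brown's groupoid version of Seifert--van Kampen applied with a basepoint in each path component) to identify $\pi_1(X,x_0)$ with the fundamental group of $\Lambda$ equipped with the above vertex and edge groups. Verifying the $F$-condition is then straightforward: each vertex group lies in $F$ by the very definition of $F$-set (the conjugation ambiguity coming from the choice of connecting paths is harmless, since $F$ is closed under conjugation), and each edge group is contained, up to conjugation, in a vertex group, hence also in $F$ by closure under subgroups.

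The main obstacle is the rigorous deduction of the graph-of-groups decomposition from an arbitrary open $2$-cover whose pairwise intersection may fail to be path connected, so that the classical Seifert--van Kampen theorem does not apply verbatim. On a CW-complex this can be handled either by working with the groupoid form of Seifert--van Kampen directly, or by first replacing $\{U_1,U_2\}$ by a homotopy-equivalent ``good'' cover via the nerve/CW-approximation machinery; in either case the combinatorics of Bass--Serre theory then packages $\pi_1(X)$ as the fundamental group of the desired graph of groups. The ``in particular'' clause is the specialization to $F=\{H\leq\pi_1(X)\mid H\in\Am\}$.
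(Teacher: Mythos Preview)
Your approach is sound in outline and is in fact the alternative route the paper mentions at the end of its own proof (via the groupoid Seifert--van Kampen theorem and explicit presentations). The paper's \emph{main} argument, however, is genuinely different: rather than working downstairs with the nerve~$\Lambda$ of the cover and invoking Seifert--van Kampen, it lifts the cover to the universal covering~$\widetilde X$, observes that the nerve of the lifted cover is a forest (using $H_1(\widetilde X;\Z)=0$ together with a Leray spectral sequence argument), and then applies the structure theorem of Bass--Serre theory to the simplicial action of~$\Gamma=\pi_1(X)$ on this tree. The vertex and edge stabilisers of this action are exactly your images~$G_V$, so the resulting graph of groups coincides with yours; the difference lies in how one certifies that its fundamental group is~$\pi_1(X)$. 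The tree-action route sidesteps any issue with non-injective edge maps entirely and makes the appearance of the~$G_V$ (as stabilisers) automatic, while your route is more hands-on but needs an extra verification.

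That extra verification is the one genuine gap in your write-up. The ``generalised Seifert--van Kampen theorem'' you invoke presents~$\pi_1(X)$ as a colimit with vertex groups~$\pi_1(V)$ and edge groups~$\pi_1(E)$, with structure maps~$\pi_1(E)\to\pi_1(V)$ that are typically \emph{not} injective; this is not a graph of groups in the Bass--Serre sense. You instead propose vertex and edge groups~$G_V=\im(\pi_1(V)\to\pi_1(X))$, which does yield injective edge maps, but you must then argue that the fundamental group of \emph{this} graph of groups is still~$\pi_1(X)$. This is true --- the additional relations you impose by passing to images are words in the Seifert--van Kampen generators that lie in~$\ker(\pi_1(V)\to\pi_1(X))$ and hence already vanish in~$\pi_1(X)$, so the two presentations define the same group --- but it is not a tautology and should be said explicitly. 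The ``main obstacle'' you flag (non-path-connected intersections) is comparatively minor and is handled routinely by the groupoid formulation; the real subtlety is the passage from~$\pi_1(V)$ to~$G_V$.
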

\begin{proof}
  We argue via Bass-Serre theory: 
  Because $\catop_F X \leq 2$, there exists an $F$-cover~$U$ of~$X$
  by path-connected subsets such that the nerve~$N(U)$ is
  one-dimensional: We decompose an open $F$-cover of~$X$ by two
  sets into their path-connected components; these sets are open as
  CW-complexes are locally path-connected.
  %We then discard sets that are contained in other sets of this new cover.

  Let $\widetilde U$ be the corresponding open cover of the universal
  covering~$\widetilde X$ of~$X$ (where $\pi \colon \widetilde X
  \longrightarrow X$ denotes the universal covering map):
  \begin{align*}
    \widetilde U
    := \bigl\{ V \subset \widetilde X
    \bigm|
    & \;\text{there exists a~$W \in U$ such that}
    \\[-.5ex]
    & \;\text{$V$ is a path-connected component of~$\pi^{-1}(W)$}
    \bigr\}
  \end{align*}
  Then the nerve~$N(\widetilde U)$ is also one-dimensional. Moreover,
  $N(\widetilde U)$ is a forest: Because $H_1(\widetilde X;\Z) \cong
  0$, a Leray spectral sequence argument shows that
  $H_1(N(\widetilde U);\Z) \cong 0$~\cite[Theorem~2.1]{meshulam};
  hence, all connected components of~$N(\widetilde U)$ are trees.
  
  The fundamental group~$\Gamma$ of~$X$ acts simplicially
  on~$N(\widetilde U)$ and the stabiliser groups of the vertices
  and edges all lie in~$F$~\cite[Lemma~4.11]{LS}.

  Let $T$ be the barycentric subdivision of a connected component
  of~$N(\widetilde U)$. Then $T$ is a tree that inherits an
  involution-free simplicial action of~$\Gamma$ whose vertex and edge
  stabilisers lie in~$F$. Thus, by Bass-Serre
  theory~\cite{serretrees}, the group~$\Gamma$ is isomorphic to the
  fundamental group of the graph of groups on~$T/\Gamma$, decorated by
  these stabiliser groups.

  Alternatively, one can apply the generalised Seifert and van~Kampen
  theorem for fundamental groupoids~\cite{brownsalleh,drorfarjoun}
  and relate the resulting homotopy colimit of groupoids to fundamental
  groups of graphs of groups via explicit presentations~\cite{higgins}.
\end{proof}

In the manifold case, open amenable covers consisting of exactly two
open amenable sets can also be arranged by appropriate closed
submanifolds with common boundary~\cite[Lemma~3]{GGW}.

\begin{cor}\label{cor:amcat2}
  Let $\Gamma$ be a non-amenable group. Then the following are equivalent:
  \begin{enumerate}
  \item The group~$\Gamma$ is the fundamental group of a graph of
    groups whose vertex (and edge) groups are all amenable.
  \item If $X$ is a model of~$B\Gamma$, then $\amcat X = 2$.
  \item If $X$ is a CW-complex with~$\pi_1(X) \cong \Gamma$, then $\amcat X = 2$.
  \item There exists a connected CW-complex~$X$ that satisfies~$\pi_1(X) \cong
    \Gamma$ and $\amcat X = 2$.
  \end{enumerate}
  If $\Gamma$ is finitely presented, then these conditions are also
  equivalent to:
  \begin{enumerate}
    \setcounter{enumi}{4}
  \item There exists an oriented closed connected manifold~$M$
    that satisfies~$\pi_1(M) \cong \Gamma$ and $\amcat M = 2$.
  \end{enumerate}
\end{cor}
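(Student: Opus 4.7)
My plan is to prove this as a cycle of implications $(1) \Rightarrow (2) \Rightarrow (3) \Rightarrow (4) \Rightarrow (1)$, plus $(3) \Rightarrow (5) \Rightarrow (4)$ under the finite presentation hypothesis. Non-amenability of $\Gamma$ enters only as a lower bound: any space $X$ with $\pi_1(X) \cong \Gamma$ satisfies $\amcat X \geq 2$, since $\amcat X = 1$ would cover $X$ by a single amenable-in-$X$ set, forcing $\pi_1(X)$ to be amenable.

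The key step is $(1) \Rightarrow (2)$. Starting from a graph-of-groups decomposition $(\mathcal{G}, T)$ of $\Gamma$ with amenable vertex groups $G_v$ and edge groups $G_e$, I would build a model of $B\Gamma$ as a graph of spaces,
\[
B\Gamma \;=\; \Bigl(\coprod_{v \in V(T)} BG_v \;\sqcup\; \coprod_{e \in E(T)} BG_e \times [0,1]\Bigr)/\sim,
\]
attaching each $BG_e \times \{0\}$ and $BG_e \times \{1\}$ to $BG_{s(e)}$ and $BG_{t(e)}$ via the maps induced by the edge homomorphisms. A two-fold amenable cover of this model is then given by $U$, an open regular neighborhood of $\coprod_v BG_v$, and $V$, an open tubular neighborhood of the middle slices $\coprod_e \{1/2\} \times BG_e$. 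Each path-component of $U$ deformation retracts onto some $BG_v$, so the induced $\pi_1$-image in $\Gamma$ is a conjugate of the amenable vertex subgroup $G_v$; each component of $V$ deformation retracts onto some $BG_e$, with analogous amenable $\pi_1$-image. Hence $\amcat B\Gamma \leq 2$, and combined with the lower bound one obtains $\amcat B\Gamma = 2$.

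For $(2) \Rightarrow (3)$, given any connected CW-complex $X$ with $\pi_1(X) \cong \Gamma$, the classifying map $X \to B\Gamma$ is a $\pi_1$-isomorphism, so Remark~\ref{rem:pullbackcat} yields $\amcat X \leq \amcat B\Gamma = 2$, and the non-amenability lower bound closes the equality. The implications $(3) \Rightarrow (4)$ (take any connected $2$-complex with $\pi_1 \cong \Gamma$) and $(5) \Rightarrow (4)$ are trivial, and $(4) \Rightarrow (1)$ is precisely Proposition~\ref{prop:pi1amcat2}. When $\Gamma$ is finitely presented, $(3) \Rightarrow (5)$ follows by applying Lemma~\ref{lem:amcatmfd} to the finite two-dimensional presentation complex $P$ of $\Gamma$: since $\pi_1(P) \cong \Gamma$ and $\amcat P = 2$, for any $n \geq 4$ the lemma produces an oriented closed connected $n$-manifold $M$ with $\pi_1(M) \cong \Gamma$ and $\amcat M \leq 2$, which by the lower bound equals $2$.

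The main technical obstacle is the bookkeeping in $(1) \Rightarrow (2)$: one must verify that the $\pi_1$-image of each component of $U$ (respectively $V$) in the glued space $B\Gamma$ really equals a conjugate of the prescribed amenable vertex (respectively edge) subgroup, and not some larger subgroup produced by paths wandering through adjacent vertex stars. This is a standard graph-of-spaces fundamental group computation in the style of Bass--Serre theory, but the neighborhoods $U$ and $V$ must be chosen thin enough that their components respect the decomposition coming from the graph $T$.
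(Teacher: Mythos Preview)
Your proposal is correct and follows essentially the same route as the paper: the graph-of-spaces model for~$B\Gamma$ with its two-piece vertex/edge cover is exactly the homotopy colimit construction the paper invokes for $(1)\Rightarrow(2)$, and the remaining implications via Remark~\ref{rem:pullbackcat}, Proposition~\ref{prop:pi1amcat2}, and Lemma~\ref{lem:amcatmfd} match. The only point to tighten is that $(5)\Rightarrow(4)$ is not literally trivial, since a closed topological manifold need not carry a CW-structure; the paper handles this by citing that compact manifolds are homotopy equivalent to finite CW-complexes and using the homotopy invariance of~$\amcat$.
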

\begin{proof}
  We begin with~$(1) \Longrightarrow (2)$. Let $\Gamma$ be the
  fundamental group of a graph of groups whose vertex (and edge)
  groups are all amenable. Then the homotopy colimit~$X$ over a
  corresponding graph of classifying spaces is a model of the
  classifying space~$B\Gamma$~\cite[Section~4.1]{drorfarjoun}. Moreover, the underlying
  graph structure gives an amenable open cover of~$X$ consisting of
  two sets (namely, one associated to the vertices and one associated
  with the edges).  Therefore, $\amcat X \leq 2$. Because $\Gamma$ is
  non-amenable, we obtain~$\amcat X = 2$.  As $\amcat$ is a homotopy
  invariant (Proposition~\ref{prop:cathinv}), also all other models
  of~$B\Gamma$ have amenable category equal to~$2$.

  The implication~$(2) \Longrightarrow (3)$ follows by pulling back
  amenable open covers of models of~$B\Gamma$ along the classifying
  map (and the fact that $\Gamma$ is non-amenable).

  For~$(3) \Longrightarrow (4)$, we only need to notice that
  for every group~$\Gamma$ there exists a CW-complex whose fundamental
  group is isomorphic to~$\Gamma$. 
  
  The implication~$(4) \Longrightarrow (1)$ is covered by
  Proposition~\ref{prop:pi1amcat2}.
  
  For the implication~$(4) \Longrightarrow (5)$, we apply
  Lemma~\ref{lem:amcatmfd}. The implication~$(5) \Longrightarrow (4)$
  is a consequence of the fact that every compact manifold is homotopy
  equivalent to a finite CW-complex~\cite{kirbysiebenmann,siebenmann}
  and that $\amcat$ is homotopy invariant
  (Proposition~\ref{prop:cathinv}).
\end{proof}

This proposition also holds in the case of general isq-classes
of groups (instead of~$\Am$), under the assumption that $\Gamma$ is
not contained in this class.

\begin{cor}\label{cor:amcatnot2}
  Let $M$ be an oriented closed connected manifold whose fundamental
  group is in the following list:
  \begin{enumerate}
  \item Non-amenable groups with Serre's property~FA.
  \item Fundamental groups of oriented closed connected aspherical
    manifolds of dimension at least~$2$ with positive simplicial volume.
  \item
    Groups that have a non-zero $L^2$-Betti number in some degree~$\geq 2$.
  \end{enumerate}
  Then~$\amcat M \geq 3$.
\end{cor}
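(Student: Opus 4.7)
The plan is to invoke Corollary~\ref{cor:amcat2} and reduce the inequality $\amcat M \geq 3$ to showing, for $\Gamma := \pi_1(M)$, that $\Gamma$ is non-amenable and does not split as the fundamental group of a graph of groups whose vertex and edge groups are all amenable. Non-amenability of $\Gamma$ is part of the hypothesis in (1); in (2) it follows from Example~\ref{exa:zerosimvol} applied to the aspherical target (an amenable fundamental group of dimension~$\geq 2$ would force vanishing simplicial volume); in (3) it follows from the Cheeger-Gromov vanishing of $L^2$-Betti numbers of amenable groups in positive degree. A non-amenable fundamental group already forces $\amcat M \geq 2$, so in each case the only remaining task is to exclude the value $\amcat M = 2$ by ruling out an amenable graph-of-groups decomposition of $\Gamma$.

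For case (1), I would appeal to Bass-Serre theory: any non-trivial graph-of-groups decomposition of $\Gamma$ produces a simplicial action of $\Gamma$ on its Bass-Serre tree without edge inversions, and Serre's property FA then forces $\Gamma$ to fix a vertex. Consequently $\Gamma$ would embed into a conjugate of one of the amenable vertex groups, and since amenability passes to subgroups, $\Gamma$ itself would be amenable, contradicting the hypothesis.

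For case (2), let $N$ be the oriented closed connected aspherical manifold with $\pi_1(N) \cong \Gamma$ and $\sv N > 0$. The asphericity of $N$ makes it a model for $B\Gamma$, so if $\Gamma$ were a graph of amenable groups, the implication~(1)$\Rightarrow$(2) of Corollary~\ref{cor:amcat2} would yield $\amcat N = 2$. On the other hand, Corollary~\ref{cor:catsimvol} applied to $N$ gives $\amcat N = \dim N + 1 \geq 3$ because $\dim N \geq 2$ and $\sv N > 0$, contradiction.

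For case (3), I would run the same reduction: if $\Gamma$ were the fundamental group of a graph of amenable groups, then the homotopy colimit model for $B\Gamma$ would carry an amenable open cover of cardinality~$2$, and Sauer's theorem on the vanishing of $L^2$-Betti numbers under amenable covers (or, alternatively, a direct $L^2$ Mayer-Vietoris argument applied to the pushout decomposition, combined with Cheeger-Gromov vanishing for the amenable vertex and edge groups) would force $b^{(2)}_n(\Gamma) = 0$ for every $n \geq 2$, contradicting the hypothesis. The subtlest step is this $L^2$-theoretic vanishing, since one must ensure that the relevant reference covers the possibly non-cocompact model of $B\Gamma$ arising from the graph-of-groups construction; but this is well documented in the literature on $L^2$-invariants of groups acting on trees.
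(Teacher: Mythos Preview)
Your proposal is correct and follows essentially the same route as the paper: both reduce via Corollary~\ref{cor:amcat2} and then handle the three cases with, respectively, Bass--Serre theory and property~FA, the Gromov vanishing theorem (via Corollary~\ref{cor:catsimvol}), and Sauer's $L^2$-vanishing theorem. The only cosmetic difference is that the paper phrases the reduction as ``show $\amcat(B\Gamma)\geq 3$'' rather than ``rule out an amenable graph-of-groups splitting,'' but these are two sides of the same equivalence in Corollary~\ref{cor:amcat2}; your explicit verification of non-amenability in cases~(2) and~(3) and your caveat about the applicability of Sauer's theorem are welcome details the paper leaves implicit.
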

\begin{proof}
  Let $\Gamma := \pi_1(M)$ and let $X$ be a model of~$B\Gamma$. In view
  of Corollary~\ref{cor:amcat2}, we only need to show that $\amcat X \geq 3$.
  \begin{enumerate}
  \item This is immediate from the definition of property~FA and
    Bass-Serre theory~\cite{serretrees}.
  \item In the simplicial volume case, this is a consequence of the vanishing
    theorem in bounded cohomology
    (Theorem~\ref{grom:van:thm}).
  \item In the $L^2$-Betti number case, this is a consequence of Sauer's
    vanishing theorem~\cite[Theorem~C]{sauerminvol}\cite{gromovmetric}.
    \qedhere
  \end{enumerate}
\end{proof}

Corollary~\ref{cor:amcatnot2} can, for instance, be applied to
infinite groups with property~(T)~\cite{watatani}, to the
examples listed in Example~\ref{exa:possimvol}, and
to~$F_2 \times F_2$ (which has non-zero second $L^2$-Betti number).

Using the Berstein class, Dranishnikov and Rudyak showed the
following~\cite[Theorem~5.2]{DR}: If $M \geqone N$ and
$\pi_1(M)$ is free, then also $\pi_1(N)$ is free. 
In the setting of amenable category, this translates into the
following question:

\begin{quest}\label{q:DRam}
  Let $M$, $N$ be oriented closed connected manifolds with~$M \geqone N$
  and let $\pi_1(M)$ be the fundamental group of a graph of groups
  all of whose vertex (and edge) groups are amenable. Is then also
  $\pi_1(N)$ the fundamental group of a graph of groups all of
  whose vertex (and edge) groups are amenable?
\end{quest}

The strategy of proof of Dranishnikov and Rudyak is based on the
characterisation of free groups in terms of cohomological dimension
and thus does not seem to have a straightforward counterpart in the
case of bounded cohomology and amenable category. However, a positive
answer to the generalised monotonicity problem
(Question~\ref{q:ammono}) for domain manifolds of amenable category
equal to~$2$ would combine with Corollary~\ref{cor:amcat2} to give an
affirmative answer to Question~\ref{q:DRam}.

%%%%%%%%%%%%%%%%%%%%%%%
\section{Degree-one maps and categorical invariants}\label{sec:degree:one}

In this section, we focus on Question~\ref{q:ammono}, i.e., on the
generalisation of Rudyak's problem (Question~\ref{quest:rudyak}). 
By now there are several positive results on Rudyak's
question~\cite{Rudyak:question, Rudyak:maps:new, DS:connected1,
  DS:rudyak:20}. For instance, Rudyak showed that the question has
a positive answer for manifolds of dimension at most~$4$~\cite{Rudyak:maps:new}.
Moreover, it is also true if $N$ has
$\LS$-category at most~$3$. Indeed, if $\lscat (M) = 2$,
then $M$ is a homotopy sphere~\cite{James}. This then implies that $N$
is also a homotopy sphere and thus has LS-category~$2$~\cite{Rudyak:maps:new}.

Moreover, a good source of degree-one maps is provided by collapsing maps from
the connected sum onto one of its summands. It has been proved by Dranishnikov
and Sadykov~\cite{DS:connected1, DS:connected2} that Rudyak's question
in this situation always has a positive answer. We refer the reader
to the literature~\cite{Rudyak:maps:new, DS:rudyak:20} for examples of higher
connected manifolds that satisfy Rudyak's question.

Finally, it is worth mentioning that Rudyak's question
is answered affirmatively when the $\LS$-category of~$N$ is
maximal~\cite{Rudyak:question}. This is our source of inspiration for
studying the generalised Rudyak's question for classes of groups
in the next section. Indeed, by using
simplicial volume it is easy to prove that if $N$ has positive
simplicial volume, then $N$ satisfies Rudyak's question:

\begin{lemma}\label{lemma:volsim:pos:rudyak}
  If $M$ and $N$ are 
  oriented closed connected manifolds with $M \geqone N$ and 
  $\sv N > 0$, then $\lscat (M) \geq \lscat (N)$.
\end{lemma}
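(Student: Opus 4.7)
The plan is to observe that both $M$ and $N$ have maximal $\LS$-category, so the inequality $\lscat(M) \geq \lscat(N)$ becomes a trivial equality. The key ingredient is the monotonicity of simplicial volume under degree-one maps (Proposition~\ref{prop:degree:sv}) combined with Gromov's vanishing theorem (through Corollary~\ref{cor:catsimvol}).

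More concretely, I would first apply Proposition~\ref{prop:degree:sv} to the assumption $M \geqone N$: since $\sv N > 0$, the inequality $\sv M \geq \sv N$ forces $\sv M > 0$ as well. Note also that $M \geqone N$ implies $\dim M = \dim N =: n$. Next, Corollary~\ref{cor:catsimvol} yields
\[
  \amcat M = n+1 \qand \amcat N = n+1.
\]

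Now I would combine this with the universal chain of inequalities from Remark~\ref{rem:F-cat:leq:lscat}:
\[
  \amcat X \leq \lscat X \leq \dim X + 1,
\]
valid for any CW-complex $X$ (and closed topological manifolds are homotopy equivalent to finite CW-complexes, so these inequalities apply to both $M$ and $N$). Substituting $X = M$ and $X = N$ and using $\amcat M = \dim M + 1 = n+1 = \dim N + 1 = \amcat N$, we are forced to have $\lscat M = n+1 = \lscat N$. Hence $\lscat M \geq \lscat N$ holds with equality.

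There is no real obstacle here: the proof is essentially a one-line consequence of already-established facts. The only mild subtlety is ensuring that the upper bound $\lscat X \leq \dim X + 1$ is applicable to $M$ as a topological manifold, which is justified by the homotopy-invariance of $\lscat$ together with the existence of a CW-structure of the correct dimension on a closed manifold.
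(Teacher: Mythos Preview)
Your proof is correct and follows essentially the same route as the paper: use Proposition~\ref{prop:degree:sv} to get $\sv M>0$, apply Corollary~\ref{cor:catsimvol} to both manifolds, and sandwich $\lscat$ between $\amcat$ and $\dim+1$ via Remark~\ref{rem:F-cat:leq:lscat}. The paper's version is terser (it writes $\lscat(N)=n+1$ directly ``by Corollary~\ref{cor:catsimvol}'', leaving the sandwich implicit), but the argument is the same.
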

\begin{proof}
  By Corollary~\ref{cor:catsimvol}, we have that $\lscat (N) = n +1$.
  Hence, by using Proposition~\ref{prop:degree:sv} and again
  Corollary~\ref{cor:catsimvol}, it is readily seen that also $\lscat
  (M) = n+1$, whence the thesis.
\end{proof}

\begin{rem}\label{rem:simvol:works:for:G:not:only:LS}
Notice that the proof of Lemma~\ref{lemma:volsim:pos:rudyak}
 also applies to the following situation: Let $\mathcal{G}$ be an isq-class of
 groups with $\mathcal{G} \subset \Am$. Then, for all oriented closed connected 
 manifolds $M$ and $N$ with $M \geqone N$ and $\sv N > 0$, we have
 $\gcat (M) \geq \gcat (N)$. This observation leads to the study of the 
 generalised monotonicity problem in the following section.
\end{rem}

\subsection{The monotonicity problem}

We will now extend Rudyak's question~\ref{quest:rudyak} to the
setting of categories with respect to isq-classes of amenable
groups:

\begin{quest}[(generalised) monotonicity problem]\label{q:ammono:new}
  Let $\mathcal G$ be an isq-class of groups with~$\mathcal{G} \subset \Am$. 
  Does the following hold for
  all oriented closed connected manifolds~$M$ and~$N$:
  \[ M \geqone N \Longrightarrow \gcat M \geq \gcat N \text{\;?}
  \]
\end{quest}

\begin{rem}\label{rem:gen:rud:quest:true:leq:2}
  It is immediate to check that the previous question is always true
  if $\gcat(N) \leq 2$. Indeed, if $\gcat(N) = 1$, there is nothing
  to prove. On the other hand, if
  $\gcat(M) = 1$, then $\pi_1(M) \in
  \, \mathcal{G}$. Since $\mathcal{G}$ is closed under quotients, we
  have that also $\pi_1(N) \in \, \mathcal{G}$
  (Remark~\ref{rem:degree1maps:epi:mono}). This shows that $\gcat (N)
  = \gcat(M) = 1$.
\end{rem}

%%%%%%%%%%%%%%%%%%%%%%%%%%%%

\subsection{Low-dimensional manifolds}

We answer the monotonicity problem (Question~\ref{q:ammono})
affirmatively in dimensions~$2$ and~$3$.

\begin{prop}\label{prop:gen:R:quest:surfaces}
  Let $\mathcal G$ be an isq-class of groups with~$\mathcal{G} \subset \Am$.
  Then all oriented
  closed connected surfaces $M$ and $N$ with $M \geqone N$
  satisfy $\gcat (M) \geq \gcat(N)$.
\end{prop}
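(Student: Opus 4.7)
The plan is to argue by the classification of oriented closed connected surfaces, reducing each case to machinery already developed in the paper. First, since $\dim M = \dim N = 2$, the dimension bound from Remark~\ref{rem:F-cat:leq:lscat} gives $\gcat(M), \gcat(N) \leq 3$, while Remark~\ref{rem:gen:rud:quest:true:leq:2} already handles every case with $\gcat(N) \leq 2$. So the only situation that requires work is $\gcat(N) = 3$, where it suffices to prove $\gcat(M) = 3$.

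By the classification, $N$ is one of $S^2$, $T^2$, or $\Sigma_g$ with $g \geq 2$. Since $\pi_1(S^2) = \{e\}$ lies in every isq-class, $\gcat(S^2) = 1$, so the assumption $\gcat(N) = 3$ rules out $N = S^2$. If $N = \Sigma_g$ with $g \geq 2$, then Example~\ref{exa:possimvol} gives $\sv N > 0$, so Proposition~\ref{prop:degree:sv} yields $\sv M \geq \sv N > 0$; the simplicial-volume lower bound from Remark~\ref{rem:simvol:works:for:G:not:only:LS} (a direct combination of Theorem~\ref{grom:van:thm} with the dimension bound) then forces $\gcat(M) = 3$, as needed.

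The subtle case is $N = T^2$, since $\gcat(T^2)$ genuinely depends on whether $\Z^2 \in \mathcal{G}$; for instance, when $\mathcal{G} = \{e\}$ one has $\gcat(T^2) = \lscat(T^2) = 3$, whereas for $\mathcal{G} = \Am$ one only has $\gcat(T^2) = 1$. Here I would invoke Remark~\ref{rem:degree1maps:epi:mono}: any degree-one map $M \to T^2$ induces an epimorphism $\pi_1(M) \twoheadrightarrow \Z^2$, which immediately excludes $M = S^2$. Consequently, $M$ is either $T^2$ itself, giving $\gcat(M) = \gcat(N)$ trivially, or $M = \Sigma_g$ with $g \geq 2$, in which case the simplicial-volume argument from the previous paragraph applies again and yields $\gcat(M) = 3 \geq \gcat(N)$.

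I expect the $T^2$-case to be the main conceptual obstacle, because the value of $\gcat(T^2)$ is not universal; the key trick is that once $\pi_1$-surjectivity has narrowed $M$ down to either $T^2$ or a higher-genus surface, the precise value of $\gcat(T^2)$ is no longer needed to conclude. Routine verifications that I would not dwell on include checking that $\gcat(\Sigma_g) = 3$ from the comparison map being surjective in top degree, and that the surface classification indeed applies to $M$ (a hypothesis built into the statement).
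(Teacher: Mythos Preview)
Your proof is correct and uses essentially the same ingredients as the paper: the classification of surfaces, positive simplicial volume for~$\Sigma_g$ with~$g\geq 2$, and the $\pi_1$-surjectivity of degree-one maps. The paper organises this slightly differently by first observing (via~$H_1$) that $\Sigma_g \geqone \Sigma_h$ forces~$g\geq h$ and then reading off the claim from the tabulated values~$\gcat(\Sigma_0)=1$, $\gcat(\Sigma_1)\in\{1,2,3\}$, $\gcat(\Sigma_g)=3$ for~$g\geq 2$; your case split on~$N$ and appeal to Remark~\ref{rem:gen:rud:quest:true:leq:2} achieves the same effect.
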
 
\begin{proof}
  We use the classification of surfaces: For~$g \in \N$, we
  write~$\Sigma_g$ for ``the'' oriented closed connected surface of
  genus~$g$. Looking at~$H_1(\args;\Z)$ shows that (Remark~\ref{rem:degree1maps:epi:mono})
  \[ \fa{g,h \in \N} \Sigma_g \geqone \Sigma_h \Longrightarrow g \geq h.
  \]
  Therefore, the claim is immediate from the following computations:
  \begin{itemize}
  \item $\gcat(\Sigma_g) = 3$ for all~$g\in \N_{\geq 2}$ because
    $\sv{\Sigma_g} > 0$ (Example~\ref{exa:possimvol},
    Corollary~\ref{cor:catsimvol}) and $\mathcal{G} \subset \Am$.
  \item $\gcat(\Sigma_1) \in \{1,2,3\}$ by the estimates from Remark~\ref{rem:F-cat:leq:lscat}.
  \item $\gcat (\Sigma_0) = 1$, because $\Sigma_0$ is simply connected
    and thus a $\mathcal{G}$-set.
    \qedhere
  \end{itemize}
\end{proof}

\begin{thm}\label{thm:gen:R:quest:3:mflds}
  Let $\mathcal G$ is an isq-class of groups with~$\mathcal{G} \subset \Am$
  that contains the class of solvable groups.
  Then, for all oriented
  closed connected $3$-manifolds $M$ and~$N$ with $M \geqone N$, we have $\gcat
  (M) \geq \gcat (N)$.
\end{thm}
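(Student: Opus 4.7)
The plan is to combine the known computation of the amenable category of closed oriented $3$-manifolds (adapted to the $\mathcal{G}$-setting by means of the assumption that $\mathcal{G}$ contains the solvable groups) with classical results on degree-one maps in dimension three, in particular, Wang's theorem that, when $M\geqone N$, the prime decomposition of $N$ is dominated summand-by-summand by that of $M$. By Remark~\ref{rem:gen:rud:quest:true:leq:2}, the statement is immediate whenever $\gcat(N)\leq 2$, and Remark~\ref{rem:F-cat:leq:lscat} forces $\gcat(N)\leq 4$. So only the cases $\gcat(N)\in\{3,4\}$ need to be addressed.

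For $\gcat(N)=4$, I would first show that $\sv N>0$. If $\sv N=0$, geometrisation expresses $N$ as a connected sum of graph manifolds, and for any such $N$ there is an explicit open cover by three $\mathcal{G}$-sets. Concretely, each Seifert piece fibres over a $2$-orbifold whose cone points can be separated by three disks, so the preimage in the Seifert piece of each disk has $\pi_1$-image contained in a cyclic subgroup, hence solvable and in $\mathcal{G}$; the JSJ gluing tori and the connected-sum necks can be absorbed into one of the three pieces. Thus $\gcat(N)=4$ implies $\sv N>0$, whence Proposition~\ref{prop:degree:sv} gives $\sv M>0$; together with Corollary~\ref{cor:catsimvol} and Remark~\ref{rem:F-cat:leq:lscat} (using $\mathcal{G}\subset\Am$), this yields $\gcat(M)=4=\gcat(N)$.

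For $\gcat(N)=3$, I would argue by contradiction, assuming $\gcat(M)\leq 2$. The Bass-Serre argument of Proposition~\ref{prop:pi1amcat2} applies verbatim to the subgroup family $\mathcal{G}_{\pi_1(M)}$, so either $\pi_1(M)\in\mathcal{G}$ (in which case the surjection $\pi_1(M)\twoheadrightarrow\pi_1(N)$ and quotient-closure of $\mathcal{G}$ force $\gcat(N)=1$, contradiction), or $\pi_1(M)$ is the fundamental group of a non-trivial graph of $\mathcal{G}$-groups. In the latter case, the $3$-manifold refinement of this classification, together with Kneser-Milnor, identifies $M$ with a non-trivial connected sum $M_1\connsum\dots\connsum M_k$ where each $\pi_1(M_i)\in\mathcal{G}$. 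By Wang's theorem on degree-one maps, each prime summand of $N$ is dominated by some $M_i$, hence has fundamental group in $\mathcal{G}$. Thus $\pi_1(N)$ is a free product of $\mathcal{G}$-groups, so $\gcat(N)\leq 2$, contradicting $\gcat(N)=3$.

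The main obstacle is the last step: transferring the ``connected sum of $\mathcal{G}$-summands'' structure from $M$ to $N$. Purely group-theoretic methods fail, since quotients of free products of $\mathcal{G}$-groups need not themselves be free products of $\mathcal{G}$-groups; one genuinely needs the geometric topology of $3$-manifolds, specifically the uniqueness of the Kneser-Milnor decomposition and its controlled behaviour under degree-one maps.
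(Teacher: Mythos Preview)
Your approach is correct but is not the paper's main argument. The paper organises the proof around the \emph{Kodaira dimension}~$\kappa^t$ of the domain manifold: it does a case analysis on~$\kappa^t(M)$ and invokes the monotonicity theorem~$\kappa^t(M)\geq\kappa^t(N)$ of Neofytidis and Zhang (Theorem~\ref{thm:kodaira}) to control~$\gcat(N)$ in each case. You instead do a case analysis on~$\gcat(N)$ and, in the crucial case~$\gcat(N)=3$, $\gcat(M)\leq 2$, appeal directly to Wang's lemma on the behaviour of prime decompositions under degree-one maps. This is precisely the alternative proof sketched in Remark~\ref{rem:3mfldsdirect}, which the paper attributes to Kotschick. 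Both routes ultimately rest on substantial $3$-manifold topology; the Kodaira-dimension route packages it into a single black-box monotonicity statement, while yours is more hands-on and avoids introducing~$\kappa^t$ at the cost of invoking Wang's result explicitly.

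Two points in your write-up deserve tightening. First, the passage from~$\gcat(M)=2$ to ``all prime summands of~$M$ have~$\pi_1\in\mathcal{G}$'' is more direct via the connected-sum formula (Proposition~\ref{prop:connected:sum}) together with the computation that \emph{prime} $3$-manifolds never have~$\gcat=2$ (Remark~\ref{rem:solv3}): if some prime summand had~$\gcat\geq 3$, the connected sum would too. Your detour through Proposition~\ref{prop:pi1amcat2} and an unspecified ``$3$-manifold refinement'' reaches the same conclusion but is circuitous, and the refinement step (passing from a graph-of-$\mathcal{G}$-groups splitting of~$\pi_1(M)$ to a statement about prime summands) is not as automatic as you suggest. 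Second, the version of Wang's lemma you need---that each \emph{prime} summand~$N_j$ is dominated by a single \emph{prime} summand~$M_i$, not merely by a sub-connected-sum---is the right one, but you should state it precisely and note that it uses irreducibility of~$N_j$ to pinch the separating spheres; you correctly flag this as the main obstacle.
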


Before proving this theorem we need some preparation. First, we mention
the behaviour of the $\mathcal{G}$-category under connected sums (see
also Corollary~\ref{cor:conn:sum:gen:R:quest} below).

\begin{prop}[{\cite[Lemma~1]{GGW}}]\label{prop:connected:sum}
  Let $\mathcal G$ be an isq-class of groups with $\mathcal G \subset \Am$ and 
  let $n \geq 3$. Let
  $M_1$ and $M_2$ be oriented closed connected $n$-manifolds with 
   $\max \{ \gcat (M_1), \gcat (M_2)\} \geq 2$. Then, we have
  $$
  \gcat (M_1 \connsum M_2) = \max \{ \gcat (M_1), \gcat (M_2)\} \ .
  $$
\end{prop}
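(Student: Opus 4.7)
Plan. The proof establishes two inequalities.

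For the lower bound $\gcat(M_1 \connsum M_2) \geq \max\{\gcat(M_1), \gcat(M_2)\}$: given a $\mathcal{G}$-cover $\{W_1, \ldots, W_K\}$ of the connected sum, I extract a $\mathcal{G}$-cover of each $M_i$ of size $K$. Since $n \geq 3$, the inclusion $M_i \setminus B_i^\circ \hookrightarrow M_1 \connsum M_2$ induces the canonical injection $\pi_1(M_i) \hookrightarrow \pi_1(M_1) * \pi_1(M_2)$, so the restrictions $W_j \cap (M_i \setminus B_i^\circ)$ form a $\mathcal{G}$-cover of $M_i \setminus B_i^\circ$ (using that $\mathcal{G}$ is closed under subgroups). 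I then absorb $B_i^\circ$ into one restricted set meeting $\partial B_i$: because $\partial B_i = S^{n-1}$ is simply connected for $n \geq 3$, van~Kampen shows the absorption contributes only a trivial relation, so the enlarged set remains a $\mathcal{G}$-set by closure of $\mathcal{G}$ under quotients.

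For the upper bound, set $k := \max\{\gcat(M_1), \gcat(M_2)\}$ and take $\mathcal{G}$-covers $\{U_i\}_{i=1}^k$ of $M_1$ and $\{V_j\}_{j=1}^k$ of $M_2$ (padded with repeats). By the shrinking lemma for normal spaces together with the irredundancy of minimal covers, I arrange points $p_i \in M_i$ lying only in $U_1$ (resp.\ $V_1$) and nested small open balls $B_i' \subset B_i$ around them with $\overline{B_i}$ contained in $U_1$ (resp.\ $V_1$) and disjoint from all other cover elements. Form $M_1 \connsum M_2$ by removing $B_i'^{\,\circ}$ and gluing, with a neck region $N \simeq S^{n-1}$ inside $B_1 \cup B_2$. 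For $i = 2, \ldots, k$, set $W_i := U_i \sqcup V_i \subset M_1 \connsum M_2$, viewed as a disjoint union on opposite sides of the neck; these are $\mathcal{G}$-sets since disjoint unions of $\mathcal{G}$-sets are $\mathcal{G}$-sets (as used in the proof of Proposition~\ref{prop:cat:products}).

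The construction of $W_1$, which must cover the remaining region $U_1 \cup N \cup V_1$, is the crux of the argument. The naive union $(U_1 \setminus \overline{B_1'^{\,\circ}}) \cup N \cup (V_1 \setminus \overline{B_2'^{\,\circ}})$ has, by van~Kampen across the simply-connected neck, fundamental group $\pi_1(U_1) * \pi_1(V_1)$; its image in $\pi_1(M_1) * \pi_1(M_2)$ is the corresponding free product of two $\mathcal{G}$-images, which is non-amenable whenever both factors are nontrivial and hence lies outside $\mathcal{G} \subset \Am$. Following the strategy of \cite{GGW}, the resolution is to pre-refine the covers so that $U_1$ and $V_1$ may be replaced by contractible neighborhoods of $B_1$ and $B_2$, their original content being redistributed among $U_2, \ldots, U_k$ and $V_2, \ldots, V_k$. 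With this refinement, $W_1$ deformation retracts onto $N \simeq S^{n-1}$, which is simply connected for $n \geq 3$, so $W_1$ is a $\mathcal{G}$-set. This redistribution step is the main technical difficulty and relies essentially on $n \geq 3$, i.e.\ on the high connectivity of the neck sphere.
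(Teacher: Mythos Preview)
Your outline follows the two–inequality structure of the argument in \cite{GGW}, which is exactly what the paper invokes (the paper gives no independent proof, only the remark that \cite[Lemma~1]{GGW} carries over verbatim from $n=3$ to all $n\geq 3$ and from ``both $\gcat\geq 2$'' to ``$\max\geq 2$'').

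There is, however, a genuine gap in your upper-bound step. The ``redistribution'' you propose---replacing $U_1$ by a contractible neighbourhood of $B_1$ and absorbing its former content into $U_2,\dots,U_k$---is impossible in general. Take $M_1=T^n\connsum T^n$ with $n\geq 3$, so $\amcat(M_1)=2$. If $\{U_1,U_2\}$ were an amenable cover with $U_1$ simply connected, van~Kampen would force $\im\bigl(\pi_1(U_2)\to\pi_1(M_1)\bigr)=\pi_1(M_1)\cong\Z^n*\Z^n$, which is not amenable; hence no such refinement exists and there is nowhere for the content of $U_1$ to go. The actual argument does not shrink $U_1$ or $V_1$ at all. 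One extends $U_1$ across the neck by only a thin collar $B_2^+\setminus B_2^\circ$ on the $M_2$-side (which deformation retracts to~$S^{n-1}$ and hence contributes nothing to the $\pi_1$-image), obtaining a $\mathcal G$-set~$\widetilde U_1$; similarly one forms~$\widetilde V_1$. Then one pairs the two covers with an index \emph{shift} so that $\widetilde U_1$ and $\widetilde V_1$ never sit in the same $W_j$: for instance $W_1=\widetilde U_1\sqcup V_2$, $W_k=U_k\sqcup\widetilde V_1$, and $W_j=U_j\sqcup V_{j+1}$ otherwise (padding the shorter cover by~$\emptyset$). The hypothesis $\max\{\gcat(M_1),\gcat(M_2)\}\geq 2$ is precisely what makes a nontrivial shift available.

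A smaller point on the lower bound: the absorption of $B_i^\circ$ into a single restricted set is more delicate than your van~Kampen sentence suggests. If $W_{j_0}\cap(M_i\setminus B_i^\circ)$ meets $\partial B_i$ in several components, adjoining the ball can connect previously separate pieces and create loops whose image in $\pi_1(M_i)$ is not a priori contained in one $\mathcal G$-subgroup; the simple connectivity of $S^{n-1}$ controls the intersection only when the restricted set already contains a full collar of~$\partial B_i$. This is repaired by first using a Lebesgue-number argument to locate a small ball entirely inside a single $W_{j_0}$ and organising the restriction/absorption around that ball.
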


%\mmcomm{In fact the closure under quotient might be unnecessary here,
%but probably we should decide to assume these hypotheses from the
%very beginning}

\begin{proof}
  Notice that the proof~\cite[Lemma~1]{GGW} for $3$-manifolds such
  that both $\gcat (M_1)$ and $\gcat (M_2) \geq 2$ applies
  \emph{verbatim} for all manifolds of dimension greater than or equal
  to $3$ such that $\gcat (M_1), \gcat (M_2) \geq 2$. Moreover, the
  very same proof~\cite[Lemma~1]{GGW} can be also extended to the case
  in which either $\gcat (M_1)$ or $\gcat (M_2)$ is equal to~$1$.
\end{proof}

\begin{rem}\label{rem:connected:sum}
  If $\max \{ \gcat (M_1), \gcat (M_2)\} = 1$, then the value of
  $\gcat (M_1 \connsum M_2)$ depends both on $\mathcal{G}$ and the
  manifolds~$M_1$, $M_2$. For instance if $\mathcal{G} \subset \Am$ is
  an isq-class of groups, then $\gcat (M_1 \connsum M_2) = 2$ if and only
  if $\pi_1(M_1) \ast \pi_1(M_2)$ is a non-trivial free product of
  amenable groups.
\end{rem}

As a corollary, we get some instances for the validity of the
monotonicity problem (Question~\ref{q:ammono}), similar to
corresponding results for LS-category~\cite{DS:connected1}.  Recall
that given two oriented closed connected $M_1$ and $M_2$ of the same
dimension, there always exists a degree-one map~$M_1 \connsum M_2 \to
M_i$ for $i \in \, \{1, 2\}$.  Hence,
we have the following:

\begin{cor}\label{cor:conn:sum:gen:R:quest}
  Let $\mathcal G$ be an isq-class of groups and 
  let $M_1$ and $M_2$ be 
  oriented closed connected $n$-manifolds with $n \geq 2$. Then $M_1
  \connsum M_2 \geqone M_i$ and $\gcat (M_1 \connsum M_2) \geq \gcat
  (M_i)$ for both~$i \in \{1, 2\}$.
\end{cor}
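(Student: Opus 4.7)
The corollary splits into two assertions, both of which follow readily from material already in the paper. For the existence of a degree-one map $M_1 \connsum M_2 \to M_i$, I would use the standard collapsing construction: write
\[
M_1 \connsum M_2 = (M_1 \setminus \mathring D^n) \cup_{S^{n-1}} (M_2 \setminus \mathring D^n),
\]
and define $f \colon M_1 \connsum M_2 \to M_i$ to be the identity on $M_i \setminus \mathring D^n$ and to collapse the complementary piece $M_{3-i} \setminus \mathring D^n$ to a single interior point of the missing disk (extended continuously across the gluing sphere). Since $f$ restricts to a homeomorphism on the open dense subset $M_i \setminus \mathring D^n$, it pulls a generator of $H^n(M_i;\Z)$ back to a generator of $H^n(M_1 \connsum M_2;\Z)$, hence $\deg f = \pm 1$.

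For the inequality $\gcat(M_1 \connsum M_2) \geq \gcat(M_i)$, I would simply invoke Proposition~\ref{prop:connected:sum}. If $\max\{\gcat(M_1), \gcat(M_2)\} \geq 2$, the proposition directly yields the equality
\[
\gcat(M_1 \connsum M_2) = \max\{\gcat(M_1), \gcat(M_2)\} \geq \gcat(M_i).
\]
Otherwise $\gcat(M_1) = \gcat(M_2) = 1$, and the required inequality $\gcat(M_1 \connsum M_2) \geq 1 = \gcat(M_i)$ is automatic because, by definition, $\gcat$ of a non-empty space is at least $1$.

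There is essentially no obstacle: the corollary is just the combination of a classical degree-one construction with Proposition~\ref{prop:connected:sum}, whose content has already been established. Its purpose is to record that connected sums automatically furnish degree-one maps along which $\gcat$ is monotone, giving a useful family of examples for which Question~\ref{q:ammono:new} admits an affirmative answer.
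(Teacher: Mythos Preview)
Your argument matches the paper's for~$n \geq 3$: the degree-one collapse map is exactly what the paper has in mind (it is recalled just before the corollary), and the case split on whether $\max\{\gcat(M_1),\gcat(M_2)\} \geq 2$ followed by Proposition~\ref{prop:connected:sum} is precisely the paper's proof in that range.

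There is, however, a small gap. Proposition~\ref{prop:connected:sum} carries the hypothesis~$n \geq 3$ (and the equality it asserts genuinely fails for surfaces: for instance $\amcat(T^2 \connsum T^2) = \amcat(\Sigma_2) = 3$, while $\max\{\amcat T^2,\amcat T^2\} = 1$). Since the corollary is stated for~$n \geq 2$, you cannot invoke that proposition in dimension~$2$. The paper treats the surface case separately, appealing to Proposition~\ref{prop:gen:R:quest:surfaces}, which uses the classification of surfaces together with the simplicial-volume computation for~$\Sigma_g$ with~$g \geq 2$. Adding a sentence to this effect for~$n=2$ would close the gap; the rest of your proof is correct and aligned with the paper's.
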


\begin{proof}
  The case for $n = 2$ has been already discussed in
  Proposition~\ref{prop:gen:R:quest:surfaces}. If $n \geq 3$, by
  applying Proposition~\ref{prop:connected:sum} we have that $\gcat
  (M_1 \connsum M_2) \geq \gcat (M_i)$ for both~$i \in \{1,2\}$ if at least
  one of the manifolds~$M_1$ and $M_2$ has $\mathcal{G}$-category
  strictly larger than~$1$. However, if we have assume that $\gcat
  (M_1) = 1 = \gcat (M_2)$, then the thesis follows trivially. 
\end{proof}

As a second ingredient for the proof of Theorem~\ref{thm:gen:R:quest:3:mflds} we 
recall the notion of Kodaira dimension of $3$-manifolds introduced by
Zhang~\cite{Zhang}. We follow here the notation by
Neofytidis~\cite{Neofytidis}, which is slightly different from Zhang's
original one. To this end we first
divide Thurston's three-dimensional eight geometries into four classes
assigning to each of them a value:
\begin{itemize}
\item $- \infty \colon S^3, S^2 \times \mathbb{R}$;
\item $0 \colon \mathbb{E}^3, \mathrm{Nil}^3, \mathrm{Sol}^3$;
\item $1 \colon \mathbb{H}^2 \times \mathbb{R}, \widetilde{\textup{SL}_2(\mathbb{R})}$;
\item $\frac{3}{2}: \mathbb{H}^3$.
\end{itemize}
Given an oriented closed connected $3$-manifold $M$ we decompose it
via Milnor-Kneser's prime decomposition and then we subdivide each
piece in the prime decomposition via a geometric decomposition
(i.e., we cut along tori such that each final manifold carries one of
the eight geometries and has finite volume). We call the resulting
decomposition a $T$-\emph{decomposition} of $M$.

\begin{defi}
  The \emph{Kodaira dimension}~$\kappa^t(M)$ of an oriented closed
  connected $3$-manifold $M$ is defined as follows:
  \begin{itemize}
  \item We set $\kappa^t(M): = -\infty$ if for any $T$-decomposition of $M$
    all its pieces belong to the category $-\infty$ above;
  \item We set $\kappa^t(M) := 0$ if any $T$-decomposition of $M$ contains
    at least one piece lying in category $0$, but no pieces in the
    categories $1$ or~$\frac{3}{2}$;
  \item We set $\kappa^t(M) := 1$ if any $T$-decomposition of $M$ contains
    at least one piece lying in category $1$, but no pieces in the
    category $\frac{3}{2}$;
  \item We set $\kappa^t(M) := \frac{3}{2}$ if any $T$-decomposition of
    $M$ contains at least one piece lying in category $\frac{3}{2}$.
\end{itemize}
\end{defi}

\begin{rem}\label{rem:geom:pieces}
  Notice that if a $3$-manifold $M$ has $\kappa^t (M) \leq 0$, then
  all the irreducible pieces in its prime decomposition are closed
  geometric manifolds~\cite{Zhang}.
\end{rem}

\begin{thm}[{\cite[Theorem~6.1]{Neofytidis} and~\cite[Theorem~1.1]{Zhang}}]\label{thm:kodaira}
  Let $M$ and $N$ be oriented closed connected $3$-manifolds with~$M
  \geqone N$.  Then $\kappa^t (M) \geq \kappa^t (N)$.
\end{thm}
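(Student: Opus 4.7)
The plan is to stratify by the value of $\kappa^t(N)\in\{-\infty,0,1,3/2\}$ and verify the three non-trivial implications ($\kappa^t(N)\geq 0$) separately, each time by exhibiting a monotone invariant on the source side.

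For $\kappa^t(N)=3/2$, the argument is clean: Thurston's additivity of simplicial volume for $3$-manifolds expresses $\sv N$ as a positive multiple of the sum of hyperbolic volumes of the hyperbolic pieces in the $T$-decomposition of~$N$, so $\kappa^t(N)=3/2$ is equivalent to $\sv N > 0$. Proposition~\ref{prop:degree:sv} then yields $\sv M \geq \sv N > 0$, and hence $M$ itself must contain a hyperbolic piece, giving $\kappa^t(M)=3/2$.

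For the two remaining cases simplicial volume is uninformative, so I would first reduce to prime summands by invoking the connected-sum behaviour of degree-one maps in dimension~$3$ (Rong's lemma): for every $M\geqone N$ and every prime summand $N_j$ of $N$ some prime summand of $M$ admits a degree-one map to $N_j$. Since $\kappa^t$ of a reducible manifold is the maximum of the Kodaira dimensions of its prime summands, this reduces the statement to the irreducible case. For irreducible $N$, Remark~\ref{rem:geom:pieces} together with the geometrisation theorem means that every JSJ piece of $N$ is geometric, and a Wang-type rigidity theorem for degree-one maps between Haken manifolds allows to homotope $f\colon M\to N$ so that each JSJ piece of $N$ is hit with nonzero degree by a compact connected submanifold of $M$. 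Piece by piece I would then verify the Kodaira hierarchy $-\infty < 0 < 1 < 3/2$ using algebraic obstructions that descend through $\pi_1$-epimorphisms or split-surjections in homology (Remark~\ref{rem:degree1maps:epi:mono}): the existence of a $\mathbb{Z}^2$ subgroup obstructs $\kappa^t=-\infty$, the existence of a surface subgroup of genus~$\geq 2$ (or equivalently a surjection onto a hyperbolic orbifold group) obstructs $\kappa^t\leq 0$, and positivity of simplicial volume obstructs $\kappa^t\leq 1$.

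The central obstacle is the JSJ-rigidity step: one has to prevent the degree-one map from ``hiding'' a higher-Kodaira piece of $N$ by wrapping lower-Kodaira portions of $M$ around it, and to arrange the induced piece-wise maps to be compatible with the Seifert and hyperbolic structures. This is precisely the geography-of-degree-one-maps programme in dimension~$3$ carried out by Wang, Rong, Boileau--Wang and refined in the cited works of Neofytidis and Zhang; without this input, the elementary invariants above cannot be converted into a piece-by-piece comparison.
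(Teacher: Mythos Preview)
The paper does not give its own proof of this theorem: it is quoted from the literature (Neofytidis and Zhang) and used as a black box in the proof of Theorem~\ref{thm:gen:R:quest:3:mflds}. There is therefore nothing in the paper to compare your proposal against.

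As a sketch of the argument in the cited sources, your outline is broadly accurate. The case~$\kappa^t(N)=3/2$ via simplicial volume is exactly right. The reduction to prime summands and the piece-by-piece JSJ comparison are indeed the heart of the matter, and you correctly flag the Wang-type rigidity step as the non-elementary input that cannot be replaced by the soft invariants alone. One small caveat: the connected-sum reduction you invoke typically produces a \emph{nonzero-degree} map from some prime summand of~$M$ to each prime summand of~$N$, not necessarily a degree-one map; this is harmless here, since the obstructions you list (a~$\Z^2$ subgroup, a hyperbolic surface subgroup, positivity of simplicial volume) are all monotone under nonzero-degree maps, not just degree-one maps.
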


Finally, we will also use previous computations of categorical
invariants of $3$-manifolds:

\begin{rem}[{\cite[Corollary~5]{GGW}}]\label{rem:solv3}
   Let $\mathcal G$ be an isq-class of groups with~$\mathcal{G} \subset \Am$
  that contains the class of solvable groups.
  G\'omez-Larra\~naga, Gonz\'alez-Acu\~na, and Heil computed
  the $\mathcal{G}$-category for all prime $3$-manifolds. 
  More precisely, if $M$ is prime manifold
  with $\kappa^t (M) \leq 0$, then the fundamental group is solvable, whence
  $\gcat(M) = 1$. If $\kappa^t (M) = 1$, then the fundamental group of
  $M$ is not solvable and $\gcat (M) = 3$. Finally, if $\kappa^t(M) = \frac{3}{2}$,
  we know that that simplicial volume of $M$ is positive~\cite{Soma81} and so
  we have $\gcat(M) = 4$ (Corollary~\ref{cor:catsimvol}).
\end{rem}

We are now ready to prove Theorem~\ref{thm:gen:R:quest:3:mflds}:

\begin{proof}[Proof of Theorem~\ref{thm:gen:R:quest:3:mflds}]
  Suppose that $M \geqone N$. We begin by considering the case in
  which $M$ is prime. Notice that there are no oriented closed
  connected prime $3$-manifolds with $\gcat$ equal to
  $2$~\cite{GGH:am}\cite[Corollary~3]{GGW} (Remark~\ref{rem:solv3}).
  So in this situation we have only three cases:
  \begin{enumerate}
  \item If $\pi_1(M)$ is solvable, then we have that $\gcat (M) =
    1$ because $\mathcal G$ contains the class of all solvable groups.
    This implies that also $\pi_1(N)$ is solvable
    (Remark~\ref{rem:degree1maps:epi:mono}), whence $\gcat (N)
    =1$. In particular, $\gcat (M) \geq \gcat (N)$.
  \item If $\gcat (M) = 3$, then Theorem~\ref{grom:van:thm} shows that
    $\sv M = 0$ and so $\sv N = 0$
    (Proposition~\ref{prop:degree:sv}). Hence, we have $\gcat (N) \leq
    3$~\cite[Theorem~2]{GGH:am}\cite[Corollary~5]{GGW} (Remark~\ref{rem:solv3}).
    In particular, $\gcat (M) \geq \gcat (N)$.
  \item If $\gcat (M) = 4$, then the desired result $\gcat (M) \geq
    \gcat (N)$ is trivially true by the dimension estimate (Remark~\ref{rem:F-cat:leq:lscat}).
  \end{enumerate}
  We now assume that $M$ is not prime and we argue via Kodaira
  dimension~$\kappa^t$. We have the following four cases:
  \begin{enumerate}
  \item Let $\kappa^t(M) = -\infty$. By Remark~\ref{rem:geom:pieces}
    and Remark~\ref{rem:solv3}, we know that $M$ is the connected sum
    of oriented prime geometric manifolds with $\gcat$ equal to
    $1$. Hence, we have $\gcat (M) \leq 2$ by
    Proposition~\ref{prop:connected:sum} and
    Remark~\ref{rem:connected:sum}. If $\gcat (M) =1$ we are done as
    mentioned in the first item above. Let $\gcat (M) = 2$. By
    Theorem~\ref{thm:kodaira}, we have $\kappa^t(M) \geq
    \kappa^t(N)$. This implies that also $\kappa^t(N) =
    -\infty$. Hence, we have $\gcat (N) \leq 2 = \gcat (M)$.
  \item Let $\kappa^t(M) = 0$. By Remark~\ref{rem:geom:pieces}
    and Remark~\ref{rem:solv3}, we have again that $\gcat (M) \leq
    2$. Since we still have $\kappa^t(M) \geq \kappa^t(N)$, the very
    same proof as in the previous item applies to this situation.
  \item Let $\kappa^t(M) = 1$. Let $M = M_1 \connsum \cdots \connsum
    M_k$ be a prime decomposition. Up to reordering the pieces, we may
    assume that $M_1$ contains a piece belonging to category
    $1$. Then by applying Remark~\ref{rem:solv3}, we have $\gcat
    (M_1) = 3$~\cite[Theorem~2]{GGH:am}. Hence, Proposition~\ref{prop:connected:sum} shows that
    $\gcat (M) = 3$. Then, Theorem~\ref{grom:van:thm} and
    Proposition~\ref{prop:degree:sv} imply that $0 = \sv M \geq \sv
    N$. The vanishing of the simplicial volume of $N$ then 
    implies that $\gcat (N) \leq 3$ (Remark~\ref{rem:solv3}), whence
    $\gcat (M) \geq \gcat (N)$.
    %\mmcomm{Do you think it is clear
    %  enough the fact that at least one piece in the prime
    %  decomposition must have $\gcat$ equal to $3$? I think that the
    %  statement \cite[Corollary~5]{GGW} already contains this
    %  information rather explicitly, but in any case it would be
    %  sufficient to check the list of orientable compact $3$-manifolds
    %  with solvable fundamental group and see that none of them can
    %  support a geometry in category $1$. In particular, there exists
    %  a lemma about amalgamated products and HNN-extension claiming
    %  when the resulting group do not contain a non-Abelian free group
    %  (and so it solvable). One can easily seens that you can never
    %  get this situation by beginning with the fundamental group of a
    %  geometric piece in category $1$ (which are not solvable).}
  \item Let $\kappa^t(M) = \frac{3}{2}$. In this situation, we have
      $\gcat (M) = 4$ since $\sv M > 0$~\cite{Soma81}. Hence, the inequality $\gcat
    (M) \geq \gcat (N)$ follows from the dimension estimate
    (Remark~\ref{rem:F-cat:leq:lscat}). 
    \qedhere
\end{enumerate}
\end{proof}

\begin{rem}\label{rem:3mfldsdirect}
  There is also a way to formulate the proof of
  Theorem~\ref{thm:gen:R:quest:3:mflds} without using the Kodaira
  dimension. We are grateful to Dieter Kotschick for sharing such a
  proof with us. The rough outline is as follows: Using the
  calculations of G\'omez-Larra\~naga, Gonz\'alez-Acu\~na, and
  Heil~\cite{GGH:am}  and the computation of simplicial volume of
  $3$-manifolds, one can derive that we have for all oriented
  closed connected $3$-manifolds~$M$: 
  \begin{itemize}
  \item $\amcat M = 1$ if and only if $\pi_1(M)$ is amenable.
  \item If $\amcat M  =2$, then $M$ has at least two prime summands
    and all prime summands of~$M$ have amenable fundamental group. The converse
    also holds except for the pathological case~$\R P^3 \connsum \R P^3$
    (which has amenable category~$1$).
  \item $\amcat M = 4$ if and only if~$\|M\| > 0$.
  \item $\amcat M = 3$ if and only if all prime summands of~$M$ have
    vanishing simplicial volume and at least one prime summand has
    non-amenable fundamental group.
  \end{itemize}
  If there exists a map~$M \longrightarrow N$ between oriented closed
  connected manifolds of non-zero degree, one can then proceed by
  a case-by-case analysis for the different values of amenable category.
  The most interesting case is to exclude the option~$\amcat M =2$
  when $\amcat N  = 3$. For this case, one uses a result of
  Wang~\cite[Lemma~3.4]{wang} and basic inheritance properties of amenable
  groups. 
\end{rem}

%%%%%%%%%%%%%%%%%%%%%%%%%%%%%

\subsection{Using bounded cohomology}

We will use bounded cohomology and simplicial volume to give
sufficient conditions for a positive answer to the monotonicity
problem (Question~\ref{q:ammono}).  This approach will provide
infinite families of target manifolds satisfying the monotonicity
problem. 

\begin{rem}
  It is worth mentioning that working with bounded cohomology and
  simplicial volume, one can also introduce the notion of
  $\ell^1$-\emph{invisible} manifolds~\cite{Loeh:ell1}. Recently it
  has been proved~\cite{FM:Grom, LS, Frigerio:ell1} that a sufficient
  condition for an oriented closed connected $n$-manifold to be
  $\ell^1$-invisible is having~$\amcat \leq n$. Hence, an argument
  similar to the one in Lemma~\ref{lemma:volsim:pos:rudyak} 
  (Remark~\ref{rem:simvol:works:for:G:not:only:LS}) shows that
  manifolds that are not $\ell^1$-invisible satisfy the monotonicity
  problem.

  It is known that $\ell^1$-invisible manifolds have zero simplicial
  volume, but the converse implication is still unknown.  For
  manifolds of dimension at most~$3$, the vanishing of simplicial
  volume and $\ell^1$-invisibility are equivalent; this can be seen
  via the computation of amenable category~\cite{GGH:am} or via
  classification/geometrisation and the inheritance
  properties~\cite[Example~6.7]{Loeh:ell1} of $\ell^1$-invisibility.
  In this article, we prefer to stick to the case of simplicial volume
  because it is of wider interest.
\end{rem}

Recall that a degree-one map $f \colon M \to N$ induces a
monomorphism in cohomology $H^\bullet(f) \colon H^\bullet(N) \to
H^\bullet(M)$ (Remark~\ref{rem:degree1maps:epi:mono}). Then, we have
the following:

\begin{prop}\label{prop:gen:rud:conj:true:1}
  Let $\mathcal G$ be an isq-class of groups with~$\mathcal{G} \subset \Am$
  and let $f \colon M \to N$ be a
  degree-one map between oriented closed connected
  manifolds. Suppose that $N$ has $\efcat$ equal to~$k + 1$. Then,
  if $H^k_b(f)$ is injective, we have $\gcat(M) \geq \gcat (N)$.
\end{prop}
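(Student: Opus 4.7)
The plan is to apply Gromov's vanishing theorem (Theorem~\ref{grom:van:thm}) contrapositively: since $\mathcal{G} \subset \Am$, in order to conclude $\gcat(M) \geq k+1 = \gcat(N)$, it suffices to show that the comparison map $\comp_M^k \colon H^k_b(M) \to H^k(M)$ is non-zero. Indeed, if $\gcat(M) \leq k$, the theorem would force $\comp_M^s = 0$ for every $s \geq k$, contradicting non-vanishing at degree $k$.

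First I would unpack the hypothesis on $N$. By the definition of efficient $\mathcal{G}$-category together with the equality $\gcat(N) = k+1$, the comparison map $\comp_N^k$ is non-zero, so there exists a bounded cohomology class $\varphi \in H^k_b(N)$ with $\comp_N^k(\varphi) \neq 0$ in $H^k(N)$.

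Next I would exploit the naturality square of the comparison map with respect to $f$:
\[
\comp_M^k \circ H^k_b(f) = H^k(f) \circ \comp_N^k.
\]
Since $f$ has degree one, Remark~\ref{rem:degree1maps:epi:mono} guarantees that $H^k(f) \colon H^k(N) \to H^k(M)$ is a split monomorphism; in particular $H^k(f)(\comp_N^k(\varphi)) \neq 0$. Feeding $\varphi$ through the square therefore yields $\comp_M^k(H^k_b(f)(\varphi)) \neq 0$, so $\comp_M^k \neq 0$. The assumption that $H^k_b(f)$ is injective records additionally that the witness class $H^k_b(f)(\varphi) \in H^k_b(M)$ is itself non-trivial and can be traced back to $\varphi$, which is the structurally natural way to produce the required non-vanishing from the efficiency data on $N$.

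There is no serious obstacle to this argument: it is a one-step naturality chase. The conceptual content lies entirely in Theorem~\ref{grom:van:thm}, which provides the upper bound for $\gcat(M)$ in terms of the first vanishing degree of $\comp_M^\bullet$, and in the efficiency of $N$, which guarantees that $\comp_N^k$ is non-zero precisely in the critical degree where the degree-one hypothesis on $f$ can transport non-vanishing forward to $M$.
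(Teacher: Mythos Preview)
Your proof is correct and follows the same naturality-square argument as the paper's. Your observation that the injectivity of $H^k_b(f)$ is not strictly needed in the chase is accurate: the injectivity of $H^k(f)$ (from degree one) alone carries the non-vanishing of $\comp_N^k$ across to $\comp_M^k$, and the paper's own proof, although it records that $H^k_b(f)$ is injective, likewise does not make essential use of this hypothesis.
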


\begin{proof}[Proof of Proposition~\ref{prop:gen:rud:conj:true:1}]
  Let us consider the following commutative diagram:
  $$
  \xymatrix{
    H^k_b(N) \ar[rr]^-{H^k_b(f)} \ar[d]_-{\comp^k_N} && H^k_b(M) \ar[d]^-{\comp^k_M} \\
    H^k(N) \ar[rr]_-{H^k(f)} && H^k(M) \ . 
  }
  $$
  Since $f$ is a degree-one map, $H^\bullet(f)$ is injective
  (Remark~\ref{rem:degree1maps:epi:mono}) and the same holds true for
  $H^k_b(f)$ by assumption. Thus, the vanishing of
  $\comp^k_M$ implies the vanishing of~$\comp^k_N$. However, since by
  assumption, $\comp^k_N$ is non-trivial, we have that $\comp^k_M$ is
  also non-trivial. By Theorem~\ref{grom:van:thm}, this implies that $\gcat(M)
  \geq k + 1 = \gcat N$.
\end{proof}

We now describe sufficient conditions such that the map in the
previous result $H^k_b(f)$ is injective. First of all we see that this
is always the case in degree $2$.

\begin{cor}\label{corA:gen:rud:conj:true:1}
  Let $\mathcal{G}$ be an isq-class of groups with~$\mathcal{G} \subset \Am$. 
  If $M$ and $N$ are
  oriented closed connected manifolds with $M \geqone N$ and if $N$
  has $\efcat$ equal to~$3$, then $\gcat (M) \geq \gcat (N)$.
\end{cor}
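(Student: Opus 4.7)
The plan is to invoke Proposition~\ref{prop:gen:rud:conj:true:1} with $k=2$. Let $f \colon M \longrightarrow N$ be a continuous map of degree~$\pm 1$, which exists since $M \geqone N$. The assumption that $N$ has efficient $\mathcal{G}$-category equal to~$3$ means exactly that $\gcat(N)=3$ and that $\comp^2_N$ is non-trivial, so the proposition reduces the problem to verifying that $H^2_b(f) \colon H^2_b(N) \to H^2_b(M)$ is injective.

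By Remark~\ref{rem:degree1maps:epi:mono}, the homomorphism $\pi_1(f) \colon \pi_1(M) \twoheadrightarrow \pi_1(N)$ is surjective. Thanks to Corollary~\ref{cor:same:behaviour:pi1:spaces}, the injectivity of $H^2_b(f)$ is equivalent to the injectivity of $H^2_b(\pi_1(f))$. The corollary therefore reduces to the following purely group-theoretic statement: \emph{for every surjective homomorphism $\phi \colon \Gamma \twoheadrightarrow \Lambda$ of discrete groups, the induced map $H^2_b(\phi) \colon H^2_b(\Lambda) \to H^2_b(\Gamma)$ is injective.}

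This fact is classical (due to Bouarich, via the low-degree exact sequence in bounded cohomology associated with a group extension). A direct argument can be given as follows: if $c$ is a normalized bounded $2$-cocycle on $\Lambda$ with $\phi^{\ast} c = \delta b$ for some normalized bounded $b \colon \Gamma \to \R$, then evaluating the coboundary relation on pairs $(k,\gamma)$ with $k \in K := \ker \phi$ yields $b(k\gamma) = b(k) + b(\gamma)$. In particular, $b|_K$ is a \emph{bounded} homomorphism $K \to \R$, hence identically zero. Therefore $b$ is constant on $K$-cosets and descends to a bounded $\bar b \colon \Lambda \to \R$ with $\delta \bar b = c$, so $[c] = 0$ in $H^2_b(\Lambda)$.

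The main substantive step is thus the injectivity of $H^2_b$ along surjections in degree two; everything else is a direct application of the tools already established in the paper. Combining this injectivity with Remark~\ref{rem:degree1maps:epi:mono}, Corollary~\ref{cor:same:behaviour:pi1:spaces}, and Proposition~\ref{prop:gen:rud:conj:true:1} yields the desired inequality $\gcat(M) \geq \gcat(N)$.
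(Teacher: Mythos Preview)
Your proof is correct and follows the same route as the paper's own argument: reduce to Proposition~\ref{prop:gen:rud:conj:true:1} with $k=2$, use that a degree-one map is $\pi_1$-surjective, pass to groups via Corollary~\ref{cor:same:behaviour:pi1:spaces}, and then invoke Bouarich's injectivity of $H^2_b$ along epimorphisms. The only difference is that you additionally supply a self-contained coboundary argument for the Bouarich fact, whereas the paper simply cites it; your sketch is fine (the key point being that a bounded homomorphism~$K \to \R$ vanishes, so $b$ descends modulo the normal kernel).
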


\begin{proof}
  Let $f \colon M \to N$ be a degree-one map. In order to apply
  Proposition~\ref{prop:gen:rud:conj:true:1} we have to show that
  $H^2_b(f)$ is injective.  However, every group epimorphism induces
  an injective map on bounded cohomology of groups in degree
  $2$~\cite{Bouarich} and so this is the case for
  $H^2_b(\pi_1(f))$. Hence, by applying
  Corollary~\ref{cor:same:behaviour:pi1:spaces} we have that also
  $H^2_b(f)$ is injective. 
\end{proof}

\begin{rem}
  As mentioned in the previous section, Rudyak's conjecture for
  $\LS$-category is always true if the target manifold has $\lscat (N)
  \leq 3$. This means that the previous result should be interpreted as
  a (weaker) counterpart of the small-values-case for the generalised
  monotonicity problem.
\end{rem}

\begin{cor}\label{corB:gen:rud:conj:true:1}
  Let $\mathcal{G}$ be an isq-class of groups with~$\mathcal {G} \subset \Am$
  and let $M$ and $N$ be 
  oriented closed connected manifolds such that there exists a
  degree-one map $ M \to N$ with amenable kernel on the level of~$\pi_1$. Then, if $N$
  has $\efcat$, we have $\gcat (M) \geq \gcat (N)$.
\end{cor}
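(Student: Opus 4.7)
The plan is to reduce Corollary~\ref{corB:gen:rud:conj:true:1} directly to Proposition~\ref{prop:gen:rud:conj:true:1} by verifying its hypothesis, namely that $H^k_b(f)$ is injective, where $k+1 = \gcat(N)$.

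First I would unpack what the assumptions give us on $\pi_1$. Let $f \colon M \to N$ be the given degree-one map, so by Remark~\ref{rem:degree1maps:epi:mono} the induced map $\pi_1(f)\colon \pi_1(M)\to \pi_1(N)$ is an epimorphism. Combined with the hypothesis that $\ker \pi_1(f)$ is amenable, this puts us exactly in the situation of Gromov's mapping theorem (Theorem~\ref{map:thm}). Hence $H^\bullet_b(\pi_1(f))\colon H^\bullet_b(\pi_1(N))\to H^\bullet_b(\pi_1(M))$ is an isometric isomorphism in every degree.

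Next I would transfer this from groups to spaces. Since $M$ and $N$ are oriented closed connected manifolds and $\pi_1(f)$ induces an isomorphism on bounded cohomology of groups, Corollary~\ref{cor:same:behaviour:pi1:spaces} immediately yields that $H^\bullet_b(f)\colon H^\bullet_b(N)\to H^\bullet_b(M)$ is an isomorphism in every degree. In particular, in degree~$k$ the map $H^k_b(f)$ is injective, which is precisely the hypothesis missing in Proposition~\ref{prop:gen:rud:conj:true:1}.

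Finally, I would invoke Proposition~\ref{prop:gen:rud:conj:true:1} with this~$k$: since $N$ has $\efcat$ equal to~$k+1$, the comparison map $\comp^k_N$ is non-trivial, and the commutative square involving $\comp^k_N$, $\comp^k_M$, $H^k_b(f)$ (injective by the above), and $H^k(f)$ (injective by Remark~\ref{rem:degree1maps:epi:mono}) forces $\comp^k_M$ to be non-trivial as well. Gromov's vanishing theorem (Theorem~\ref{grom:van:thm}) then gives $\gcat(M)\geq k+1 = \gcat(N)$, as desired. There is no real obstacle here beyond assembling the right ingredients, as both the amenability of the $\pi_1$-kernel and the efficiency of $N$'s $\mathcal{G}$-category are precisely tailored to feed into Theorem~\ref{map:thm} and Proposition~\ref{prop:gen:rud:conj:true:1}, respectively.
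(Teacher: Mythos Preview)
Your proposal is correct and follows essentially the same route as the paper: verify the injectivity hypothesis of Proposition~\ref{prop:gen:rud:conj:true:1} via Gromov's mapping theorem. The only difference is a small unnecessary detour: Theorem~\ref{map:thm} as stated already applies directly to the continuous map~$f\colon M\to N$ between spaces (once you note $\pi_1(f)$ is surjective), so you can conclude that $H^\bullet_b(f)$ is an isometric isomorphism without passing through group bounded cohomology and Corollary~\ref{cor:same:behaviour:pi1:spaces}.
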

\begin{proof}
  Let $f \colon M\to N$ be such a map. 
  In order to apply Proposition~\ref{prop:gen:rud:conj:true:1}, we
  have to show that $H_b^\bullet(f)$ is injective. However,
  Theorem~\ref{map:thm} shows that under the hypothesis on~$\ker (f)$
  this is always the case.
\end{proof}

Following Bouarich~\cite{Bouarich:exact}, one can study the following
family of groups:

\begin{defi}\label{def:lambda:family}
  Let $\lex$ be the family of discrete groups~$\Lambda$ with the
  following ``left exactness'' property: For every group~$\Gamma$ and
  every group epimorphism $f \colon \Gamma \to \Lambda$, the induced
  map~$H^\bullet_b(f) \colon H^\bullet_b(\Lambda) \to
  H^\bullet_b(\Gamma)$ in bounded cohomology is injective in all
  degrees.
\end{defi} 

\begin{example}\label{example:groups:in:lambda}
  The following groups lie in~$\lex$:
  \begin{enumerate}
  \item \emph{Amenable groups}. Indeed, their bounded cohomology
    vanishes in all positive degrees~\cite{vbc}.
  \item \emph{Free groups}. Given an epimorphism $\phi \colon \Gamma
    \to F_n$ onto a free group, there exists a right inverse $\psi
    \colon F_n \to \Gamma$. Hence, the composition $$ H^\bullet_b(F_n)
    \xrightarrow{H^\bullet_b(\phi)} H^\bullet_b(\Gamma)
    \xrightarrow{H^\bullet_b(\psi)} H^\bullet_b(F_n) \
    $$
    induces the identity morphism on bounded cohomology groups in
    every dimension. This shows that $H^\bullet_b(\phi)$ is injective
    as claimed.
  \item The class~$\lex$ is stable under the following constructions: quotients
    by amenable subgroups, extension of an amenable group by an
    element of $\lex$ and free products of amenable
    groups~\cite{Bouarich:exact}.
  \item \emph{Fuchsian groups} (e.g., fundamental groups of closed
    surfaces with negative Euler characteristic)~\cite[Proposition~3.8
      and Corollary~3.9]{Bouarich:exact}.
  \item \emph{Fundamental groups of a geometric $3$-manifold}
    (e.g. Kleinian groups of finite volume). This can be shown by
    using Bouarich's result~\cite[Corollary~3.13 and
      pp.~267]{Bouarich:exact} together with Agol's proof of
    Thurston's Virtual Fibering Conjecture~\cite{Agol}.
\end{enumerate}
\end{example}

\begin{cor}\label{corC:gen:rud:conj:true:1}
  Let $\mathcal G$ be an isq-class of groups with~$\mathcal{G} \subset \Am$ 
  and let $M$ and $N$ be oriented closed
  connected manifolds with $M \geqone N$. If $N$ has $\efcat$ and
  $\pi_1(N) \in \, \lex$, then $\gcat (M) \geq \gcat (N)$.
\end{cor}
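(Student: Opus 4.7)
The plan is to reduce this to Proposition~\ref{prop:gen:rud:conj:true:1} by verifying its bounded-cohomology hypothesis directly from the assumption~$\pi_1(N) \in \lex$.

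First, I would fix a degree-one map $f \colon M \longrightarrow N$ provided by the relation~$M \geqone N$, and set $k+1 \coloneqq \gcat(N)$ using the efficiency hypothesis on~$N$. By Proposition~\ref{prop:gen:rud:conj:true:1}, it suffices to show that
\[
H^k_b(f) \colon H^k_b(N) \longrightarrow H^k_b(M)
\]
is injective. In fact, I will establish the stronger statement that $H^\bullet_b(f)$ is injective in \emph{every} degree, which makes the argument completely independent of the value of~$k$.

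Next, I would extract the information on fundamental groups: Remark~\ref{rem:degree1maps:epi:mono} guarantees that $\pi_1(f) \colon \pi_1(M) \longrightarrow \pi_1(N)$ is an epimorphism, since $f$ has degree~$\pm 1$. Because $\pi_1(N) \in \lex$ by hypothesis, the very definition of the class~$\lex$ (Definition~\ref{def:lambda:family}) forces
\[
H^\bullet_b(\pi_1(f)) \colon H^\bullet_b(\pi_1(N)) \longrightarrow H^\bullet_b(\pi_1(M))
\]
to be injective in all degrees. Passing from group cohomology back to the spaces~$M$ and~$N$ is then exactly Corollary~\ref{cor:same:behaviour:pi1:spaces}, which tells us that $H^\bullet_b(f)$ inherits injectivity from $H^\bullet_b(\pi_1(f))$.

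Finally, I would feed this into Proposition~\ref{prop:gen:rud:conj:true:1}: the injectivity of~$H^k_b(f)$ combined with $N$ having efficient $\mathcal G$-category equal to~$k+1$ yields $\gcat(M) \geq k+1 = \gcat(N)$, which is the desired conclusion. There is no serious obstacle here; the proof is essentially an assembly of Proposition~\ref{prop:gen:rud:conj:true:1}, the $\pi_1$-surjectivity of degree-one maps, the definition of~$\lex$, and Gromov's mapping-theorem corollary. The only thing one must be careful about is that we do not need to control~$\ker\pi_1(f)$ at all (as in Corollary~\ref{corB:gen:rud:conj:true:1}); the hypothesis $\pi_1(N) \in \lex$ already handles arbitrary epimorphisms onto $\pi_1(N)$.
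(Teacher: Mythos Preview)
Your proposal is correct and follows exactly the same approach as the paper, which simply states that the result follows from the definition of~$\lex$, Corollary~\ref{cor:same:behaviour:pi1:spaces}, and Proposition~\ref{prop:gen:rud:conj:true:1}. You have merely unpacked these three ingredients explicitly (together with the $\pi_1$-surjectivity of degree-one maps from Remark~\ref{rem:degree1maps:epi:mono}), so there is nothing to add.
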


\begin{proof}
This follows from the definition of $\lex$, 
Corollary~\ref{cor:same:behaviour:pi1:spaces}
and Proposition~\ref{prop:gen:rud:conj:true:1}. 
\end{proof}

\subsection{Examples of manifolds with efficient category}

In the previous section we have described some sufficient conditions
under which the monotonicity problem has a positive answer. We now
investigate how to construct manifolds having $\efcat$, where
$\mathcal{G}$ consists of amenable groups.
Recall that orientable $\PD^n_\mathbb{R}$-groups~$\Gamma$ satisfy 
$\cd \Gamma = n$ and $H^n(\Gamma) \cong \R$.

\begin{prop}\label{prop:exists:mfld:gen:rud:hyp:groups}
  Let $\mathcal G$ be an isq-class of groups with~$\mathcal{G} \subset \Am$. 
  Let $\Gamma$ be a finitely
  presented group lying in one of
  the following families:
  \begin{enumerate}
  \item hyperbolic orientable $\PD^n_\mathbb{R}$-groups for $n \geq 3$;
  \item fundamental groups of aspherical manifolds with positive
    simplicial volume;
  \item Hopfian fundamental groups of $n$-manifolds with positive 
  simplicial volume and $\gd \Gamma = n$.
  \end{enumerate}
  Then, for every $n \geq 2 \cdot \gd \Gamma + 1$, there exists an oriented
  closed connected $n$-manifold $N$ with $\pi_1(N) \cong \Gamma$ and $\efcat$.
  %In particular, if $\Gamma$ admits an oriented
  %closed connected aspherical manifold as model for $B\Gamma$, then
  %the previous result holds for every $n \geq 2 \cdot \gd \Gamma$.
\end{prop}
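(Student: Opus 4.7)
The plan is to reduce the statement to constructing, in each of the three cases, a finite simplicial model~$Y$ of $B\Gamma$ of dimension $d\coloneqq\gd\Gamma$ that has $\efcat$, and then invoke Proposition~\ref{prop:catY:catmfld}.(4). Once such a~$Y$ is produced, for every $n\geq 2d+1$ the proposition yields an oriented closed connected $n$-manifold~$N$ with $\pi_1(N)\cong\Gamma$ and efficient $\mathcal G$-category, which is exactly the claim. In every case $Y$ satisfies $\gcat(Y)\leq d+1$ by the dimension bound (Remark~\ref{rem:F-cat:leq:lscat}), so by Gromov's vanishing theorem (Theorem~\ref{grom:van:thm}) verifying efficient category reduces to showing that the top-degree comparison map $\comp^d_Y\colon H^d_b(Y)\to H^d(Y)$ is non-zero.

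The non-vanishing of $\comp^d_Y$ is established in each case as follows. In~(1), $\Gamma$ is a torsion-free hyperbolic group, hence admits a finite model $Y$ of $B\Gamma$ of dimension $d=n$ (via the Rips complex). Mineyev's theorem asserts that the comparison map of a hyperbolic group is surjective in every degree $\geq 2$, so in particular $\comp^n_Y$ is surjective; the $\PD^n_\R$ hypothesis yields $H^n(Y;\R)\cong\R\neq 0$, which forces $\comp^n_Y\neq 0$. In~(2), the aspherical closed manifold~$W$ with $\pi_1(W)\cong\Gamma$ is itself a model $Y\coloneqq W$ of $B\Gamma$ with $\dim W=d$, and positivity of $\sv W$ directly yields $\comp^d_Y\neq 0$ via Remark~\ref{rem:duality:principle}. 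In~(3), pick an oriented closed connected $n$-manifold~$W$ with $\pi_1(W)\cong\Gamma$ and $\sv W>0$, together with a finite simplicial model $Y$ of $B\Gamma$ of dimension $d=n$. The classifying map $f\colon W\to Y$ is a $\pi_1$-isomorphism, so Gromov's mapping theorem (Theorem~\ref{map:thm}) gives that $H^n_b(f)\colon H^n_b(Y)\to H^n_b(W)$ is an isomorphism; combined with the surjectivity of $\comp^n_W$ (from $\sv W>0$) and naturality of the comparison map, the composition $H^n(f)\circ\comp^n_Y$ is surjective, forcing $\comp^n_Y\neq 0$.

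It remains to check that Proposition~\ref{prop:catY:catmfld}.(4) applies. The proposition requires either $Y$ to be a model of $B\Gamma$ for a finitely presented $\Gamma$, or $\pi_1(Y)$ to be Hopfian. In all three cases $Y$ is constructed as a classifying space of~$\Gamma$, so the first option suffices; alternatively, torsion-free hyperbolic groups are Hopfian by Sela's theorem, and case~(3) has Hopfianness built into the hypothesis. This produces the desired $n$-manifold~$N$ with $\gcat(N)=\gcat(Y)=d+1$ and efficient $\mathcal G$-category, proving the proposition. The main technical input is case~(1), where the non-vanishing of $\comp^n_Y$ relies on Mineyev's surjectivity theorem for the comparison map of hyperbolic groups; this is what allows us to pass from the formal $\PD^n_\R$ condition to concrete non-vanishing of a top-degree bounded cohomology class. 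Case~(3) is a bookkeeping exercise in naturality of the comparison map, while case~(2) is essentially immediate from the duality characterisation of simplicial volume.
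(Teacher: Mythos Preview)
Your proof is correct and follows essentially the same route as the paper: reduce to showing that a $d$-dimensional model~$Y$ of~$B\Gamma$ has efficient $\mathcal G$-category (via $\comp^d_Y\neq 0$ together with the dimension bound), then apply Proposition~\ref{prop:catY:catmfld}(4). The only cosmetic difference is in case~(3): the paper cites Gromov directly for the non-vanishing of~$\comp^n_{B\Gamma}$, whereas you spell out the argument via the mapping theorem and naturality of the comparison map, which is exactly what that citation unpacks to.
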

\begin{proof}
  By Proposition~\ref{prop:catY:catmfld}.4, in all three cases it
  suffices to show that $B\Gamma$ has $\efcat$ equal to~$n+1$.
  We now consider the three cases:
  \begin{enumerate}
  \item Let $\Gamma$ be a hyperbolic orientable $\PD^n_\mathbb{R}$-group with $n \geq 3$
  and let us consider a model of $B\Gamma$.
   Then, the comparison map of $B\Gamma$ is surjective in all degrees greater
    than or equal to $2$~\cite[Theorem~3]{Mineyev}. Hence, since 
   $H^n(B\Gamma) \cong \mathbb{R}$, we have that  $\comp_{B\Gamma}^n$ is surjective and
    non-trivial. Moreover, as $n\geq 3$, we have~$\gd \Gamma = \cd \Gamma = n$. 
    This shows that $B\Gamma$ has efficient $\mathcal{G}$-category
    equal to $n+1$ (Theorem~\ref{grom:van:thm} and Remark~\ref{rem:F-cat:leq:lscat}).
  \item Let $\Gamma$ be the fundamental group of an oriented closed
    connected aspherical manifold with positive simplicial
    volume. Then, Theorem~\ref{grom:van:thm} and
    Corollary~\ref{cor:catsimvol} show that $B\Gamma$ has
    $\efcat$.
 \item Let $\Gamma$ be a Hopfian fundamental group of an oriented
  closed connected $n$-manifold $M$ with positive simplicial volume.
  Since $n = \gd \Gamma$, there exists a model of $B\Gamma$
  with $\gcat(B\Gamma) \leq n+1$ (Remark~\ref{rem:F-cat:leq:lscat}).
  Moreover, since $\sv{M} > 0$, the comparison map
   $\comp_{B\Gamma}^n$ does not vanish~\cite[Section~3.1]{vbc}.
    Hence, $B\Gamma$ has efficient $\mathcal{G}$-category
    equal to~$n+1$ (Theorem~\ref{grom:van:thm}). 
  \qedhere
\end{enumerate}
\end{proof}

%%%%%%%%%%%%%%%%%%%%%%

\subsection{Examples of manifolds that satisfy monotonicity}

We are now ready to produce explicit infinite families of examples of
target manifolds for which monotonicity holds. Recall that
$\lex$ denotes the family of groups defined in
Definition~\ref{def:lambda:family}.

\begin{thm}\label{thm:main:gen:rud:conj}
  Let $\mathcal{G}$ be an isq-class of groups with~$\mathcal{G} \subset \Am$.
  Let $\Gamma \in \lex$ be a
  finitely presented group and suppose that $\Gamma$ lies in one of the
  families of Proposition~\ref{prop:exists:mfld:gen:rud:hyp:groups}.
  Then for every $n \geq 2 \cdot \gd \Gamma + 1$, there exists an oriented
  closed connected $n$-manifold $N$ with $\pi_1(N) \cong \Gamma$ such
  that for all oriented closed connected $n$-manifolds~$M$ we have
  \[ M \geqone N \Longrightarrow \gcat M \geq \gcat N\ . 
  \]
%  Moreover, if $B\Gamma$ is an oriented closed connected manifold, we
%  can improve the result by taking $n \geq 2 \cdot \gd \Gamma$. 
\end{thm}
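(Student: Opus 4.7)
The plan is to assemble the theorem directly from two previously established results, with no new technical input required.

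First, I would apply Proposition~\ref{prop:exists:mfld:gen:rud:hyp:groups} to the given finitely presented group~$\Gamma$: since $\Gamma$ lies in one of the listed families, for every $n \geq 2 \cdot \gd\Gamma + 1$ we obtain an oriented closed connected $n$-manifold $N$ with $\pi_1(N) \cong \Gamma$ that has $\efcat$. This produces the candidate target manifold in the statement.

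Next, I would invoke Corollary~\ref{corC:gen:rud:conj:true:1}. Its hypotheses are exactly that $N$ has $\efcat$ and that $\pi_1(N) \in \lex$. The first condition is guaranteed by the previous step, and the second holds by assumption since $\pi_1(N) \cong \Gamma \in \lex$. Hence for every oriented closed connected $n$-manifold $M$ with $M \geqone N$ we conclude $\gcat M \geq \gcat N$, which is exactly the desired implication.

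There is essentially no obstacle to overcome here; the work has been front-loaded into Proposition~\ref{prop:exists:mfld:gen:rud:hyp:groups} (which produces a classifying space realisation by a manifold through the doubling construction of Proposition~\ref{prop:catY:catmfld}, and certifies efficiency of the amenable category via the non-vanishing of the comparison map in top degree) and into Corollary~\ref{corC:gen:rud:conj:true:1} (which combines Proposition~\ref{prop:gen:rud:conj:true:1}, Corollary~\ref{cor:same:behaviour:pi1:spaces}, and the definition of~$\lex$ to guarantee that the bounded cohomology map~$H_b^k(f)$ induced by a degree-one map~$f\colon M\to N$ is injective in the critical degree~$k = \gcat N - 1$). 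The only thing to check is that the dimension bound $n \geq 2\cdot\gd\Gamma + 1$ used to produce $N$ is compatible with the hypothesis, which is immediate since it matches the statement verbatim.
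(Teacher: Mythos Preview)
Your proposal is correct and matches the paper's proof essentially line for line: apply Proposition~\ref{prop:exists:mfld:gen:rud:hyp:groups} to produce~$N$ with $\efcat$, then apply Corollary~\ref{corC:gen:rud:conj:true:1} using $\pi_1(N)\cong\Gamma\in\lex$. Your additional unpacking of what those two results rely on is accurate but not needed for the argument.
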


\begin{proof}
  Since $\Gamma$ lies in the families of
  Proposition~\ref{prop:exists:mfld:gen:rud:hyp:groups}, we have that
  for every $n \geq 2 \cdot \gd \Gamma + 1$, there exists an oriented
  closed connected $n$-manifold~$N$ that has $\efcat$. Because $\Gamma
  \in \, \lex$ we can apply Corollary~\ref{corC:gen:rud:conj:true:1}.
\end{proof}

As an application of the previous result, we can prove Theorem~\ref{thm:gen:rudyak:conj:intro}.

\begin{cor}\label{thm:gen:rudyak:conj:intro:new}
  Let $\mathcal{G}$ be an isq-class of groups with~$\mathcal{G} \subset \Am$. 
  Let $\Gamma$ be the fundamental group of
  an oriented closed connected hyperbolic $k$-manifold of dimension~$k
  \in \{2,3\}$. Then, for
  every~$n \geq 2 k$ there exists an oriented closed connected
  $n$-manifold~$N$ with $\pi_1(N) \cong \Gamma$ such that: For all
  oriented closed connected $n$-manifolds~$M$ we have
  \[ M \geqone N \Longrightarrow \gcat M \geq \gcat N.\]
\end{cor}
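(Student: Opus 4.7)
The strategy is to verify that the fundamental group~$\Gamma$ of an oriented closed connected hyperbolic $k$-manifold (with $k \in \{2,3\}$) satisfies all the hypotheses of Theorem~\ref{thm:main:gen:rud:conj}, and then to invoke the improved dimension bound from Remark~\ref{rem:improveddim} to pass from $n \geq 2k+1$ (given by Theorem~\ref{thm:main:gen:rud:conj} directly, since $\gd \Gamma = k$) to $n \geq 2k$.

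First, I would show that $\Gamma \in \lex$. For $k=2$, the group~$\Gamma$ is a cocompact Fuchsian group, which lies in $\lex$ by Example~\ref{example:groups:in:lambda}(4). For $k=3$, the group~$\Gamma$ is a uniform lattice in~$\mathrm{Isom}(\mathbb{H}^3)$ and hence the fundamental group of a geometric $3$-manifold, so Example~\ref{example:groups:in:lambda}(5) applies. Next, I would show that $\Gamma$ belongs to one of the families listed in Proposition~\ref{prop:exists:mfld:gen:rud:hyp:groups}: for $k=3$, $\Gamma$ is a hyperbolic orientable $\PD^3_\mathbb{R}$-group (family~(1)); for $k=2$, $\Gamma$ is the fundamental group of an oriented closed aspherical $2$-manifold of genus~$\geq 2$, which has positive simplicial volume (Example~\ref{exa:possimvol}), so family~(2) applies.

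Theorem~\ref{thm:main:gen:rud:conj} then yields, for every $n \geq 2 \cdot \gd \Gamma + 1 = 2k+1$, an oriented closed connected $n$-manifold~$N$ with $\pi_1(N) \cong \Gamma$ satisfying the desired monotonicity conclusion. To reach the sharper bound $n \geq 2k$ required by the statement, I would revisit the construction: the hyperbolic $k$-manifold~$Y$ itself is an oriented closed connected manifold model of~$B\Gamma$, so the final clause of Proposition~\ref{prop:catY:catmfld} (see also Remark~\ref{rem:improveddim}) allows the regular-neighbourhood/doubling construction to be carried out already for $n \geq 2 \dim(Y) = 2k$. The resulting manifold~$N$ retains all the properties used in the proof of Theorem~\ref{thm:main:gen:rud:conj}, namely $\pi_1(N) \cong \Gamma$, the retraction $N \to Y$ inducing a non-vanishing comparison map in degree~$k$, and hence $\efcat$ equal to~$k+1$. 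Applying Corollary~\ref{corC:gen:rud:conj:true:1} to any degree-one map~$M \to N$ (using $\Gamma \in \lex$ to ensure $H^k_b$ of the induced map on fundamental groups is injective, transported to the space level via Corollary~\ref{cor:same:behaviour:pi1:spaces}) yields $\gcat M \geq \gcat N$, finishing the proof.

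The only potential obstacle is verifying that the dimension bound really improves to $2k$ in the hyperbolic case: this requires that all ingredients used in Proposition~\ref{prop:exists:mfld:gen:rud:hyp:groups} and Theorem~\ref{thm:main:gen:rud:conj} go through when we start the embedding-and-doubling argument from the hyperbolic manifold itself rather than from an arbitrary classifying space. Since $B\Gamma$ may be chosen to be the hyperbolic $k$-manifold, the Hopfian/aspherical hypotheses of Proposition~\ref{prop:catY:catmfld} are automatic, and the comparison map in top degree is surjective by positive simplicial volume (for $k=2$) or Mineyev's result (for $k=3$), so no additional arguments are needed.
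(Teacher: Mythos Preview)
Your proof is correct and follows essentially the same route as the paper: verify that $\Gamma$ lies in one of the families of Proposition~\ref{prop:exists:mfld:gen:rud:hyp:groups} and in $\lex$ (via Example~\ref{example:groups:in:lambda}), then apply Theorem~\ref{thm:main:gen:rud:conj} together with the improved dimension bound of Remark~\ref{rem:improveddim}. The only cosmetic difference is that the paper invokes family~(2) (aspherical with positive simplicial volume) uniformly for both $k=2$ and $k=3$, rather than splitting into families~(1) and~(2) as you do.
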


\begin{proof}
  Since $\Gamma$ is the fundamental group of an oriented closed
  connected aspherical $n$-manifold with positive simplicial volume, 
  $\Gamma$ lies in the classes of
  Proposition~\ref{prop:exists:mfld:gen:rud:hyp:groups} and we can use
  the improved dimension bound
  (Remark~\ref{rem:improveddim}). Moreover, we have already seen in
  Example~\ref{example:groups:in:lambda} that $\Gamma$ also lies
  in~$\lex$. Hence, Theorem~\ref{thm:main:gen:rud:conj} applies.
\end{proof}

%%%%%%%%%%%%%%%%%%%%%%%%%%%%%%%%%%%%%%%%%%%%%%%%%%%%%%%%%%%
\section{Bounds via classifying spaces}\label{sec:catclass}

In this section, we explain how to
characterise categorical invariants for classes of groups in terms of
classifying spaces of families of subgroups -- in analogy with the
corresponding statement for~$\catop_1$ by Eilenberg and
Ganea~\cite{eilenbergganea}. In particular, this leads to
a corresponding lower bound in terms of Bredon cohomology.

%%%%%
\subsection{Classifying spaces of families}

We recall classifying spaces of families of subgroups~\cite{Lueck:survey}.

\begin{defi}
	\label{defn:classifying:spaces}
	Let $\Gamma$ be a group and let $F$ be a subgroup family of $\Gamma$.
	\begin{itemize}
		\setlength\itemsep{0em}
		\item A $\Gamma$-CW-complex $X$ has
                  \emph{$F$-restricted isotropy} if all its isotropy
                  groups lie in $F$, i.e., for every $x\in X$ one has
		\[
		\Gamma_x=\{\gamma\in \Gamma \;|\; \gamma\cdot x =x\}\in F.
		\]
		\item A \emph{model for the classifying space
                  $E_F\Gamma$ for the family of subgroups $F$} is a
                  $\Gamma$-CW-complex $X$ with $F$-restricted isotropy
                  with the following universal property: for every
                  $\Gamma$-CW-complex~$Y$ with $F$-restricted isotropy
                  there exists a unique (up to $\Gamma$-homotopy)
                  $\Gamma$-equivariant map~$Y\rightarrow X$.
	\end{itemize}
	We will use the notation $E_F\Gamma$ to denote the
        choice of a model for the classifying space (it is well
        defined up to canonical $\Gamma$-homotopy equivalence) and we
        denote by~$f_{Y,\Gamma,F}:Y\rightarrow E_F\Gamma$ a choice of a
        map given by the universal property.
       
        When $F$ is the trivial family, then $E_F\Gamma$ is denoted
        simply by $E\Gamma$ and we recover the usual model of the classifying
        space of a group $\Gamma$.
\end{defi}

\begin{rem}\label{rem:admissibility}
	When $\Gamma$ is a discrete group, a
        $\Gamma$-CW-complex~$X$ is simply a CW-complex~$X$ endowed
        with a cellular $\Gamma$-action with the following property: For each
        open cell $e \subset X$ and each $\gamma \in \, \Gamma$ such
        that $\gamma \cdot e \cap e \neq \emptyset$, the left
        multiplication by $\gamma$ restricts to the identity on
        $e$~\cite[Example~1.5]{Lueck:survey}.
	
	This property shows that if a discrete group~$\Gamma$ acts on
        a simplicial complex~$K$ via simplicial automorphisms, it is
        not true in general that $K$ has the structure of
        $\Gamma$-CW-complex. On the other hand, the induced action
        by~$\Gamma$ on the first barycentric subdivision~$K'$ of $K$
        makes $K'$ a $\Gamma$-CW-complex.
\end{rem}

The remark above suggests the following definition:

\begin{defi}
	Let $K$ be a simplicial complex and let $\Gamma$ be a discrete
        group acting on $K$ via simplicial automorphisms.  We say that
        $K$ is an \emph{admissible} $\Gamma$-\emph{simplicial complex}
        if it is a $\Gamma$-CW-complex.
\end{defi}

Classifying spaces for subgroup families have recently been used to
give a new proof of Gromov's Vanishing Theorem
(Theorem~\ref{grom:van:thm})~\cite{LS}. We will use some results of
\emph{loc.\ cit.}  to prove
Lemma~\ref{lem:cat:via:factorization:classifying:map}.

\subsection{Bounding topological complexity via classifying spaces}

Classifying spaces for subgroup families also play an important role in the
computation of the topological complexity of aspherical spaces. The
following result shows that topological complexity can be
characterised by means of the factorisation of the classifying map
$f_{E(\Gamma\times \Gamma),\Gamma\times \Gamma,\mathcal{D}}$, where
$\mathcal{D}$ is the subgroup family introduced in
Definition~\ref{def:D:TC}.

\begin{prop}[{\cite[Theorem 3.3]{FGLO}}]
	\label{prop:TC:via:factorization:classifying:map}
	Let $X$ be a connected finite aspherical CW-complex with
        fundamental group $\Gamma$.  Then $\TC(X)-1$
        coincides with the minimal integer $k\in \mathbb{N}_{\geq 0}$
        such that the classifying map
	\[
	f_{E(\Gamma\times \Gamma),\Gamma\times \Gamma,\mathcal{D}}\colon E(\Gamma \times \Gamma)\rightarrow E_{\mathcal{D}}(\Gamma \times \Gamma)
	\]
	is $(\Gamma\times \Gamma)$-homotopic to a map with values in
        the $k$-dimensional skeleton~$E_{\mathcal{D}}(\Gamma\times
        \Gamma)^{(k)}$ (of any model of $E_{\mathcal{D}}(\Gamma \times \Gamma)$).
\end{prop}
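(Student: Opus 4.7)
The plan is to reduce the statement to the general characterisation of $\catop_F$ via classifying spaces of families (Proposition~\ref{prop:introgencat}), applied to $X \times X$ with the diagonal family $\mathcal{D}$. The argument has three ingredients that line up almost mechanically once Proposition~\ref{prop:introgencat} is in hand.

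First, since $X$ is a finite aspherical CW-complex, I would use Lemma~\ref{lemma:D-top:compl:equals:top:compl} to replace $\TC(X)$ by its $\mathcal{D}$-variant, so that by Definition~\ref{def:D:TC}
\[
\TC(X) \;=\; \TC^{\mathcal{D}}(X) \;=\; \catop_{\mathcal{D}}(X \times X).
\]
Thus it is enough to prove the factorisation characterisation for $\catop_{\mathcal{D}}(X \times X) - 1$.

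Second, I would identify the universal cover appearing in Proposition~\ref{prop:introgencat}. Asphericity of $X$ gives that $\widetilde X$ is contractible with free $\Gamma$-action, so $\widetilde{X \times X} = \widetilde X \times \widetilde X$ is a contractible free $(\Gamma \times \Gamma)$-CW-complex and hence a model for $E(\Gamma \times \Gamma)$. By the universal property of $E_{\mathcal{D}}(\Gamma \times \Gamma)$, the map $f_{\widetilde{X \times X},\Gamma \times \Gamma,\mathcal{D}}$ of Proposition~\ref{prop:introgencat} then coincides up to $(\Gamma \times \Gamma)$-homotopy with the map $f_{E(\Gamma\times \Gamma),\Gamma\times \Gamma,\mathcal{D}}$ in the statement.

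Third, applying Proposition~\ref{prop:introgencat} to the space $X \times X$ with fundamental group $\Gamma \times \Gamma$ and subgroup family $\mathcal{D}$ yields exactly the claimed equivariant-skeletal characterisation of $\catop_{\mathcal{D}}(X \times X) - 1$. Combining the three steps gives the statement. The main obstacle is therefore entirely packaged into Proposition~\ref{prop:introgencat}: the delicate direction is to turn an equivariant factorisation through $E_{\mathcal{D}}(\Gamma\times \Gamma)^{(k)}$ into an open cover of $X \times X$ of cardinality $k+1$ by $\mathcal{D}$-sets. I would produce this cover by pulling back the cover of the classifying space by open stars of vertices (under a suitable barycentric model, cf.\ Remark~\ref{rem:admissibility}), descending via $(\Gamma\times\Gamma)$-equivariance to $X\times X$, and using that the isotropy of $E_{\mathcal{D}}(\Gamma\times \Gamma)$ lies in $\mathcal{D}$ to check the $\pi_1$-image condition on path-components. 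The reverse direction uses a partition of unity subordinate to an $F$-cover (shrunk disjointly via Lemma~\ref{lemma:reduce:cardinality:open:covers}) to build an equivariant map into the nerve, which then lands in the $k$-skeleton of $E_{\mathcal{D}}(\Gamma \times \Gamma)$ via the universal property.
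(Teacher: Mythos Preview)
The paper does not give its own proof of this proposition: it is simply quoted as \cite[Theorem~3.3]{FGLO}. So there is no paper proof to compare against directly.

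That said, your reduction is correct and is in fact the natural way to recover this cited result from the paper's own machinery. The three steps line up as you describe: Lemma~\ref{lemma:D-top:compl:equals:top:compl}(2) gives $\TC(X)=\catop_{\mathcal D}(X\times X)$; asphericity of~$X$ makes $\widetilde{X\times X}$ a model of~$E(\Gamma\times\Gamma)$, so the two classifying maps agree up to equivariant homotopy; and Proposition~\ref{prop:introgencat:new} applied to~$X\times X$ with the family~$\mathcal D$ yields the skeletal characterisation. In effect you are observing that Proposition~\ref{prop:TC:via:factorization:classifying:map} is the special case~$F=\mathcal D$ of Proposition~\ref{prop:introgencat:new}, which is exactly the relationship the paper suggests with the word ``Similarly'' introducing the latter.

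Your closing sketch of the ``main obstacle'' is essentially the content of Lemma~\ref{lem:cat:via:factorization:classifying:map}: the cover-to-map direction via the nerve of the lifted cover, and the map-to-cover direction via open stars in a barycentric subdivision and the admissibility argument of Remark~\ref{rem:admissibility}. One small simplification: you do not need Lemma~\ref{lemma:reduce:cardinality:open:covers} here. Starting from an $F$-cover of cardinality~$n+1$, the nerve of its lift already has dimension at most~$n$, so no preliminary disjointification is required; the paper's proof of Lemma~\ref{lem:cat:via:factorization:classifying:map} proceeds directly.
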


Using the previous result, Dranishnikov computed the topological
complexity of finitely generated geometrically finite hyperbolic
groups~\cite{Dran:hyp:groups}.

\subsection{Bounding category via classifying spaces}

Similarly, we can also characterise $F$-categories:

\begin{prop}\label{prop:introgencat:new}
  Let $X$ be a connected CW-complex and let $\widetilde{X}$ be 
  its universal covering. Let $\Gamma$ be the fundamental
  group of~$X$ and let $F$ be a subgroup family of~$\Gamma$.  Then,
  $\catop_F(X)-1$ coincides with the minimal integer
  $k\in\mathbb{N}_{\geq 0}$ such that the classifying map
  \[
  f_{\widetilde X,\Gamma, F}\colon\widetilde X \rightarrow E_F\Gamma
  \]
  is $\Gamma$-homotopic to a map with values in the
  $k$-dimensional skeleton~$E_F\Gamma^{(k)}$ (of any model of~$E_F\Gamma$).
\end{prop}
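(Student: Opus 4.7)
The plan is to adapt the Eilenberg--Ganea style argument, exactly as carried out in Proposition~\ref{prop:TC:via:factorization:classifying:map} for topological complexity. I would prove both implications of the equality by a nerve/cover correspondence routed through the universal property of~$E_F\Gamma$.

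For the direction $\catop_F(X) \leq k+1 \Rightarrow$ factorisation through $E_F\Gamma^{(k)}$: Fix an open $F$-cover $\mathcal{U}=\{U_1,\dots,U_{k+1}\}$ of $X$ with subordinate partition of unity, after first applying Lemma~\ref{lemma:reduce:cardinality:open:covers} to control multiplicity. Lift $\mathcal{U}$ to~$\widetilde X$ by taking path-connected components of the preimages under the covering projection~$\pi\colon \widetilde X \to X$; this yields a $\Gamma$-invariant open cover~$\widetilde{\mathcal{U}}$ whose members have stabilisers lying in~$F$, since each such stabiliser is conjugate to~$\im(\pi_1(U_i\hookrightarrow X))$ and $F$ is closed under conjugation and subgroups. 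The nerve $N(\widetilde{\mathcal{U}})$ carries a natural simplicial $\Gamma$-action with $F$-restricted isotropy, and its barycentric subdivision $K$ is an admissible $\Gamma$-CW-complex with $F$-restricted isotropy and $\dim K\leq k$ (Remark~\ref{rem:admissibility}). The partition of unity provides a $\Gamma$-equivariant map $\phi\colon\widetilde X\to K$; by the universal property of~$E_F\Gamma$, there is a $\Gamma$-map $\psi\colon K\to E_F\Gamma$, which by equivariant cellular approximation may be assumed to land in~$E_F\Gamma^{(k)}$. Again by uniqueness in the universal property, $\psi\circ\phi$ is $\Gamma$-homotopic to~$f_{\widetilde X,\Gamma,F}$.

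For the converse direction, suppose $f_{\widetilde X,\Gamma, F}$ is $\Gamma$-homotopic to some $f'\colon \widetilde X \to E_F\Gamma^{(k)}$. On any $k$-dimensional CW-complex one has the standard LS-style covering by $k+1$ open sets $V_0,\dots,V_k$, where $V_i$ is an equivariant open neighbourhood of the set of barycentres of~$i$-cells that deformation retracts onto it. In our situation these $V_i$ are $\Gamma$-invariant, and each path-component of $V_i$ has stabiliser in~$F$ (a subgroup of the stabiliser of the corresponding cell). Pulling the~$V_i$ back via~$f'$ and projecting to~$X$ via~$\pi$ yields an open cover $\{U_0,\dots,U_k\}$ of~$X$. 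For each component of~$U_i$, the image of its fundamental group in~$\Gamma=\pi_1(X)$ agrees (up to conjugation) with the stabiliser of a path component of~$f'^{-1}(V_i)$, which lies in~$F$; hence the~$U_i$ are $F$-sets and $\catop_F(X)\leq k+1$.

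The main obstacle is the bookkeeping that identifies the $\pi_1$-image controlling $F$-contractibility with the $\Gamma$-stabiliser of the corresponding lifted component: this is the bridge that lets the statement translate between the geometric $F$-cover datum on~$X$ and the $\Gamma$-equivariant datum on~$\widetilde X$ and on~$E_F\Gamma$. A secondary technical point is to arrange the nerve as an admissible $\Gamma$-CW-complex (forcing the barycentric subdivision) and to perform the cellular approximation equivariantly, both of which are standard in the classifying-spaces-for-families toolkit~\cite{Lueck:survey, LS} but must be invoked explicitly. Modulo these ingredients, the proof is parallel to that of Proposition~\ref{prop:TC:via:factorization:classifying:map}.
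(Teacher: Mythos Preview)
Your proposal is correct and follows essentially the same route as the paper: the paper packages the nerve/open-star correspondence into an intermediate Lemma~\ref{lem:cat:via:factorization:classifying:map} (existence of a $\Gamma$-map to \emph{some} $F$-restricted $\Gamma$-complex of dimension~$\leq k$) and then deduces the proposition from the universal property and equivariant cellular approximation, whereas you run both directions directly against~$E_F\Gamma^{(k)}$; the mathematical content is the same. Two minor remarks: the appeal to Lemma~\ref{lemma:reduce:cardinality:open:covers} is superfluous (a cover of cardinality~$k+1$ already has nerve of dimension~$\leq k$ after lifting, since distinct path-components over the same~$U_i$ are disjoint), and in the converse the paper replaces the target by an admissible $\Gamma$-\emph{simplicial} complex via equivariant simplicial approximation before taking open stars, which makes the ``stabiliser of a path-component lies in~$F$'' bookkeeping you flag completely transparent.
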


The proof of Proposition is based on the following lemma.

\begin{lemma}
	\label{lem:cat:via:factorization:classifying:map}
	Let $X$ be a connected CW-complex and let $n\in
        \mathbb{N}$. Let $F$ be a subgroup family of
        the fundamental group~$\Gamma$ of~$X$. Then, $\catop_F(X)\leq n+1 $ if and only
        if there exists a connected $\Gamma$-CW-complex~$L$ with
        $F$-restricted isotropy of dimension at most~$n$ and a
        $\Gamma$-map~$\widetilde X \rightarrow L$.
\end{lemma}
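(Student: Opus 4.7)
The plan is to realise $L$ as (a subdivision of) the nerve of a suitable lift of an $F$-cover of $X$, and conversely to extract an $F$-cover of $X$ from a cellular decomposition of the orbit space of a given $\Gamma$-CW-complex~$L$.

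For the direction $(\Leftarrow)$, suppose we are given a $\Gamma$-equivariant map $\varphi\colon\widetilde X\to L$ with $\dim L\leq n$ and $F$-restricted isotropy. First I would cover the orbit CW-complex $L/\Gamma$ by $n+1$ open sets $V_0,\dots,V_n$ via the classical LS construction for CW-complexes, where $V_k$ is a disjoint union of small contractible open neighbourhoods of the barycentres of the $k$-cells of $L/\Gamma$. Pulling these back to $L$ and then, via $\varphi$ and the universal covering $\pi\colon\widetilde X\to X$, down to $X$, I obtain an open cover $U_0,\dots,U_n$ of $X$. The remaining task is to verify that each $U_k$ is an $F$-set: a loop at $x\in U_k$ lifts to a path in $\varphi^{-1}(\widetilde V_k)$ that stays in a single path-component, so the deck transformation connecting its endpoints stabilises this component, hence stabilises the underlying cell of~$L$, and therefore lies in $F$ by the $F$-restricted isotropy hypothesis.

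For the direction $(\Rightarrow)$, suppose $\catop_F(X)\leq n+1$ and fix a witnessing $F$-cover $\{U_1,\dots,U_{n+1}\}$. Using paracompactness of CW-complexes I pick a subordinate partition of unity and apply Lemma~\ref{lemma:reduce:cardinality:open:covers} to produce a refinement $\mathcal W=\{W_{i\beta}\}$ that is still an $F$-cover (because $F$ is closed under subgroups and each $W_{i\beta}\subseteq U_i$) and whose members with a fixed first index are pairwise disjoint. In particular $\mathcal W$ has multiplicity at most $n+1$, so its nerve $N(\mathcal W)$ has dimension at most $n$. Lifting $\mathcal W$ to the collection $\widetilde{\mathcal W}$ of path-components of the preimages $\pi^{-1}(W_{i\beta})$ gives a $\Gamma$-invariant open cover of $\widetilde X$ whose nerve $N(\widetilde{\mathcal W})$ inherits a simplicial $\Gamma$-action, still has dimension at most $n$, and has vertex stabilisers of the form $\gamma H\gamma^{-1}$ with $H=\mathrm{im}\bigl(\pi_1(W_{i\beta})\to\Gamma\bigr)\in F$; higher simplex stabilisers are intersections of these and hence also lie in $F$. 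To obtain a bona fide $\Gamma$-CW-complex as required by Remark~\ref{rem:admissibility}, I pass to the barycentric subdivision $L\coloneqq N(\widetilde{\mathcal W})'$, which preserves dimension, isotropy, and simplicial $\Gamma$-action while making the action admissible. The $\Gamma$-equivariant lift of the partition of unity then yields the required map $\widetilde X\to L$, and path-connectedness of $\widetilde X$ together with path-connectedness of the members of $\widetilde{\mathcal W}$ forces $L$ to be connected.

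The main obstacle will be the bookkeeping around the $\Gamma$-CW structure on the nerve: making sure that barycentric subdivision is genuinely needed only to enforce admissibility and does not disturb the dimension bound, that the isotropy groups land in $F$ after the choices of basepoints in each lifted component (this uses closure of $F$ under conjugation), and that the refinement produced by Lemma~\ref{lemma:reduce:cardinality:open:covers} does not spoil either the $F$-set property or the control on multiplicity. Once these compatibilities are in place, the construction of the $\Gamma$-map from the partition of unity, and the extraction of the open cover from the cellular structure of $L/\Gamma$, are essentially standard.
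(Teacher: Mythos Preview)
Your proposal is correct and follows essentially the same strategy as the paper: for $(\Rightarrow)$ lift an $F$-cover to $\widetilde X$ and take the nerve, and for $(\Leftarrow)$ produce a dimension-graded cover of $L$ and pull it back. Two minor differences are worth noting: your detour through Lemma~\ref{lemma:reduce:cardinality:open:covers} is unnecessary, since an $(n{+}1)$-element cover already has nerve of dimension at most $n$ and the paper lifts it directly (citing \cite{LS} for the $\Gamma$-CW structure and $F$-isotropy of the nerve); and for $(\Leftarrow)$ the paper avoids passing to $L/\Gamma$ by first replacing $L$ with an admissible $\Gamma$-simplicial complex via equivariant simplicial approximation and then using open stars of the barycentric subdivision grouped by dimension, which makes the verification that $\gamma$ fixes a vertex more transparent than your ``stabilises the underlying cell'' step.
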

\begin{proof}
	If we assume that $\catop_F(X)\leq n+1$, then there exists an
        open $F$-cover~$U$ of~$X$ of cardinality at most~$n+1$. Let $p
        \colon \widetilde X \longrightarrow X$ denote the universal
        covering map and let $\widetilde U$ be the corresponding open
        cover of $\widetilde X$ (as in the proof of
        Proposition~\ref{prop:pi1amcat2}):
	\begin{align*}
		\widetilde U
		:= \bigl\{ V \subset \widetilde X
		\bigm|
		& \;\text{there exists a~$W \in U$ such that}
		\\[-.5ex]
		& \;\text{$V$ is a path-connected component of~$p^{-1}(W)$}.
		\bigr\}
	\end{align*}
 	Now we take $L$ to be the nerve of~$\widetilde U$ and $f\colon
        \widetilde X\rightarrow |L|$ to be a nerve map. Then, $|L|$ is a
        $\Gamma$-CW-complex~\cite[Lemma~4.5]{LS}. Moreover, the
        nerve map is a $\Gamma$-map~{\cite[Lemma 4.8]{LS}} and
        clearly we have $\dim(|L|)\leq \dim(N)\leq n$, where $N$ denotes
        the nerve of~$U$. Finally, $|L|$ has $F$-restricted
        isotropy~{\cite[Lemma~4.11]{LS}}.
 	
 	Conversely, let $L$ be a $\Gamma$-CW-complex with
        $F$-restricted isotropy of dimension at most~$n$ and let
        $f\colon \widetilde X \rightarrow L$ be a $\Gamma$-map.  By
        the equivariant version of simplicial
        approximation~{\cite[Proposition~A.4]{OS}}, we can assume
        without loss of generality that $L$ is a connected admissible
        $\Gamma$-simplicial complex with $F$-restricted isotropy and
        with dimension at most~$n$. Let $U$ be the open cover of~$L'$
        consisting of the open stars of the barycentric
        subdivision~$L'$ of~$L$, indexed by the dimension of
        the underlying simplices of~$L$. Grouping the sets of~$U$
        according to the dimension of the underlying simplices,
        we can replace~$U$ by an open cover of $n+1$~open sets.
        Let us pull back~$U$ via~$f$ and push
        it down to~$X$ via~$p$. We get the following open cover of~$X$:
 	\[
 	\bigl\{p(f^{-1}(V))
        \bigm| V\in U \bigr\}.
 	\]
 	By construction, the cardinality of this open cover is again at
        most~$n+1$.

        Hence, to conclude it is enough to show that this cover is
        indeed an $F$-cover of~$X$.  Let $V\in U$. We show now that
        for every~$x\in f^{-1}(V)$ there exists a~$y\in L$ such that
 	\[
 	\im\bigl(\pi_1(p(f^{-1}(V))\hookrightarrow X, p(x))\bigr)
 	\]
 	is a subgroup (up to conjugation) of the isotropy group
        $\Gamma_y$. The result then easily follows since $L$ has
        $F$-restricted isotropy and $F$ is closed under taking
        subgroups.

        So, let $x\in f^{-1}(V)$. Since we need to prove
        the statement ``up to conjugation'', we can assume that
        $\Gamma=\pi_1(X,x)$. Let $\gamma \in
        \im(\pi_1(p(f^{-1}(V))\hookrightarrow X,p(x)))$,
        i.e., $\gamma=[\sigma]$ where $\sigma \colon [0,1]\rightarrow
        p(f^{-1}(V))\subseteq X$ is a loop based at~$p(x)$.  By
        the lifting properties of the covering
 	\[
 	p_{|f^{-1}(V)}:f^{-1}(V)\rightarrow p(f^{-1}(V)),
 	\]
 	there exists a lift~$\widetilde \sigma \colon [0,1]\rightarrow
        f^{-1}(V)\subseteq \widetilde X$ such that
        $p\circ\widetilde\sigma =\sigma$ and $\widetilde\sigma(0)=x$.
        Moreover, by definition of the deck transformation action and by uniqueness
        of the lift, we have that $\gamma\cdot
        \widetilde\sigma(1)=x$.  Now, since $f$ is a $\Gamma$-map, we
        also have
 	\[
 	\gamma\cdot f(\widetilde\sigma(1))=f(x).
 	\]
 	Moreover, both $f(\widetilde\sigma(1))$ and $f(x)$ are in $V$
 	 and $f\circ \widetilde\sigma$ is a path connecting them
        contained in~$V$. Hence they lie in the same path connected
        component of~$V$ and, by construction of the cover~$U$, there
        exists a vertex $v$ of $L'$ such that
        $f(\widetilde\sigma(1)),f(x)\in \st(v)$.  Since the action on
        $L'$ is simplicial, we know that
        $\gamma\cdot\st(v)=\st(\gamma\cdot v)$.  It follows that
        $\st(v)\cap \st(\gamma\cdot v)\neq \emptyset$.  Hence, by
        definition of open stars, we have that there exists a
        simplex~$\sigma \subset L'$ such that $\gamma \cdot
        \textup{int}(\sigma) \cap \textup{int}(\sigma) \neq \emptyset$
        and whose set of vertices contains both $v$ and $\gamma \cdot
        v$. Using the fact that $L'$ is an admissible
        $\Gamma$-simplicial complex, Remark~\ref{rem:admissibility}
        readily implies that $\gamma \cdot \sigma = \sigma$. This
        shows that $\gamma \cdot v = v$, whence $\gamma \in \,
        \Gamma_v$, as desired.
\end{proof}

\begin{proof}[Proof of Proposition~\ref{prop:introgencat}]
	The result follows from
        Lemma~\ref{lem:cat:via:factorization:classifying:map} by using
        the universal property of classifying spaces for subgroup
        families and the equivariant cellular approximation
        theorem~{\cite[Section 1.1]{Lueck:survey}}.
\end{proof}

The previous proposition has implications in terms of Bredon
cohomology, as shown by Farber, Grant, Lupton, and
Oprea~\cite{FGLO}. Namely, it provides a lower bound for the
topological complexity, via the diagonal category, in terms of a
vanishing map in Bredon cohomology~{\cite[Theorem 4.1]{FGLO}}. Thanks
to Proposition~\ref{prop:introgencat:new}, this argument also applies to
general subgroup families.

\begin{cor}
	\label{cor:lower:bound:bredon}
	Let $X$ be an aspherical connected CW-complex. Let $\Gamma$ be the fundamental group of $X$ and let $F$ be a subgroup family of~$\Gamma$.
	Suppose that there exists~$k\in \mathbb{N}_{\geq0}$ and a Bredon module~$M$ such that the restriction
	\[
	H^k_F(\Gamma;M)\rightarrow H^k(\Gamma;\textup{res}_{\{1\}}^F M)
	\]
	is non-zero. Then $\catop_F(X)\geq k+1$.
\end{cor}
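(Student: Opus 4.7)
The plan is to argue by contradiction, mirroring the strategy used in the proof of~\cite[Theorem~4.1]{FGLO} for the diagonal family and topological complexity, but with Proposition~\ref{prop:introgencat} playing the role of Proposition~\ref{prop:TC:via:factorization:classifying:map}.

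First I would suppose, for the sake of contradiction, that $\catop_F(X) \leq k$. Because $X$ is aspherical, its universal covering~$\widetilde X$ is contractible and carries a free cellular $\Gamma$-action, so $\widetilde X$ is a model for~$E\Gamma = E_{\{1\}}\Gamma$. Proposition~\ref{prop:introgencat} then yields a $\Gamma$-map $g\colon \widetilde X \to E_F\Gamma^{(k-1)}$ that is $\Gamma$-homotopic to the classifying map $f_{\widetilde X,\Gamma,F}$.

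Next I would unwind the definition of the restriction map: $H^k_F(\Gamma;M)$ is the cohomology of the Bredon cellular cochain complex of $E_F\Gamma$ with coefficients in the Bredon module~$M$, while $H^k(\Gamma;\textup{res}_{\{1\}}^F M)$ is the ordinary cellular $\Gamma$-cohomology of $\widetilde X$ with coefficients in the restricted module. By the universal property of classifying spaces for families, the restriction map is induced on cellular cochain complexes by the classifying map $f_{\widetilde X,\Gamma,F}$; since cellular cochains are $\Gamma$-homotopy invariant, I can replace $f_{\widetilde X,\Gamma,F}$ by the homotopic representative~$g$. But the Bredon cellular cochain complex of the $(k-1)$-dimensional subcomplex $E_F\Gamma^{(k-1)}$ vanishes in degrees~$\geq k$, so the induced map in degree~$k$ must be zero, contradicting the hypothesis and forcing $\catop_F(X) \geq k+1$.

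The main technical point, and the step I would need to be most careful about, is the identification of the restriction map with the map induced by $f_{\widetilde X,\Gamma,F}$ at the level of cellular cochains. This requires relating the coefficient system~$M$ on the orbit category of~$\Gamma$ with respect to~$F$ to its restriction to the trivial family and verifying that the cochain-level map induced by a $\Gamma$-equivariant cellular map $\widetilde X \to E_F\Gamma$ recovers precisely the standard restriction morphism in Bredon cohomology; once this bookkeeping is in place, the remaining argument is just Proposition~\ref{prop:introgencat} combined with the vanishing of cellular cohomology above the skeleton dimension.
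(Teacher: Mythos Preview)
Your proposal is correct and follows exactly the route indicated in the paper: invoke Proposition~\ref{prop:introgencat} to factor the classifying map through the $(k-1)$-skeleton and then argue as in~\cite[Theorem~4.1]{FGLO} that the restriction map in Bredon cohomology, being induced by this classifying map, must vanish in degree~$k$. The only difference is that you spell out the cellular-cochain bookkeeping, whereas the paper simply defers to the FGLO reference.
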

\begin{proof}
  Using the factorisation characterisation of~$\catop_F$
  (Proposition~\ref{prop:introgencat:new}), we can proceed 
  as in the proof for the diagonal family~\cite[Theorem~4.1]{FGLO}.
\end{proof}  

The following application is due to Kevin Li:

\begin{example}\label{exa:hypbredon}
  Let $M$ be an oriented closed connected aspherical $n$-manifold
  whose fundamental group is hyperbolic and let $n\geq 2$.
  Then $\amcat M = n+1$. 
  Of course, this follows from Mineyev's result on the comparison map
  in bounded cohomology and Gromov's vanishing theorem (see the proof
  of Proposition~\ref{prop:exists:mfld:gen:rud:hyp:groups}).
  Using Corollary~\ref{cor:lower:bound:bredon}, one can give an alternative
  argument: 
  Because $\Gamma$ is hyperbolic (and torsion-free), all amenable
  subgroups of~$\Gamma$ are virtually cyclic and the construction
  principle of Juan-Pineda and Leary can be used to obtain a model
  of~$E_{\Am}\Gamma$ from a model of~$E \Gamma$~\cite[Remark~7]{jpl}.
  Therefore, the restriction map
  \[ H^k_{\Am}(\Gamma;\R) \longrightarrow H^k(\Gamma;\R) 
  \]
  is an epimorphism for every~$k \in \N_{\geq 2}$ (this is the
  cohomological analogue of the exact sequence of Juan-Pineda and
  Leary~\cite[Proposition 18]{jpl}). 
  Moreover,
  $M$ is a model of~$B\Gamma$ and so~$H^n(\Gamma;\R) \cong H^n(M)
  \cong\R$.  So, Corollary~\ref{cor:lower:bound:bredon} shows
  that $\amcat M \geq n+1$.
\end{example}

%%%%%%%%%%%%%%%%%%%%%%%%%%%%%%%%%%%%%%%%%%%%%%%%%%%%%%%%%%%
\section{Amenable category vs. topological complexity}

We now investigate the following question:

\begin{quest}\label{q:amtc:new}
  For which topological spaces~$X$ do we have
  \[ \amcat(X \times X) \leq \TC(X) \text{\;?}
  \]
\end{quest} 

\subsection{Basic examples}

As warm-up examples behind this question, we consider spaces with
amenable fundamental group, wedges of circles, and surfaces.

\begin{example}\label{exa:amtcamenable}
  As the product of two amenable groups is amenable, we have $\amcat
  (X \times X) = 1$ for all spaces~$X$ with amenable fundamental
  group. Therefore, the property in Question~\ref{q:amtc:new} holds in
  this case.
\end{example}

\begin{example}\label{exa:amtcwedge}
  Let $X = \bigvee_I \mathbb{S}^1$ be a finite wedge of circles. Then we have
  $\TC(X)\geq 3$~{\cite[Theorem~7.3]{Farber:instabilities}}. 
  Hence, by applying the product formula
  (Proposition~\ref{prop:cat:products}) we get
  \[ \amcat(X\times X)\leq 2\cdot \catop_{\Am}(X) - 1 \leq 3 \ ,\]
  whence the inequality in Question~\ref{q:amtc:new}.
\end{example}

\begin{lemma}
  If $S$ is an oriented connected surface, then we have
  $$
  \amcat (S \times S) \leq \TC(S) \ .
  $$
\end{lemma}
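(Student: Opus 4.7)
The plan is to split by homeomorphism type of $S$, using the classification of surfaces. If $\pi_1(S)$ is amenable -- which covers the sphere, the torus, and all non-compact surfaces with abelian fundamental group (plane, open cylinder, once-punctured plane) -- then $\pi_1(S\times S)\cong\pi_1(S)\times\pi_1(S)$ is amenable, so Example~\ref{exa:amtcamenable} immediately gives $\amcat(S\times S)=1\leq\TC(S)$.

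Next suppose $S$ is non-compact with non-abelian fundamental group. Then $S$ deformation retracts onto a $1$-dimensional spine, i.e., is homotopy equivalent to some wedge of circles $\bigvee_I S^1$ with $|I|\geq 2$. When $I$ is finite, Example~\ref{exa:amtcwedge} applies verbatim; in general the spine has $\amcat\leq 2$ (being a $1$-complex), and the product formula (Proposition~\ref{prop:cat:products}) yields $\amcat(S\times S)\leq 3$, while $\TC(S)\geq 3$ since $\pi_1(S)$ is non-abelian (so $S$ is not an H-space). Homotopy invariance of both $\amcat$ (via Lemma~\ref{prop:cathinv}) and $\TC$ allows replacing $S$ by its spine throughout.

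The remaining case is $S=\Sigma_g$ with $g\geq 2$. From the proof of Proposition~\ref{prop:gen:R:quest:surfaces} we know $\amcat(\Sigma_g)=3$, so the product formula (Proposition~\ref{prop:cat:products}) gives $\amcat(\Sigma_g\times\Sigma_g)\leq 2\cdot 3-1=5$. On the other hand, the classical computation of Farber-Tabachnikov-Yuzvinsky says $\TC(\Sigma_g)=5$ for $g\geq 2$, and the desired inequality follows. The main external input, and essentially the only non-trivial step, is this computation of $\TC(\Sigma_g)$; everything else is a direct combination of $\pi_1$-invariance, the product formula, and the examples already recorded in the paper.
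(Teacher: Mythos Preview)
Your proof is correct and follows the same three-case split as the paper: amenable~$\pi_1$, surfaces homotopy equivalent to a wedge of circles, and closed~$\Sigma_g$ with $g\geq 2$ via Farber's computation $\TC(\Sigma_g)=5$. The only difference is cosmetic: for the last case the paper obtains $\amcat(\Sigma_g\times\Sigma_g)=5$ directly from the positive simplicial volume of the product (Example~\ref{exa:possimvol} and Corollary~\ref{cor:catsimvol}), whereas you deduce the bound $\leq 5$ from $\amcat(\Sigma_g)=3$ together with the product formula.
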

\begin{proof}
  Let us assume first that $S$ is a non-compact surface or a surface
  with boundary. Then, we
  know that $S$ has (possibly trivial) free fundamental group and it
  retracts to a (possibly trivial) wedge of circles. Hence, since both
  $\Am$-category and $\TC$ are homotopy invariants, the desired
  inequality comes from the computation in Example~\ref{exa:amtcwedge}.

  Let us now assume that $S$ is a closed connected surface. If $S$ has
  amenable fundamental group, then we are done by
  Example~\ref{exa:amtcamenable}.

  So we are reduced to consider the case of hyperbolic surfaces
  $\Sigma_g$, with $g \geq 2$. On the one hand, we know that
  $\TC(\Sigma_g)=5$~{\cite[Theorem
      9]{Farber:top:compl:of:motion:planning}}. On the other hand, by
  Example~\ref{exa:possimvol} and Corollary~\ref{cor:catsimvol}, we
  have $\amcat (\Sigma_g \times \Sigma_g) = 5$. This shows that 
  $\amcat (S \times S) \leq \TC(S)$ is verified also in
  this case.
\end{proof}

We show in the next section that the results about wedges of
circles and surfaces admit natural generalisations.

\subsection{Proof of Theorem~\ref{thm:examples}}\label{sec:examples}

In this section, we will prove the following:

\begin{thm}\label{thm:examples:new}
  The following classes of spaces satisfy the estimate in
  Question~\ref{q:amtc:new}:
  \begin{enumerate}
  \item Spaces with amenable fundamental group;
  \item Spaces of type $B\Gamma$ where $\Gamma$ is a finitely
    generated geometrically finite hyperbolic group;
  \item Spaces of type $B\Gamma$ where $\Gamma=H*H$ is the free square
    of a geometrically finite group $H$;
  \item Manifolds whose fundamental group is the fundamental group of
    a graph of groups whose vertex (and edge) groups are all amenable.
  \end{enumerate}
\end{thm}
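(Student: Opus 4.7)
The plan is to handle each of the four classes separately, with the product formula $\amcat(X\times X)\leq 2\amcat(X)-1$ from Proposition~\ref{prop:cat:products} as the common tool on the left-hand side.

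Class~(1) is immediate: amenability is closed under products, so $\pi_1(X\times X)\cong\pi_1(X)\times\pi_1(X)$ is amenable, which makes $X\times X$ itself an $\Am$-set and gives $\amcat(X\times X)=1\leq\TC(X)$. For class~(4), if $\pi_1(M)$ is amenable we reduce to~(1); otherwise Corollary~\ref{cor:amcat2} applies (since $\pi_1(M)$ is non-amenable and a graph of amenable groups), yielding $\amcat(M)=2$, and the product formula gives $\amcat(M\times M)\leq 3$. Since $M$ is an ANR, Farber's classification of spaces with small topological complexity forces $M$ to be contractible or homotopy equivalent to an odd-dimensional sphere whenever $\TC(M)\leq 2$; in both cases $\pi_1(M)$ would be amenable, contradicting the standing hypothesis, so $\TC(M)\geq 3\geq \amcat(M\times M)$.

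For classes~(2) and~(3), combining the product formula with the dimension bound $\amcat(B\Gamma)\leq\lscat(B\Gamma)\leq\gd(\Gamma)+1$ from Remark~\ref{rem:F-cat:leq:lscat} (applied to a finite simplicial model of~$B\Gamma$) gives
\[
\amcat(B\Gamma\times B\Gamma)\leq 2\gd(\Gamma)+1.
\]
For class~(2), Dranishnikov's theorem~\cite{Dran:hyp:groups} provides the matching lower bound $\TC(B\Gamma)=2\gd(\Gamma)+1$ for a finitely generated geometrically finite hyperbolic group (using that $\cd(\Gamma)=\gd(\Gamma)$ for such~$\Gamma$). For class~(3), one has $\gd(H*H)=\gd(H)$ by the Mayer-Vietoris structure of~$B(H*H)\simeq BH\vee BH$, so it remains to show $\TC(B(H*H))\geq 2\gd(H)+1$. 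The strategy is to exploit the action of $\Gamma\times\Gamma$ on the square of the Bass-Serre tree of $\Gamma=H*H$ to build an explicit low-dimensional $(\Gamma\times\Gamma)$-CW model of~$E_{\mathcal D}(\Gamma\times\Gamma)$, and then to combine the factorisation characterisation of~$\TC$ for aspherical complexes (Proposition~\ref{prop:TC:via:factorization:classifying:map}) with a Bredon-cohomology non-vanishing result in the spirit of Corollary~\ref{cor:lower:bound:bredon}, forcing the classifying map to require the full $2\gd(H)$-skeleton.

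The main obstacle is expected to lie in class~(3): since $H$ is only assumed geometrically finite and not necessarily hyperbolic, Dranishnikov's theorem does not apply verbatim to $\Gamma\times\Gamma=(H*H)\times(H*H)$, and one instead has to produce by hand a non-vanishing Bredon cohomology class in top degree. The Bass-Serre tree of~$H*H$ provides the natural framework, but one must still verify that the restriction to ordinary group cohomology of this class is non-zero, so that Corollary~\ref{cor:lower:bound:bredon} delivers the sharp lower bound $\TC(B(H*H))\geq 2\gd(H)+1$.
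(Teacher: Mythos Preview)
Your treatment of classes~(1), (2), and~(4) matches the paper's proof closely. For~(4) the paper argues via $\lscat(M)\leq \TC(M)$ and James' result that $\lscat(M)\leq 2$ forces~$M$ to be a homotopy sphere, rather than invoking a classification of small~$\TC$; your variant works too, but be careful with the attribution, since the statement ``$\TC(M)\leq 2$ implies $M$ is contractible or an odd sphere'' for closed manifolds is due to Grant--Lupton--Oprea rather than Farber.

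The real gap is in class~(3). You propose to build a model of~$E_{\mathcal D}(\Gamma\times\Gamma)$ from the square of the Bass--Serre tree and then to exhibit a non-vanishing Bredon class in top degree. This is a substantial project in its own right, and you correctly flag the verification of non-vanishing as the main obstacle without indicating how to carry it out. The paper bypasses all of this by citing the Dranishnikov--Sadykov formula for the topological complexity of a wedge~\cite{Dran:Sadykov:free:product}: for a geometrically finite~$H$ with~$Y=BH$ one has
\[
\TC(Y\vee Y)=\max\{\TC(Y),\,2\gd H+1\}\geq 2\gd H+1=2\gd\Gamma+1,
\]
which immediately closes the chain $\amcat(X\times X)\leq\lscat(X\times X)\leq 2\gd\Gamma+1\leq\TC(X)$. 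So the missing ingredient in your proposal is precisely this external input; once you know it, no Bass--Serre or Bredon argument is needed, and part~(3) becomes as short as part~(2).
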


\begin{proof}
  \emph{Ad~1.} We have already discussed this case in
  Example~\ref{exa:amtcamenable}.

  \emph{Ad~2.} Let $X$ be a model of~$B\Gamma$, where $\Gamma$ is
  a finitely generated geometrically finite hyperbolic group. Then, we
  have that $2\cdot\gd\Gamma + 1 \leq
  \TC(X)$~\cite[Theorem~3.0.2]{Dran:hyp:groups}. Hence, we get the
  following chain of inequalities:
  \begin{align*}
    \catop_{\Am}(X \times X) & \leq  \lscat(X \times X)  & \\
    &  \leq 2\cdot\gd\Gamma + 1 & \\
    & = \TC(X) \ .
  \end{align*}

  \emph{Ad~3.} Let $H$ be a geometrically finite group and let $\Gamma
  = H * H$. Assume that $X$ and $Y$ are model of $B\Gamma$ and $BH$ of
  minimal dimension, respectively. Then, $X$ is homotopy equivalent to
  $Y\lor Y$. This shows the following
  \begin{align*}
    \TC(X) & =\TC(Y\lor Y)  & \\
    %& \geq \max\{\TC(Y),\lscat(Y\times Y) \} & \text{by  {\cite[\text{Theorem 3.6}]{Dran:wedges}}}\\
    & =\max\{  \TC(Y), 2\cdot \gd H+1 \} & \text{ by {\cite[\text{Theorem 2}]{Dran:Sadykov:free:product}}}\\
    & =\max\{ \TC(Y), 2\cdot\gd\Gamma +1 \} & \\
    & \geq 2 \cdot \gd\Gamma + 1 & \\
    & \geq \lscat(X\times X) & \\
    & \geq \amcat(X\times X).
  \end{align*}

  \emph{Ad~4.}
  Let $M$ be a manifold whose fundamental group~$\Gamma$ is the
  fundamental group of a graph of groups whose vertex (and edge)
  groups are all amenable.  If $\Gamma$ is amenable, then we
  are in the case of the first part.

  So, let us assume that $\Gamma$ is non-amenable. Then, there exists
  a model~$X$ of~$B\Gamma$ with $\amcat(X) = 2$
  (Theorem~\ref{thm:smallamcat}). Hence, given a classifying map $f
  \colon M \to X$ for $M$, by Remark~\ref{rem:pullbackcat} we get
  $\amcat(M) \leq 2$. This implies that $\amcat(X \times X)
  \leq 3$ by the product formula
  (Proposition~\ref{prop:cat:products}).
      
  If $\lscat (M) \leq 2$, then $M$ is either contractible or a homotopy sphere~\cite[page~336]{James}. 
  This shows that $M$ has abelian (whence amenable)
  fundamental group. As we assumed $\Gamma$ to be
  non-amenable, we have $\lscat (M) \geq 3$ and thus obtain the
  following chain of inequalities:
  \[
  \amcat(M \times M) \leq 3 \leq \lscat (M) \leq \TC(M) \ .
  \qedhere
  \] 
\end{proof}

\begin{rem}
	\label{rem:lscat:leq:tc:does:not:hold}
	In the parts (2) and (3) of Theorem~\ref{thm:examples} we actually showed that $\lscat(X \times X)\leq \TC(X)$ holds for the LS-category. However, this does
        not hold in general: Indeed, if we consider the $n$-dimensional torus~$T^n$ for~$n\in \mathbb{N}_{\geq 2}$, we have $\lscat(T^n\times T^n)=2n+1$, but $\TC(T^n)=n+1$~\cite{Farber:top:compl:of:motion:planning}.
\end{rem}

%%%%%%%%%%%
\subsection{Manifolds with $3$-manifold fundamental group}

We show that the techniques introduced in
Section~\ref{sec:cplxs:mflds} allow us to construct infinite families
of manifolds for whom Question~\ref{q:amtc:new} is answered positively. In fact, the following
results are also true for all isq-classes of groups~$\mathcal{G}
\subset \Am$.

\begin{thm}\label{thm:strictly:ess:mfld:am:TC}
  Let $\mathcal{G}$ be an isq-class of groups $\mathcal{G} \subset \Am$. 
 Let $Y$ be an oriented closed connected triangulable $n$-manifold
   whose fundamental group is Hopfian. 
  Let $\overline{k} > 1$ be the maximal integer such that $Y$ is essential
  in degree~$\overline{k}$. Then, if $$\gcat (Y) \leq \frac{\overline{k}}{2} +
  \frac{3}{2},$$ there exists for every $n \geq 2 \dim(Y)$ an
  oriented closed connected $n$-manifold~$M$ such that $\pi_1(M) \cong
  \pi_1(Y)$ and
  $$
  \gcat (M \times M) \leq \TC(M).
  $$
\end{thm}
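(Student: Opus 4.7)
The plan is to build $M$ from $Y$ via the manifold thickening of Corollary~\ref{cor:hopfian:strictly:essential} and then to squeeze both $\gcat(M\times M)$ and $\TC(M)$ around the value $\overline{k}+2$. Since $Y$ is an oriented closed connected triangulable manifold with Hopfian fundamental group that is essential in degree $\overline{k}$, Corollary~\ref{cor:hopfian:strictly:essential} produces, for every $n\geq 2\dim(Y)$, an oriented closed connected triangulable $n$-manifold $M$ satisfying $\pi_1(M)\cong\pi_1(Y)$, $\gcat(M)\leq\gcat(Y)$, and again essential in degree $\overline{k}$.

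For the upper bound, the product formula for $\gcat$ (Proposition~\ref{prop:cat:products}) together with the hypothesis $\gcat(Y)\leq \overline{k}/2+3/2$ yields
\[
\gcat(M\times M)\leq 2\gcat(M)-1\leq 2\gcat(Y)-1\leq \overline{k}+2.
\]
It therefore suffices to prove $\TC(M)\geq \overline{k}+2$. For this, I would use that essentiality of $M$ in degree $\overline{k}$ provides a non-zero class $\alpha\in H^{\overline{k}}(M;\R)$ in the image of the map $H^{\overline{k}}(B\pi_1(M);\R)\to H^{\overline{k}}(M;\R)$ induced by the classifying map, hence a non-trivial zero-divisor $u\coloneqq \alpha\otimes 1-1\otimes\alpha\in H^{\overline{k}}(M\times M;\R)$ (vanishing under $\Delta^*$, non-zero by the Künneth decomposition). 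Since $\dim M\geq 2\overline{k}$, Poincaré duality on $M$ furnishes a class $\beta\in H^{n-\overline{k}}(M;\R)$ with $\alpha\cup\beta\neq 0$, whose associated zero-divisor $v\coloneqq \beta\otimes 1-1\otimes\beta$ can be combined with $u$; one then aims to exhibit a non-trivial cup product of at least $\overline{k}+1$ zero-divisors in $H^\bullet(M\times M;\R)$, giving $\TC(M)\geq\overline{k}+2$ via the standard zero-divisor cup length lower bound.

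The main obstacle lies in this final lower bound on $\TC(M)$, since a single class $\alpha$ only yields zero-divisor cup length up to the nilpotency of $\alpha$ in $H^\bullet(M;\R)$, which is not directly controlled by the hypotheses. An alternative and arguably cleaner route is to use $\TC(M)\geq \catop_{\mathcal{D}}(M\times M)$ from Lemma~\ref{lemma:D-top:compl:leq:top:compl} together with the factorisation characterisation of $\catop_{\mathcal{D}}$ in Proposition~\ref{prop:introgencat:new}: the essentiality of $M$ in degree $\overline{k}$ should prevent the classifying map $\widetilde{M\times M}\to E_{\mathcal{D}}(\pi_1(M)\times\pi_1(M))$ from factoring $\pi_1$-equivariantly through the $\overline{k}$-skeleton, which would force $\catop_{\mathcal{D}}(M\times M)\geq \overline{k}+2$ and complete the argument.
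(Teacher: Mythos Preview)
Your construction of $M$ and the upper bound $\gcat(M\times M)\leq \overline{k}+2$ are exactly as in the paper. The gap is in the lower bound for~$\TC(M)$: neither of your two proposed routes is actually carried out. The zero-divisor approach, as you yourself note, only controls $\TC(M)$ through the nilpotency of~$\alpha$, and nothing in the hypotheses guarantees a product of $\overline{k}+1$ zero-divisors is non-zero. The alternative via $\catop_{\mathcal D}$ and Proposition~\ref{prop:introgencat:new} is only a hope (``should prevent''); you give no argument linking essentiality in degree~$\overline{k}$ to an obstruction against compressing into the $\overline{k}$-skeleton of~$E_{\mathcal D}(\pi_1(M)\times\pi_1(M))$.

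The paper avoids both difficulties by routing the lower bound through Lusternik--Schnirelmann category rather than attacking $\TC$ directly. Since $M$ is essential in degree~$\overline{k}$, Remark~\ref{rem:essential:in:degree:vs:strictly} gives that $M$ is strictly $\overline{k}$-essential in the sense of Dranishnikov--Katz--Rudyak, and their \cite[Theorem~5.2]{DKR} then yields $\lscat(M)\geq \overline{k}+2$. Combined with the standard estimate $\lscat(M)\leq \TC(M)$ of Proposition~\ref{prop:bounds:TC}, this gives $\TC(M)\geq \overline{k}+2$ and closes the chain of inequalities. The key ingredient you are missing is precisely this passage through strict $\overline{k}$-essentiality and the DKR lower bound for~$\lscat$.
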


\begin{proof}
  For every $n \geq 2 \cdot \dim(Y)$, by
  Corollary~\ref{cor:hopfian:strictly:essential}, there exists an oriented closed connected
  triangulable manifold~$M$ that is essential in degree~$\overline{k}$ with 
  $$
  \gcat (M) \leq \gcat (Y)  \qand \pi_1(M) \cong \pi_1(Y).
  $$
  Hence, using Proposition~\ref{prop:cat:products} and the hypothesis on~$\gcat (Y)$,
  we obtain
  $$
  \gcat (M \times M) \leq 2 \cdot \gcat(M)  -1 \leq 2 \cdot \gcat(Y) -1 \leq \overline{k} + 2.
  $$

  By Remark~\ref{rem:essential:in:degree:vs:strictly} $M$ is also a strictly 
  $\overline{k}$-essential manifold. Hence, it satisfies $\lscat (M) \geq
  \overline{k} + 2$~\cite[Theorem~5.2]{DKR}. Since we already know
  that $\lscat (M) \leq \TC(M)$ by Proposition~\ref{prop:bounds:TC},
  we get
  \begin{align*}
    \gcat (M \times M)
    & \leq 2 \gcat(M)  -1 \leq 2 \gcat(Y) -1 \leq \overline{k} + 2 \leq \lscat(M)
    \\
    & \leq \TC(M).
    \qedhere
  \end{align*}
\end{proof}

As an application of the previous result, we obtain Theorem~\ref{thm:am:vs:tc:3:mflds:intro}.

\begin{cor}\label{thm:am:vs:tc:3:mflds:intro:new}
  Let $\mathcal{G}$ be an isq-class of groups with $\mathcal{G} \subset \Am$.
  Let $Y$ be an oriented
  closed connected $3$-manifold which is a connected sum of graph
  manifolds. Then, for every $n \geq 6$, there exists an oriented
  closed connected $n$-manifold $M$ with $\pi_1(M) \cong \pi_1(Y)$ and
  such that
  $$
  \gcat(M \times M) \leq \TC (M) \ .
  $$  
\end{cor}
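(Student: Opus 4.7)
The plan is to apply Theorem~\ref{thm:strictly:ess:mfld:am:TC} with $\overline{k}=3$, after a short case split on the size of $\amcat Y$. First I record the ingredients that do not require a case split: the fundamental group of any closed $3$-manifold is residually finite (via Perelman's geometrisation), and in particular Hopfian; every prime summand of $Y$ is a graph manifold, hence has vanishing simplicial volume by Soma's theorem; and combining this vanishing with Theorem~\ref{grom:van:thm}, Proposition~\ref{prop:connected:sum}, and Remark~\ref{rem:3mfldsdirect} forces $\gcat Y\leq 3$.

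If $\amcat Y\leq 2$, then Proposition~\ref{prop:pi1amcat2} implies that $\pi_1(Y)$ is the fundamental group of a graph of groups whose vertex and edge groups are all amenable. In this case I sidestep Theorem~\ref{thm:strictly:ess:mfld:am:TC}: for any $n\geq 6$, Lemma~\ref{lem:amcatmfd} applied to the finite presentation $2$-complex of $\pi_1(Y)$ produces an oriented closed connected $n$-manifold $M$ with $\pi_1 M\cong\pi_1 Y$, and Theorem~\ref{thm:examples:new}(4) then directly yields $\gcat(M\times M)\leq\TC M$.

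If instead $\amcat Y=3$, Theorem~\ref{thm:smallamcat} combined with Proposition~\ref{prop:connected:sum} forces at least one prime summand $Y_i$ of $Y$ to have non-amenable fundamental group. Being a graph manifold with infinite fundamental group, $Y_i$ is irreducible, and the Sphere theorem then makes it aspherical, so $Y_i\simeq B\pi_1(Y_i)$. The classifying map $Y\to B\pi_1(Y)\simeq \bigvee_j B\pi_1(Y_j)$ factors through the pinch map $Y\to Y_1\vee\cdots\vee Y_r$, whose projection onto the wedge summand $Y_i$ has degree one; pulling back a generator of $H^3(B\pi_1(Y_i);\R)=H^3(Y_i;\R)$ therefore produces a non-zero class in $H^3(Y;\R)$, so $Y$ is essential in degree $3$. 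With $\gcat Y\leq 3 = \tfrac{3}{2}+\tfrac{3}{2}$, Theorem~\ref{thm:strictly:ess:mfld:am:TC} produces, for every $n\geq 2\cdot\dim Y = 6$, an oriented closed connected $n$-manifold $M$ with $\pi_1 M\cong\pi_1 Y$ and $\gcat(M\times M)\leq \TC M$.

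The main obstacle will be the essentiality argument in the case $\amcat Y=3$: summands with finite fundamental group (spherical space forms) contribute nothing to $H^3(B\pi_1;\R)$, so the argument hinges on extracting an aspherical graph-manifold summand whose fundamental cohomology class is transported to $Y$ by the pinch map. The case split on $\amcat Y$ is engineered precisely to produce such a summand via the classification of manifolds with small amenable category recorded in Theorem~\ref{thm:smallamcat} and Remark~\ref{rem:3mfldsdirect}.
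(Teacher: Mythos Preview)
Your second case is essentially the paper's argument: check that $\pi_1(Y)$ is Hopfian, that $Y$ is essential in degree~$3$, and that $\gcat Y\leq 3$, then apply Theorem~\ref{thm:strictly:ess:mfld:am:TC}. The paper does not split cases at all; it asserts directly that the summands are aspherical (hence $Y$ is essential in degree~$3$) and cites the external computations of G\'omez-Larra\~naga, Gonz\'alez-Acu\~na and Heil for $\gcat Y\leq 3$. Your case split is presumably designed to cover summands with finite fundamental group, which is a reasonable precaution, and your extraction of an aspherical summand in the case $\amcat Y=3$ is a clean way to justify essentiality.

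However, the first case ($\amcat Y\leq 2$) has a genuine gap. Theorem~\ref{thm:examples:new}(4) addresses Question~\ref{q:amtc:new}, which is about $\amcat$ only: its conclusion is $\amcat(M\times M)\leq\TC(M)$, not $\gcat(M\times M)\leq\TC(M)$. Since $\mathcal G\subset\Am$ forces $\amcat\leq\gcat$, the inequality goes the wrong way and you cannot upgrade to the $\gcat$-statement for an arbitrary isq-class $\mathcal G\subset\Am$. There is also a smaller issue in your common ingredients: Remark~\ref{rem:3mfldsdirect} computes $\amcat$ of $3$-manifolds, not $\gcat$, and Theorem~\ref{grom:van:thm} only gives the implication ``small $\gcat$ $\Rightarrow$ vanishing comparison map'', not its converse; so the chain you list does not establish $\gcat Y\leq 3$. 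The paper obtains this bound by direct citation of the results of G\'omez-Larra\~naga, Gonz\'alez-Acu\~na and Heil on the $\mathcal G$-category of graph manifolds, and that is what you should invoke rather than assembling the bound from the internal statements you name.
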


\begin{proof}
  Since $Y$ is an oriented closed connected $3$-manifold, its fundamental
  group is residually finite~\cite{Hempel}, whence Hopfian.
  Moreover, $Y$ being a connected
  sum of aspherical manifolds, is essential in degee~$3$.
  Finally, $\gcat (Y) \leq 3$
  because $Y$ is the connected sum of graph
  manifolds~\cite[Theorem~2]{GGH:am}\cite[Corollary~5]{GGW}. Hence, we have
  $$
  \gcat (Y) \leq 3 = \frac{3}{2} + \frac{3}{2} = \frac{\dim(Y)}{2} + \frac{3}{2}.
  $$
  This inequality shows that we are in the situation of
  Theorem~\ref{thm:strictly:ess:mfld:am:TC}, whence we get the thesis.
\end{proof}

It is a natural question to ask if the previous result can be improved by
considering $M$ to be actually a connected sum of graph manifolds.
As far as we know, our approach does not lead to this stronger statement, 
but we refer the reader to recent results by Mescher~\cite{mescher} in this direction.

%%%%%%%%%%%%%%%%%%%%%%%%%%%%%%%%%%%%%%%%%%%%%%%%%%%%%%%%%%%
% bib
 
\bibliographystyle{abbrv}
\bibliography{svbib}

\end{document}